
\documentclass[reqno,a4paper,10pt]{amsart}
\usepackage[utf8]{inputenc} 
\usepackage{amsmath, amssymb,bm, cases, mathtools, thmtools}
\usepackage{verbatim}
\usepackage{graphicx}\graphicspath{{figures/}}
\usepackage{multicol}
\usepackage{tabularx}
\usepackage[usenames,dvipsnames]{xcolor}
\usepackage{mathrsfs}

\usepackage[%
    minnames=1,maxnames=99,maxcitenames=3,
    style=alphabetic,
    doi=false,url=false,
    firstinits=true,hyperref,natbib,backend=bibtex]{biblatex}
\renewbibmacro{in:}{%
  \ifentrytype{article}{}{\printtext{\bibstring{in}\intitlepunct}}}
\bibliography{continuumofurns}

\usepackage[colorlinks,citecolor=blue,urlcolor=blue,linkcolor=RawSienna,hypertexnames=false]{hyperref}
\usepackage{hypernat}
\usepackage{datetime}
\usepackage{enumerate}

\DeclareMathAlphabet\EuRoman{U}{eur}{m}{n}
\SetMathAlphabet\EuRoman{bold}{U}{eur}{b}{n}
\newcommand{\eurom}{\EuRoman}

\usepackage{euscript,microtype}

\makeatletter
\let\reftagform@=\tagform@
\def\tagform@#1{\maketag@@@{\ignorespaces\textcolor{gray}{(#1)}\unskip\@@italiccorr}}
\renewcommand{\eqref}[1]{\textup{\reftagform@{\ref{#1}}}}
\makeatother

\setlength{\marginparwidth}{1.25in}

\newcommand{\LATER}[1]{\error}
\newcommand{\fLATER}[1]{\error}
\newcommand{\TBD}[1]{\error}
\newcommand{\fTBD}[1]{}
\newcommand{\PROBLEM}[1]{\error}
\newcommand{\fPROBLEM}[1]{}
\newcommand{\NA}[1]{#1}

\newcommand{\charbersym}[1]{\Psi_{#1}}
\newcommand{\charber}[2]{\charbersym {} \ooF{#1,\ #2}}

\def\[#1\]{\begin{align}#1\end{align}}
\newcommand{\defas}{\vcentcolon=}  
\newcommand{\given}{\mid}

\newcommand{\Reals}{\mathbb{R}}
\newcommand{\BorelSets}{\mathcal{B}}

\newcommand{\as}{\textrm{a.s.}}
\newcommand{\AS}{\ \,\as}

\newcommand{\downto}{\!\downarrow\!}
\newcommand{\upto}{\!\uparrow\!}
\newcommand{\ind}{\mathrel{\perp\mkern-9mu\perp}}

\newcommand{\dee}{\mathrm{d}}

\DeclareMathOperator{\supp}{supp}

\DeclareMathOperator*{\newlim}{\mathrm{lim}\vphantom{\mathrm{infsup}}}

\DeclareMathOperator*{\newinf}{\mathrm{inf}\vphantom{\mathrm{infsup}}}
\DeclareMathOperator*{\newsup}{\mathrm{sup}\vphantom{\mathrm{infsup}}}
\renewcommand{\lim}{\newlim}

\renewcommand{\inf}{\newinf}
\renewcommand{\sup}{\newsup}

\newcommand{\Measures}{\mathcal{M}}

\newcommand{\Nats}{\mathbb{N}}

\newcommand{\Ints}{\mathbb{Z}}
\newcommand{\NNInts}{\Ints_+}
\newcommand{\NNExtInts}{\overline{\Ints}_+}

\newcommand{\st}{\,:\,}
\renewcommand{\Pr}{\mathbb{P}}
\newcommand{\defn}[1]{\emph{#1}}

\newcommand{\ceiling}[1]{\lceil #1 \rceil}

\def\bone{\mathbf{1}}
\def\Ind{\bone}

\def\EE{\mathbb{E}}

\def\law{\mathcal{L}}

\newcommand{\cF}{\mathcal F}
\newcommand{\cG}{\mathcal G}

\newcommand{\Uniform}{\textrm U (0,1)}

\newcommand{\equaldist}{\overset{d}{=}}
\newcommand{\inv}{^{-1}}

\newcommand{\dist}{\ \sim\ }

\newcommand{\uspace}{\Xi}
\newcommand{\upoint}{x}
\newcommand{\usa}{\mathcal{F}}

\newcommand{\probspace}{(\uspace,\usa,\Pr)}

\newcommand{\bspace}{\Omega}
\newcommand{\bsa}{\mathcal A}
\newcommand{\borelspace}{(\bspace,\bsa)}
\newcommand{\bp}{s}
\newcommand{\bset}{A}
\newcommand{\bseta}{A'}

\newcommand{\pp}[1]{\tilde #1}

\newcommand{\NNReals}{\Reals_+}
\newcommand{\thelaw}{\!_{\!\mathcal{L}}}

\newcommand{\Beta}{\mathrm{Beta}}
\newcommand{\Bernoulli}{\mathrm{Bernoulli}}
\newcommand{\BPLAW}{\mathrm{BP}_{\thelaw}}
\newcommand{\BePLAW}{\mathrm{BeP_{\thelaw}}}

\newcommand{\distiid}{\overset{iid}{\dist}}

\newcommand{\card}[1]{\##1}
\newcommand{\iid}{i.i.d.}
\newcommand{\CombStruct}[1]{[\![#1]\!]}

\newcommand{\gprocess}[2]{(#1)_{#2}}
\newcommand{\nprocess}[3]{\gprocess{#1_{#3}}{#3 \in #2}}
\newcommand{\process}[2]{\nprocess{#1}{#2}n}

\newcommand{\Atoms}{\mathscr{A}}
\newcommand{\FixedAtoms}{\mathscr{A}_0}

\newcommand{\theset}[1]{\lbrace #1 \rbrace}

\newcommand{\BM}{H_0} 
\newcommand{\NABM}{\ord H_0}

\newcommand{\nUni}{K}
\newcommand{\nmul}{N}

\newcommand{\kernel}{\Bbbk}
\newcommand{\bkernel}{\overline{\kernel}}

\newcommand{\eppf}{\pi}
\newcommand{\EPPFfam}{\nprocess \pi \bspace s}

\newcommand{\of}[1]{\left ( #1 \right )}
\newcommand{\oof}[1]{( #1 )}
\newcommand{\ooof}[1]{\bigl ( #1 \bigr )}
\newcommand{\oooof}[1]{\Bigl ( #1 \Bigr )}

\newcommand{\oF}[1]{\left [ #1 \right ]}
\newcommand{\ooF}[1]{[ #1 ]}
\newcommand{\oooF}[1]{\bigl [ #1 \bigr ]}
\newcommand{\ooooF}[1]{\Bigl [ #1 \Bigr ]}
\newcommand{\oooooF}[1]{\biggl [ #1 \biggr ]}

\newcommand{\Of}[1]{\left \{ #1 \right \}}
\newcommand{\Oof}[1]{\{ #1 \}}
\newcommand{\Ooof}[1]{\bigl \{ #1 \bigr \}}

\newcommand{\Ooooof}[1]{\biggl \{ #1 \biggr \}}

\newcommand{\nalpha}[1]{\eurom{#1}}
\newcommand{\nX}{\nalpha X}
\newcommand{\nY}{\nalpha Y}
\newcommand{\nR}{\nalpha R}

\newcommand{\cQkernel}{\mathbb Q}
\newcommand{\cQ}[1]{\cQkernel_{#1}} 

\newcommand{\ginitial}[2]{\tau^{#2}_{#1}}
\newcommand{\initial}[1]{\ginitial {#1} {}}

\newcommand{\ftt}[2]{\ft^{#1}_{#2}}

\usepackage[capitalize]{cleveref}

\crefname{lem}{Lemma}{Lemmas}
\crefname{cor}{Corollary}{Corollaries}
\crefname{thm}{Theorem}{Theorems}
\crefname{assumption}{Assumption}{Assumptions}

\numberwithin{equation}{section}

\declaretheorem[style=plain,numberwithin=section,name=Theorem]{thm}
\declaretheorem[style=plain,sibling=thm,name=Lemma]{lem}
\declaretheorem[style=plain,sibling=thm,name=Proposition]{prop}
\declaretheorem[style=plain,sibling=thm,name=Corollary]{cor}

\declaretheorem[style=definition,sibling=thm,name=Definition]{definition}

\declaretheorem[style=remark,qed=$\triangleleft$,sibling=thm,name=Remark]{remark}

\numberwithin{thm}{section}

\usepackage{url}

\newcommand{\indexmk}[4]{#1_{#2,#3}^{#4}}
\newcommand{\indexm}[3]{#1_{#2,#3}}
\newcommand{\indexk}[3]{#1_{#2}^{#3}}
\newcommand{\ord}[1]{\tilde{#1}}
\newcommand{\countproc}[1]{\mathcal C(#1)}

\newcommand{\BorelSetsInt}{\BorelSets_{[0,1]}}

\newcommand{\PFF}[1]{\rho^{(#1)}}
\newcommand{\PF}[2]{\rho^{(#1,#2)}}
\newcommand{\Bin}{\mathrm{Bin}}

\def\pp{p}
\def\qq{q}

\newcommand{\Triangular}{Tetrahedral}
\newcommand{\triangular}{tetrahedral}

\newcommand{\gsd}[1]{\varsigma_{#1}}
\newcommand{\sdist}{\gsd{1}}

\newcommand{\nrm}{\mu}
\newcommand{\lbm}{\upsilon}
\newcommand{\lkernel}{\kappa}

\newcommand{\mdrm}{\nu}
\newcommand{\drm}{\tilde \mdrm}

\newcommand{\rcard}{\zeta}

\newcommand{\thekernel}{\Bbbk}

\newcommand{\Gorur}{G\"or\"ur}
\newcommand{\TG}{Teh and G\"or\"ur}

\newcommand{\conc}{\theta}
\newcommand{\disc}{\alpha}

\newcommand{\rH}{H}

\newcommand{\hazard}{hazard}
\newcommand{\bpmean}{mean}
\newcommand{\bermean}{\hazard\ measure}
\newcommand{\cupmean}{mean \bermean}
\newcommand{\nonrandomcomp}{nonrandom nonatomic}
\newcommand{\fixedcomp}{fixed-atomic}

\newcommand{\argdot}{\,\cdot\,}

\newcommand{\indicator}[2]{e_{#1,#2}}

\newcommand{\generalizedbetaprocess}{generalized beta process}

\newcommand{\drag}[1]{\hat {#1}}
\newcommand{\normalize}[1]{\overline{#1}}

\newcommand{\general}{lcscH}%
\newcommand{\COUS}{continuum-of-urns scheme}

\title
[The continuum-of-urns scheme]
{
The continuum-of-urns scheme, \\
generalized beta and Indian buffet processes, \\
and hierarchies thereof
}

\begin{document}

\author[D.~M.~Roy]{Daniel M.~Roy}
\address{University of Toronto}
\urladdr{http://danroy.org/}
\email{droy@utstat.toronto.edu}

\begin{abstract}
We describe the combinatorial stochastic process underlying a sequence of conditionally independent Bernoulli processes with a shared beta process \hazard\ measure.  As shown by Thibaux and Jordan~\citep{Thibaux2007}, in the special case when the underlying beta process has a constant concentration function and a finite and nonatomic mean, the combinatorial structure is that of the Indian buffet process (IBP) introduced by Griffiths and Ghahramani \citep{GG05}.  
By reinterpreting the beta process introduced by Hjort~\citep{MR1062708} as a measurable family of Dirichlet processes, we obtain a simple predictive rule for the general case, which can be thought of as a continuum of Blackwell--MacQueen urn schemes (or equivalently, one-parameter Hoppe urn schemes). The corresponding measurable family of Pitman--Yor processes leads to a continuum of two-parameter Hoppe urn schemes, whose ordinary component is the three-parameter IBP introduced by Teh and G\"or\"ur \citep{TehGor2009a}, which exhibits power-law behavior, as further studied by Broderick, Jordan, and Pitman~\citep{BJP12}.  The idea extends to arbitrary measurable families of exchangeable partition probability functions and gives rise to generalizations of the beta process with matching buffet processes.  Finally, in the same way that hierarchies of Dirichlet processes were given Chinese restaurant franchise representations by Teh, Jordan, Beal, and Blei~\citep{TehJorBea2006}, one can construct representations of sequences of Bernoulli processes directed by hierarchies of beta processes (and their generalizations) using the stochastic process we uncover.
\end{abstract}

\maketitle

\thispagestyle{empty}

\ \\

\begin{center}
\begin{minipage}{.80\linewidth}
\setcounter{tocdepth}{1}
\tableofcontents
\end{minipage}
\end{center}

\vfill

\begin{center}
{\footnotesize This draft is subject to change.}
\end{center}

\newpage

\section{Introduction}
\label{sec:intro}

Since the introduction of the Indian buffet process (IBP) by Griffiths and Ghahramani \citep{GG05,GG06} and the characterization of its relationship with beta and Bernoulli processes by Thibaux and Jordan \citep{Thibaux2007}, there has been a surge of work 
extending the IBP in one direction and further exploiting the theory of completely random measures in the other.  
Despite this attention, a characterization of an urn scheme corresponding to a hierarchy of beta processes has remained elusive, in part, because of the family of beta distributions is not self-conjugate.  By reinterpreting the beta process as a measurable family of Dirichlet processes, we obtain such an urn scheme, which we subsequently generalize by considering arbitrary random measures.
As the main example, the urn scheme arising from Pitman--Yor processes not only gives rise to the 
stable beta process and three-parameter IBP introduced by Teh and G\"or\"ur \citep{TehGor2009a}, but also 
gives rise to a canonical definition for a hierarchical stable beta process.

In this article, we study exchangeable sequences of random sets, their combinatorial structure, and their corresponding de~Finetti (mixing) measures.
Following \citep{BJP13}, we will refer to the combinatorial structure of a collection of finite sets as a \emph{feature allocation}.  Informally, a feature allocation is the Venn diagram adorned with counts for each component. 

It will be convenient to represent random subsets of a space $\bspace$ by random measures on $\bspace$.  
In particular, a so-called \emph{simple} point process $X$ of the form $X=\sum_{k \le \rcard} \delta_{\gamma_k}$, 
for some random element
$\rcard$ in $\NNExtInts \defas \NNInts \cup \{\infty\}$
and 
\as\ distinct random elements
$\gamma_1,\gamma_2,\dotsc$ in $\bspace$,
will be taken to represent the set $\theset{\gamma_k \st k \le \rcard }$ of its atoms.
We will assume that $\bspace$ is locally compact, second countable, and Hausdorff (abbreviated \emph{lcscH}).  Let $\bsa$ denote the $\sigma$-algebra of its Borel sets.
The corresponding $\sigma$-algebra on the space of measures on $\borelspace$ is that generated by the evaluation maps
$\pi_\bset : \mu \mapsto \mu \bset$, for $\bset \in \bsa$.  Alternatively, we may think of random subsets as random elements in the space of $\sigma$-finite subsets of $\bspace$, equipped with the $\sigma$-algebra generated by the maps $\bset \mapsto \card{ (\bseta \cap \bset )}$, for $\bseta \in \bsa$, where $\card \bset$ denotes the cardinality of the set $\bset$.
Recall that a random measure $\mu$ is said to be \defn{completely random} or, equivalently, have \defn{independent increments}, 
when the random variables $\mu(A_j)$ are independent for any disjoint collection $A_1,\dotsc,A_k$ of measurable subsets.\footnote{
We have adopted the framework for random measures, point processes, Poisson processes, and more generally, random measures with independent increments---also known as completely random measures---laid out by Kallenberg~\citep[][Chp.~12]{FMP2}.  
This background, as well as some results on beta and Bernoulli processes, two classes of completely random measures of particular interest in the first part of the article, are presented in \cref{sec:background}.}
By a \defn{hazard measure}, we will mean a $\sigma$-finite measure $\mu$ on $\borelspace$ such that $\mu \theset s \le 1$ for all $s \in \bspace$.

\subsection{A discrete model}
\label{sec:finmodel}

We begin with a simple model.  Fix a finite, purely-atomic \hazard\ measure $\BM$, let $\Atoms$ be the set of its atoms,
let $\Pi$ be a random partition of $\Nats \defas \theset {1,2,\dotsc}$, 
and let $\Pi^s$, for $s \in \Atoms$, be independent and identically-distributed (\iid)\ copies of $\Pi$.  
The partition $\Pi^s$ associated with each atom $s \in \Atoms$ is  
a random finite or countably-infinite collection $C^s_1,C^s_2,\dotsc$ of disjoint subsets of $\Nats$, called \emph{blocks}.
Let $U^s_n$, for $s \in \Atoms$ and $n \in \Nats$, be an \iid\ collection of uniformly-distributed random variables, independent also from the partitions.
Then consider the sequence of simple point processes $X_n$, for $n \in \Nats$, concentrated on $\Atoms$ and given by 
\[
X_n \theset s \defas 1(U^s_{k^s_n} \le \BM\theset s),\ \text{where $k^s_n$ satisfies $n \in C^s_{k^s_n}$.}
\]  
Informally, every block in the partition $\Pi^s$ ``decides'' independently with probability $\BM\theset s$ whether or not to ``take'' the atom $s \in \Atoms$.  Then the set $X_n$ has the atom $s$ if and only if the block containing $n$ in $\Pi^s$ has itself ``taken'' the atom.
As constructed, $\EE X_n = \BM$ and $X_n \theset s = X_m \theset s$ whenever $n,m$ are in the same block of $\Pi^s$.

We are interested in the law of $\process X \Nats$ under the additional assumption that the random partition $\Pi$ is \emph{exchangeable} in the sense that its distribution is invariant under every permutation of the underlying set $\Nats$.  More carefully, by a \defn{random partition of} $\Nats$ we mean a $\{0,1\}$-valued process $\Pi \defas \nprocess \Pi \Nats {i,j}$ such that the random set $\theset { (i,j) \in \Nats^2 \st \Pi_{i,j}= 1}$ is an equivalence relation on $\Nats$ with probability one.
We say that a random partition $\Pi$ is \defn{exchangeable} when, for all permutations $\sigma$ of $\Nats$,
\[
\nprocess \Pi \Nats {i,j} 
   \equaldist 
\gprocess {\Pi_{\sigma(i),\sigma(j)}} {i,j \in \Nats}.
\]
If $\Pi$ is an exchangeable, one can show that the sequence $\process X \Nats$ is itself exchangeable, and thus conditionally i.i.d.  In particular, there exists a completely-random, purely-atomic, \hazard\ measure $\rH$, concentrated on $\Atoms$ and given by 
\[\label{limitingdefn}
\rH(A) = \lim_{n \to \infty} \frac 1 n \sum_{j=1}^n X_n(A)\ \as, \quad A \in \bsa,
\]
such that, conditioned on $\rH$, the $X_n$ are completely random with mean $\rH$.  It follows that $\rH$ is the \as\ unique random measure with this property.  (We will say that $\rH$ \emph{directs} $\process X \Nats$.)  

The distribution of $\process X \Nats$ and the directing random hazard measure $\rH$ are measurable functions of $\BM$ and the distribution of the random partition $\Pi$.  As one example, if $\Pi$ is a random partition induced by a \emph{one-parameter Chinese restaurant process}~(CRP)~\citep{MR2245368}, then $\rH$ is the \fixedcomp\ component of a \emph{beta process} \citep{MR1062708,MR1714717,Thibaux2007} with mean $\BM$.
Write $\cQkernel_{\BM}$ for the distribution of $\process X \Nats$, where we have highlighted only its dependence on $\BM$.

\subsection{The continuum limit}

The main focus of this article is on the characterization of 
a sequence of random measures whose distribution can be obtained by the following limit construction:  
Let $\BM^1,\BM^2,\dotsc$ be a sequence of purely-atomic \hazard\ measures on $\borelspace$ converging \emph{strongly} to a $\sigma$-finite, though not necessarily purely atomic, measure $\BM$,  
and write $\cQkernel_{\BM}$ for the weak limit of the distributions $\cQkernel_{\BM^k}$ as $k \to \infty$.
We will call a sequence $\process X \Nats$ of random measures with distribution $\cQkernel_{\BM}$
a (homogeneous) \defn{\COUS} with \hazard\ measure $\BM$. 

For the remainder of the section, we present our main results characterizing the continuum limit.
In \cref{sec:oneparam}, we will give a direct construction of the \COUS\ in the special case where the random partition is that induced by a Chinese restaurant process.
In \cref{sec:cup}, we give a direct construction of a general nonhomogeneous \COUS\ without appealing to a limiting argument, where nonhomogeneity refers to the fact that the distribution of the random partition $\Pi$ is allowed to vary across $\bspace$.
In \cref{sec:climit}, we show that the weak continuum limit, outlined above, agrees with these constructions.

\begin{thm}\label{intromain}
Let $\process X \Nats$ be a \COUS\ with \hazard\ measure $\BM$.
Then $\process X \Nats$ is exchangeable, and thus conditionally i.i.d.  In particular, there exists an \as\ unique, random \hazard\ measure $\rH$, given by \cref{limitingdefn}, such that, conditioned on $\rH$, the $X_n$, for $n \in \Nats$, are \iid\ and completely random with mean $H$.
\end{thm}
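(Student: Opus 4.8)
The plan is to transfer the structure of the discrete scheme through the weak limit that defines $\cQ \BM$. First I would establish exchangeability. Each approximating scheme $\cQ{\BM^k}$ is exchangeable, since its driving partition $\Pi$ is (as recorded above), and for any permutation $\sigma$ of $\Nats$ fixing all but finitely many indices the coordinate map $T_\sigma$ permuting the entries of a sequence of measures is continuous in the product vague topology. As every $\cQ{\BM^k}$ is $T_\sigma$-invariant, the mapping theorem for weak convergence gives $\cQ \BM \circ T_\sigma\inv = \lim_k \cQ{\BM^k}\circ T_\sigma\inv = \lim_k \cQ{\BM^k} = \cQ \BM$, so the limit law $\cQ \BM$ is invariant under every finitely supported permutation of $\Nats$, i.e.\ exchangeable.

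Because $\bspace$ is \general, the space of locally finite measures on $\borelspace$ under the vague topology is Polish, so de~Finetti's theorem applies to the exchangeable sequence $\process X \Nats$: there is a random probability law $\xi$ on that space such that $X_1,X_2,\dotsc$ are conditionally \iid\ given $\xi$, each with law $\xi$. For a fixed $\bset \in \bsa$ of finite mean, the conditional strong law of large numbers applied to the conditionally \iid\ variables $\of{X_n(\bset)}_{n\in\Nats}$ yields $\frac1n\sum_{j=1}^n X_j(\bset) \to \EE\oF{X_1(\bset)\given \xi}$ \as; writing $\rH(\bset)$ for this limit recovers \cref{limitingdefn}, and a separability-and-monotone-class argument promotes these pointwise limits to an \as\ limit as $\sigma$-finite measures, exhibiting $\rH$ as the intensity measure of $\xi$. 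Uniqueness is then automatic, since any directing measure must agree with this \as\ limit, and $\rH$ is a \hazard\ measure because each $X_n$ is a simple point process, so $X_1\theset s \in \theset{0,1}$ forces $\rH\theset s = \EE\oF{X_1\theset s \given \xi} \le 1$.

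The substantive point is to strengthen ``conditionally \iid'' to ``conditionally completely random with mean $\rH$,'' i.e.\ to show that $\xi$ is \as\ the law of a Bernoulli process with \hazard\ measure $\rH$, hence a deterministic function of its own mean. This is the step I expect to be the main obstacle, since de~Finetti supplies independence only \emph{across} the index $n$ and says nothing about independence of the increments of a single $X_n$ over disjoint regions. The plan is to transfer complete randomness from the approximations through Laplace functionals against continuous, compactly supported test functions $g_1,\dotsc,g_m$ with pairwise disjoint supports: each map $\mu \mapsto \exp(-\int g_i\,\dee\mu)$ is bounded and vaguely continuous, so the \as\ convergence of the empirical measures $\frac1n\sum_{j=1}^n \delta_{X_j}$ to $\xi$ gives $\EE\oF{\exp(-\int g_i\,\dee X_1)\given \xi} = \lim_n \frac1n\sum_{j=1}^n \exp(-\int g_i\,\dee X_j)$ \as, and likewise for $g = \sum_i g_i$. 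Since in every discrete scheme the conditional Laplace functional factorizes over disjoint supports, one verifies that the defining identity $\EE\oF{\exp(-\int g\,\dee X_1)\given \xi} = \prod_i \EE\oF{\exp(-\int g_i\,\dee X_1)\given \xi}$ \as\ survives the weak limit; and as a completely random simple point process is determined by its mean, this simultaneously identifies $\xi$ with the Bernoulli-process law of \hazard\ measure $\rH$ and makes conditioning on $\rH$ equivalent to conditioning on $\xi$.

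Finally, I would note a cleaner route that sidesteps the delicate interchange of conditioning with the weak limit: granting the explicit, nonlimiting constructions of the \COUS\ developed in \cref{sec:oneparam,sec:cup} and their identification with $\cQ \BM$ in \cref{sec:climit}, conditional complete randomness is built into the construction, and the exchangeability and law-of-large-numbers arguments above are then needed only to pin down $\rH$ through \cref{limitingdefn} and to establish its \as\ uniqueness.
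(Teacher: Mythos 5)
Your proposal is correct in outline but follows a genuinely different route from the paper's. The paper does not extract the theorem from the weak limit directly: it first gives a nonlimiting construction of the \COUS\ (\cref{sec:cup}), computes the joint characteristic functional of $(X_1,\dotsc,X_n)$ explicitly (\cref{laplacefuncofjoint} with \cref{thiuthiutihui,aosnhtintihusu}), and matches it against the characteristic functional of an exchangeable sequence of Bernoulli processes directed by a completely random \hazard\ measure (\cref{exchjointlaw}); this yields \cref{maindefinetti}, of which \cref{intromain} is a corollary, and \cref{sec:climit} then shows the weak limit $\cQ{\BM}$ coincides with the law of the constructed process. Your closing remark correctly identifies this as the available shortcut. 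What the paper's route buys is that conditional complete randomness is an exact identity of Laplace functionals, with no limit interchange to justify; what your route buys is that it works directly from the definition of $\cQ{\BM}$ as a weak limit, without needing the full machinery of \cref{sec:cup}.

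Two steps in your version need more care than you give them. First, the crux: you cannot literally combine ``\as\ convergence of the empirical averages under $\cQ{\BM}$'' with ``factorization holds in each discrete scheme,'' because the factorization of the conditional Laplace functional is an \as\ identity between tail functionals, and such identities are not preserved by weak convergence of finite-dimensional marginals. The clean way to execute your plan is to encode it in moments: with $U \defas \EE[e^{-X_1 g}\given\xi]$ and $V \defas \prod_i \EE[e^{-X_1 g_i}\given\xi]$, each of $\EE[U^2]$, $\EE[UV]$, $\EE[V^2]$ equals the expectation of a bounded continuous function of finitely many coordinates (e.g.\ $\EE[UV]=\EE\ooF{e^{-X_1 g}\prod_i e^{-X_{1+i}g_i}}$ by conditional independence across the index $n$), so $\EE[(U-V)^2]$ passes to the limit and vanishes there because it vanishes under every $\cQ{\BM^k}$. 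Second, factorization over $C_c^+$ test functions with disjoint supports gives independence of increments only over sets that can be separated; promoting this to arbitrary disjoint Borel sets, as required to invoke \cref{thm:bernoullichar} and conclude that $\xi$ is \as\ a Bernoulli-process law determined by its mean $\rH$, needs a further dissection or monotone-class argument. Neither issue is fatal, but both must be filled in for the argument to stand on its own.
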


Let $\process X \Nats$ and $\rH$ be given as above.  We will say that $\rH$ \emph{directs} $\process X \Nats$ and will call such a random \hazard\ measure a (homogeneous) \defn{\generalizedbetaprocess}.  (Nonhomogeneous \generalizedbetaprocess{}es will arise as the random \hazard\ measures directing nonhomogeneous \COUS{s}.)  
Before we can characterize the law of such processes, we must introduce a few notions from the theory of exchangeable sequences.  (We will develop these concepts further in \cref{sec:esp}.)  

Let $\Pi = \theset{C_1,C_2,\dotsc}$ be an exchangeable partition of $\Nats$,
where $C_1$ is the block containing $1$ and $C_{k+1}$, for $k\in\Nats$, is the block that, when nonempty, contains the least integer not in $C_1 \cup \dotsm \cup C_k$. 
Let $[n] \defas \{1,\dotsc,n\}$ and let $N_{jn} \defas \card \of { [n] \cap C_j }$, for $j \in \Nats$, be the number of elements in block $C_j$ among $[n]$.
Then the limiting relative frequency of elements in block $C_k$, i.e., 
\[
P_k \defas \lim_{n \to \infty} \frac{N_{kn}}{n} \ \as,
\]
exists almost surely for every $k \in \Nats$.  
Let $\sdist$ be the \emph{structural distribution}, i.e., the distribution of the first size-biased pick $P_1$, 
let
\[
\Delta \defas \EE(1-\textstyle\sum_{n=1}^\infty P_n)
\]
be the expected limiting frequency of \emph{dust}, i.e., singleton blocks in $\Pi$, 
and let $\thekernel(\qq, \argdot)$ be the distribution of
\[
\sum_{n=1}^\infty P_n 1(U_n \le \qq) 
+ \oooof { 1 -  \sum_{n=1}^\infty P_n }\,\qq
,
\]
where $\process U \Nats$ is an independent \iid\ process of uniformly-distributed random variables (cf., the discrete model). %
We then have the following:

\begin{thm}\label{genbeta}
Let $\rH$ be the random \hazard\ measure directing a \COUS\ 
with \hazard\ measure $\BM$, and let $\FixedAtoms$ and $\NABM$ be the atoms and nonatomic part of $\BM$, respectively.
Then $\rH$ is completely random and can be written
\[\label{decomp1}
\rH = \Delta \NABM + \sum_{s \in \Atoms} \tilde \pp_s \delta_s + \sum_{(s,\pp) \in \eta} \!\! \pp\, \delta_s \AS
\]
where $\nprocess {\tilde \pp} {\Atoms} s$ is a process of independent random variables such that $\tilde \pp_s$ has distribution $\thekernel(\BM\theset s, \argdot)$, and $\eta$ is a Poisson process on $\bspace \times (0,1]$, independent from $\nprocess {\tilde \pp} {\Atoms} s$, with intensity measure
\[\label{levy}
(\dee s, \dee \pp) \mapsto \NABM(\dee s) \, \pp\inv \, \sdist(\dee \pp).
\]
\end{thm}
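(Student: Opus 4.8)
The plan is to read off the L\'evy--Khinchine data of $\rH$ from the discrete models. By \cref{intromain} the directing measure $\rH$ is already known to be completely random, so by the representation of completely random measures (Kallenberg~\citep{FMP2}) it suffices to identify three pieces of data: the deterministic part, the fixed atoms together with the laws of their masses, and the L\'evy intensity of the ordinary component. I would extract all three by computing the Laplace functional of $\rH$ as the limit of the Laplace functionals of the directing measures $\rH^k$ of the discrete models built from purely-atomic \hazard\ measures $\BM^k \to \BM$.

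First I would record the discrete-model formula. For finite purely-atomic $\BM^k$ with atom set $\Atoms^k$, evaluating the defining limit \cref{limitingdefn} at a single atom $s$ and splitting the partition $\Pi^s$ into its positive-frequency blocks and its dust gives
\[
\rH^k\theset s = \sum_{n} P^s_n\,1(U^s_n \le \BM^k\theset s) + \oooof{1 - \sum_n P^s_n}\,\BM^k\theset s ,
\]
the second term coming from a conditional strong law of large numbers applied to the independent uniforms attached to the singleton blocks. Thus $\rH^k\theset s$ has law $\thekernel(\BM^k\theset s,\argdot)$, and the masses are independent across $s$ because the $\Pi^s$ and $U^s_n$ are. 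Consequently $\rH^k=\sum_{s\in\Atoms^k}\tilde p^k_s\,\delta_s$ with independent $\tilde p^k_s\dist\thekernel(\BM^k\theset s,\argdot)$, and its Laplace functional factorizes: for bounded $f\ge 0$ of compact support,
\[
\log\EE\exp\of{-\rH^k(f)}=\sum_{s\in\Atoms^k}\log\int_0^1 e^{-f(s)p}\,\thekernel\of{\BM^k\theset s,\dee p}.
\]

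The heart of the argument is the small-mass asymptotics of the inner transform. Writing $\phi_\epsilon(\lambda)=\int_0^1 e^{-\lambda p}\,\thekernel(\epsilon,\dee p)$ and conditioning on the block frequencies,
\[
\phi_\epsilon(\lambda)=\EE\,\ooooof{e^{-\lambda\epsilon(1-\sum_n P_n)}\prod_n\of{1-\epsilon\of{1-e^{-\lambda P_n}}}},
\]
so that, to first order in $\epsilon$,
\[
\log\phi_\epsilon(\lambda)=-\epsilon\,\ooooF{\lambda\Delta+\EE\sum_n\of{1-e^{-\lambda P_n}}}+O(\epsilon^2).
\]
Here I would invoke the size-biasing relation $\sdist(\dee p)=p\,\EE\,\card{\theset{n\st P_n\in\dee p}}$ on $(0,1]$ together with $\sdist\theset 0=\Delta$; this rewrites $\EE\sum_n(1-e^{-\lambda P_n})$ as $\int_{(0,1]}(1-e^{-\lambda p})\,p^{-1}\sdist(\dee p)$ and exhibits the atom of $\sdist$ at $0$ as the source of the drift coefficient $\Delta$. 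Splitting $\Atoms^k$ into the atoms approximating the true atoms $\Atoms$ of $\BM$---whose masses converge, so $\thekernel(\BM^k\theset s,\argdot)\to\thekernel(\BM\theset s,\argdot)$ by continuity of $\thekernel$ in its first argument---and the vanishing-mass atoms approximating $\NABM$, and using that the empirical measure $\sum_i\epsilon_i\delta_{s_i}$ of the latter converges to $\NABM$, the second group contributes
\[
-\Delta\int f\,\dee\NABM\;-\;\int_\bspace\int_{(0,1]}\of{1-e^{-f(s)p}}\,p^{-1}\sdist(\dee p)\,\NABM(\dee s),
\]
which I recognize as the log-Laplace exponent of $\Delta\NABM$ plus a Poisson integral $\sum_{(s,p)\in\eta}p\,\delta_s$ with exactly the intensity \cref{levy}, while the first group contributes $\sum_{s\in\Atoms}\log\EE e^{-f(s)\tilde p_s}$ with $\tilde p_s\dist\thekernel(\BM\theset s,\argdot)$. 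Summing, the limiting log-Laplace functional is precisely that of the random measure in \cref{decomp1}, which by uniqueness of Laplace functionals identifies its law.

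The main obstacle, beyond the bookkeeping that under strong convergence separates the atomic approximations from those of $\NABM$, is twofold. Analytically, I must make the first-order expansion uniform: the $O(\epsilon^2)$ remainder has to be bounded uniformly in $\epsilon$ and in $\lambda=f(s)$ on compacts and then summed, which is controlled because $\sum_i\epsilon_i^2\le(\max_i\epsilon_i)\sum_i\epsilon_i\to 0$ and because the L\'evy integrability $\int_{(0,1]}(p\wedge 1)\,p^{-1}\sdist(\dee p)=\sdist((0,1])=1-\Delta<\infty$ tames the small jumps. Conceptually, I must justify that the Laplace functional of the directing measure $\rH$ of the weak limit $\cQkernel_\BM$ is indeed the limit of those of the $\rH^k$; I would supply this either through the continuity of the de~Finetti (directing-measure) map under weak convergence of exchangeable sequences, or---more robustly---by replacing the limiting argument with the direct construction of the nonhomogeneous \COUS\ in \cref{sec:cup} and its identification with the continuum limit in \cref{sec:climit}, reading \cref{decomp1} off that construction instead.
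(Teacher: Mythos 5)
Your proposal is correct in substance but follows a genuinely different route from the paper. The paper does not prove \cref{genbeta} by passing Laplace functionals of the discrete directing measures to the limit; instead it constructs the (nonhomogeneous) \COUS\ directly in \cref{sec:cup}, computes the joint characteristic functional of the sequence $\process X \Nats$ via the \triangular\ construction (\cref{altlaplacefuncofjoint,laplacefuncofjoint}), separately computes the joint characteristic functional of an exchangeable sequence of Bernoulli processes directed by an arbitrary completely random \hazard\ measure (\cref{exchjointlaw}), and identifies the two after setting $\nrm = \Delta\NABM$, $\lbm = \NABM$, and $\lkernel(s,\dee p) = p\inv\sdist(\dee p)$; \cref{genbeta} is then read off \cref{maindefinetti}, with \cref{sec:climit} supplying the equivalence with the limit definition used in the introduction. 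Your argument instead works one level up, with the directing measure itself: the factorized Laplace transform of $\rH^k$ over the atoms of $\BM^k$, the first-order expansion $\log\phi_\epsilon(\lambda) = -\epsilon\,[\,\lambda\Delta + \EE\sum_n(1-e^{-\lambda P_n})\,] + O(\epsilon^2)$, and the size-biasing identity (which is exactly \cref{theidentityB}, used by the paper only later in \cref{thm:altreps}) to convert $\EE\sum_n(1-e^{-\lambda P_n})$ into $\int_{(0,1]}(1-e^{-\lambda p})\,p\inv\sdist(\dee p)$. This is a clean and correct derivation of the \Levy\ data, and it has the virtue of making the intensity \cref{levy} and the drift coefficient $\Delta$ emerge transparently from the atom-mass asymptotics, directly addressing the introduction's limit-based definition. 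What it does not buy you is what the paper's route gets for free: matching the \emph{joint} law of $(\rH, X_1,\dotsc,X_n)$ simultaneously establishes the posterior structure of $\rH$ given $\cF_n$, which your computation of the marginal law of $\rH$ alone cannot.

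The one step you rightly flag as delicate --- interchanging the weak limit of the sequences with the passage to directing measures --- is the real cost of your route. Convergence in distribution of exchangeable sequences is equivalent to convergence in distribution of their de~Finetti measures, but here the directing object is the \bermean\ $\rH$ rather than the de~Finetti measure $\BePLAW(\rH)$ itself, so one still needs to extract convergence of $\rH^k$ from convergence of $\BePLAW(\rH^k)$ (or argue via $\rH = \lim_n n^{-1}\sum_{i\le n}X_i$ and a diagonal argument). Your fallback of routing through the direct construction of \cref{sec:cup} and the identification in \cref{sec:climit} is precisely how the paper closes this, so the gap is one of efficiency rather than correctness. Two smaller points worth making explicit if you write this up: (i) the claim that the masses of the atoms of $\BM^k$ lying off $\FixedAtoms$ tend to zero uniformly does follow from strong convergence (a countable set of such atoms with masses bounded below would violate $\BM^k(A)\to\BM(A)=0$ for the corresponding countable $A$), but it deserves a line; and (ii) once the limiting log-Laplace functional is identified, the almost-sure decomposition \cref{decomp1} with $\eta$ independent of $\nprocess{\tilde p}{\Atoms}{s}$ should be cited to \cref{thm:crmrep} rather than asserted, since your computation identifies only the law of $\rH$.
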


Following convention in the study of completely random measures, we will call the three components of $\rH$ appearing in \cref{decomp1} the \emph{\nonrandomcomp}, \emph{\fixedcomp}, and \emph{ordinary} components, respectively.  
When $\Delta < 1$ and $\NABM \neq 0$, the measure described by \cref{levy} is merely $\sigma$-finite and not finite.  In this case, the ordinary component has an infinite number of atoms with probability one.

The ordinary component $\ord \rH$ of the directing random hazard measure $\rH$ can be related to the \as\ limiting frequencies $\process P \Nats$ of the underlying random partition.
\begin{thm}\label{simplestickbc}
Let $A \in \bsa$ such that $\gamma \defas \NABM(A) < \infty$. Then
\[
\ord \rH( \argdot \cap A) = \sum_{t=1}^\infty \sum_{j=1}^{\rcard_t} P_{tj} \, \delta_{s_{tj}}  \ \as,
\]
for some independent processes
$\nprocess {\rcard} \Nats t$, $\nprocess P {\Nats^2} {tj}$, and $\nprocess s {\Nats^2} {tj}$ such that:
\begin{enumerate}
\item
$\nprocess {\rcard} \Nats t$ are \iid\ Poisson random variables with mean $\gamma$;

\item
$\nprocess {{P_t}} \Nats {j}$, for $t \in \Nats$, are independent collections of \iid\ random variables such that $P_{tj} \equaldist P_t$;

\item
and $\nprocess s {\Nats^2} {tj}$ are \iid\ random variables with distribution $\gamma\inv \NABM(\argdot \cap A)$.
\end{enumerate}
\end{thm}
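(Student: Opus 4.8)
The plan is to start from the Poisson representation of the ordinary component supplied by \cref{genbeta} and to reorganize its atoms according to the size-biased rank of the block that produced them. By \cref{genbeta}, the ordinary component satisfies $\ord \rH(\argdot \cap A) = \sum_{(s,\pp) \in \eta \,:\, s \in A} \pp\, \delta_s$, where $\eta$ is Poisson on $\bspace \times (0,1]$ with intensity $\NABM(\dee s)\, \pp\inv \sdist(\dee \pp)$. Restricting to $A$ and using $\NABM(A) = \gamma$, the $s$- and $\pp$-coordinates factor: the intensity is the product $\NABM(\argdot \cap A) \otimes [\pp\inv \sdist(\dee \pp)]$. Everything then reduces to decomposing the $\sigma$-finite measure $\pp\inv \sdist(\dee \pp)$ on $(0,1]$ into a countable sum indexed by size-biased rank.

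The key step is the identity
\[\label{sbid}
\pp\inv \sdist(\dee \pp) = \sum_{t=1}^\infty \nu_t(\dee \pp) \quad \text{on } (0,1],
\]
where $\nu_t$ denotes the law of the $t$-th size-biased frequency $P_t$ restricted to $(0,1]$. To see this, recall that the structural distribution is the law of the first size-biased pick, so $\sdist(\dee \pp) = \EE \ooF{ \sum_j P_j \, 1(P_j \in \dee \pp)}$ and hence $\pp\inv \sdist(\dee \pp) = \EE\ooF{ \sum_j 1(P_j \in \dee \pp) }$ is exactly the expected number of blocks with limiting frequency in $\dee \pp$. Since the size-biased ordering $(P_t)_t$ merely re-enumerates these same blocks, for every $f$ vanishing at $0$ one has $\EE[\sum_t f(P_t)] = \int_0^1 f(\pp)\, \pp\inv \sdist(\dee \pp)$, which is \eqref{sbid} in integrated form. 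This is where the theory of \cref{sec:esp} does the real work, and I expect it to be the main obstacle: one must justify that the size-biased permutation exhausts the block frequencies and correctly handle the dust (frequency-$0$) mass, which is precisely the mass $\sdist(\{0\}) = \Delta$ that \eqref{sbid} discards.

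Given \eqref{sbid}, the conclusion follows from the superposition and marking theorems for Poisson processes. Writing $\eta|_{A \times (0,1]}$ as an independent superposition $\sum_t \eta_t$, where $\eta_t$ is Poisson with intensity $\NABM(\argdot \cap A) \otimes \nu_t$, I realize each $\eta_t$ by first drawing a Poisson$(\gamma)$ number $\rcard_t$ of atoms and then attaching i.i.d.\ marks $(s_{tj}, P_{tj})$ with $s_{tj} \dist \gamma\inv \NABM(\argdot \cap A)$ and $P_{tj} \equaldist P_t$, the two coordinates independent because the intensity is a product. Extending the weight mark from $(0,1]$ to the full law of $P_t$ on $[0,1]$ makes the count exactly Poisson$(\gamma)$ rather than Poisson$(\gamma\,\Pr(P_t > 0))$; the extra atoms carry weight $P_{tj} = 0$ and are inert in $\sum_{tj} P_{tj}\,\delta_{s_{tj}}$, so this leaves the measure unchanged. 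Superposing over $t$ yields the claimed representation, with the three families independent as required. It remains only to verify that the double sum converges, which is immediate since its expected total mass is $\int_A \int_{(0,1]} \pp \cdot \pp\inv \sdist(\dee \pp)\, \NABM(\dee s) = \gamma\,(1-\Delta) < \infty$, so $\ord\rH(A)$ is a.s.\ finite and the representation is well defined.
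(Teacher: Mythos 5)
Your proof is correct. The identity you isolate as the crux, $\pp\inv\sdist(\dee\pp)=\sum_t\law(P_t)(\dee\pp)$ on $(0,1]$, is exactly the paper's \cref{theidentityB} (equivalently, $\sdist(\dee p)=p\,(\sum_k\gsd{k})(\dee p)$ on $(0,1]$), and your treatment of the dust atom at $0$ and of the inert zero-weight marks needed to make the counts exactly $\Poisson(\gamma)$ is the right bookkeeping. Where you differ from the paper is in how you reach the decomposition: you work purely distributionally, splitting the Poisson intensity and invoking superposition, marking, and a transfer argument. The paper instead obtains \cref{simplestickbc} as an immediate corollary of \cref{thm:altreps}, whose summands $\ord\rH^{k}$ are first \emph{constructed} as almost-sure limits of the partial averages $\frac1n\indexk{\ord S}{n}{k}$ of a triangular array built from the underlying urn schemes (\cref{theindplawethiu,charofindexk,limitingpoissonsupport}), and only then identified via the same L\'evy-measure computation. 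The paper explicitly acknowledges your shortcut in \cref{remarkaoeu}: the identity plus the calculus of completely random measures already yields the decomposition by transfer. What the longer route buys is structural information your argument does not provide --- each inner sum $\sum_j P_{tj}\delta_{s_{tj}}$ is realized as the limiting partial average attached to the $t$-th size-biased block of the continuum of urns, so the representation is tied to identifiable pieces of the generative process rather than existing only on an abstract extension of the probability space. For the theorem as literally stated (``for some independent processes''), your more elementary argument fully suffices.
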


At this point, we can draw out several connections with well-known stochastic processes.  More details are given in \cref{sec:cup,sec:twoparam}:  
If $\Pi$ is a partition induced by a one-parameter CRP with concentration parameter $\conc$, then $\rH$ is a \emph{beta process}~\citep{MR1062708} with mean $\BM$.  In particular, $\Delta=0$, and so there is no \nonrandomcomp\ component; $\thekernel(\qq,\argdot) = \Beta(\conc\, \qq, \conc\, (1-\qq))$; and $\sdist = \Beta(1,\conc)$, and so, $\eta$ has intensity
\[
(\dee s, \dee \pp) \mapsto \NABM(\dee s) \, \conc\, \pp\inv (1-\pp)^{\conc-1} \,\dee \pp.
\]
It can be shown that
\[\label{stickbreakingrep}
P_n = V_n \prod_{j =1}^{n-1} (1-V_j) \AS,
\]
where $\process V \Nats$ are \iid\ and ${V_1 \dist \Beta(1, \conc)}$.
Combined with \cref{simplestickbc}, 
we arrive at the so-called stick-breaking construction of the beta process given by
Paisley, Zaas, Woods, Ginsburg, and Carin~\citep{PZWGC2010}.

If, on the other hand, $\Pi$ is a partition induced by a two-parameter CRP~\citep{MR1481784,MR515721},
with concentration parameter $\conc$ and discount parameter $\disc$, then $\Delta=0$; $\thekernel(\qq,\argdot) = \law(\sum_{i=1}^\infty P_i T_i)$, where $\process P \Nats$ is a size-biased permutation of the two-parameter Poisson Dirichlet and $\process T \Nats$ is an independent collection of \iid\ $\Bernoulli(\qq)$ random variables; and $\sdist = \Beta(1-\disc,\conc+\disc)$, and so, $\eta$ has intensity 
\[
(\dee s, \dee \pp) \mapsto 
\NABM(\dee s) \, \frac{ \Gamma(c+1) } { \Gamma(1-\disc)\, \Gamma(\conc + \disc) } \, \pp^{-1-\disc} (1-\pp)^{\conc+\disc-1} \, \dee \pp.
\]
The ordinary component is thus a \emph{stable beta process}, as defined by Teh and G\"or\"ur~\citep{TehGor2009a}.  
The \as\ limiting frequencies $\process P \Nats$ satisfy \cref{stickbreakingrep} but for merely independent random variables $V_n$, for $n \in \Nats$, where  $V_n \dist \Beta(1-\disc,\conc+n \disc)$.  Therefore,
\cref{simplestickbc} recovers the stick-breaking construction of the stable beta process given by Broderick, Jordan, and Pitman~\citep{BJP12}.  
(These authors refer to the same process as a three-parameter beta process.)

Even though Teh and G\"or\"ur did not define a \fixedcomp\ component, 
in our opinion, it would be natural to use the term ``stable beta process'' in order to refer to the class of random measures $\rH$ arising from two-parameter CRPs in this way.  In \cref{ssec:hsbp}, we discuss the omission of a \fixedcomp\ component in~\cite{TehGor2009a}, and the fact that our definition for the \fixedcomp\ component differs from that proposed by
Broderick, Mackey, Paisley, and Jordan~\citep{BJP11}.

In the case of beta processes and stable beta processes, 
one can give a characterization of the ordinary component as a countably-infinite sum of completely random measures, each finitely supported and accounting for the atoms appearing for the first time at each stage $X_1, X_2, \dotsc$ in a \COUS.  These representations have proven useful in applications in part because they are extremely simple to generate and yield finite approximation bounds. 

These constructions can be extended to \generalizedbetaprocess{}es:
For every $n \in \Nats$, let $\PFF {n}(\pp) = (1-\pp)^n$.  
We can then write $\PFF {n} \sdist$ for the Borel measure on $[0,1]$ given by
\[
(\PFF {n} \sdist) (B) = \int_{B} (1-\pp)^n\, \sdist(\dee \pp), \quad B \in \BorelSetsInt,
\]
We can also give a combinatorial interpretation of this measure:
Let $\nUni_n$  be the number of blocks in the random partition $\Pi$ restricted to $[n]$.
The event $\theset {\nUni_n > \nUni_{n-1}}$ is the event that $Z_n$ is a new token, and, on this event, $P_{\nUni_n}$ is the \as\ limiting frequency of appearance of this new token in the remainder of the sequence.
\cref{charofmu} shows that 
\[
(\PFF {n-1} \sdist)(B) = \Pr \theset { P_{\nUni_n} \in B \wedge \nUni_n > \nUni_{n-1} }
\]
for every $n \in \Nats$ and $B \in \BorelSetsInt$.

The identity, $\pp\inv = \sum_{n =1}^\infty (1-\pp)^{n-1}$, for $\pp \in (0,1)$,
yields the following construction of the ordinary component of generalized beta processes:
\begin{thm}\label{superposthm}
Let $\rH$ be the random \hazard\ measure directing a \COUS, and let $\eta$ be the Poisson process underlying the ordinary component as in \cref{genbeta}.   
Then
\[\label{superpos}
\sum_{(s,\pp) \in \eta} \!\!\! \pp\, \delta_s = \sum_{n=1 \vphantom{(}}^\infty \sum_{(s,\pp) \in \eta_n} \!\!\! \pp\, \delta_s \quad \as
\]
for some collection $\eta_n$, for $n \in \Nats$, of independent Poisson processes on $\bspace \times (0,1]$ with intensities
\[
(\EE \eta_n) (\dee s, \dee \pp) \mapsto \NABM(\dee s) \, \PFF {{n}-1} \sdist (\dee \pp).
\]
Note that for every $A \in \bsa$ such that $\NABM(A) < \infty$, we have $\eta_n(A\times(0,1]) < \infty$ a.s.
\end{thm}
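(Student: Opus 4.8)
The plan is to use the key analytic identity $\pp\inv = \sum_{n=1}^\infty (1-\pp)^{n-1}$, valid for $\pp \in (0,1)$, to decompose the Lévy intensity \eqref{levy} of the Poisson process $\eta$ into a countable sum of intensities. Recall from \cref{genbeta} that $\eta$ is a Poisson process on $\bspace \times (0,1]$ with intensity $(\dee s, \dee \pp) \mapsto \NABM(\dee s)\, \pp\inv\, \sdist(\dee \pp)$. Substituting the series for $\pp\inv$, and using the definition $\PFF{n-1}(\pp) = (1-\pp)^{n-1}$ together with the definition of the pushforward-type measure $\PFF{n-1}\sdist$, we get
\[\label{proofplanlevysum}
\NABM(\dee s)\, \pp\inv\, \sdist(\dee \pp) = \sum_{n=1}^\infty \NABM(\dee s)\, (1-\pp)^{n-1}\, \sdist(\dee \pp) = \sum_{n=1}^\infty \NABM(\dee s)\, \PFF{n-1}\sdist(\dee \pp).
\]
The right-hand side is exactly $\sum_n \EE\eta_n$ for the claimed intensities, so the intensity measure of $\eta$ is the countable sum of the intensities of the $\eta_n$.

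The second step is to invoke the superposition theorem for Poisson processes: if one has a Poisson process whose intensity measure is a countable sum $\sum_n \lambda_n$ of $\sigma$-finite measures, then it is equal in distribution to the superposition $\sum_n \eta_n$ of independent Poisson processes $\eta_n$ with intensities $\lambda_n$. To get the stronger almost-sure statement \eqref{superpos} rather than mere equality in distribution, I would construct the $\eta_n$ directly from $\eta$ by a measurable marking: assign to each point $(s,\pp) \in \eta$ an independent label $n \in \Nats$ drawn with probability proportional to $(1-\pp)^{n-1}$ (normalized by $\pp\inv$), and let $\eta_n$ collect the points with label $n$. By the marking theorem, the resulting $\eta_n$ are independent Poisson processes with the stated intensities, and by construction every point of $\eta$ lies in exactly one $\eta_n$, which yields the pointwise identity of sums in \eqref{superpos} almost surely.

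The final step is the $\sigma$-finiteness remark. Fix $A \in \bsa$ with $\NABM(A) < \infty$. The total intensity placed by $\eta_n$ on $A \times (0,1]$ is $\NABM(A) \int_{(0,1]} (1-\pp)^{n-1}\, \sdist(\dee \pp) \le \NABM(A) < \infty$, since $\sdist$ is a probability measure on $[0,1]$ and $(1-\pp)^{n-1} \le 1$. Hence $\EE[\eta_n(A \times (0,1])] < \infty$, so $\eta_n(A \times (0,1]) < \infty$ almost surely, establishing the closing claim. It also confirms each $\eta_n$ is well-defined as a genuine Poisson process (locally finite in the relevant sense) whenever $\NABM$ is $\sigma$-finite.

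The main obstacle I anticipate is the measure-theoretic bookkeeping in the marking step: one must verify that the selection kernel $(s,\pp) \mapsto \bigl( \pp\, (1-\pp)^{n-1} \bigr)_{n \in \Nats}$ is a genuine probability kernel (the weights sum to $\pp \cdot \pp\inv = 1$ for each fixed $\pp$) and measurable, and then apply Kallenberg's marking/thinning theorem in a form that delivers both independence of the $\eta_n$ and their correct intensities. The interchange of summation and integration implicit in \eqref{proofplanlevysum} is justified by monotone convergence since all terms are nonnegative, so that part is routine; the care is entirely in phrasing the construction so that the almost-sure superposition in \eqref{superpos}, and not just equality in law, comes out cleanly.
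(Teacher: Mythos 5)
Your proof is correct, but it takes a genuinely different route from the paper's. You decompose the L\'evy intensity via the identity $\pp\inv = \sum_{n\ge1}(1-\pp)^{n-1}$ and then realize the decomposition almost surely by marking each point of $\eta$ with an independent $\mathrm{Geometric}(\pp)$ label; the marking theorem delivers independence, the correct intensities, and the pointwise superposition in one stroke, and your finiteness bound is fine. The paper instead obtains $\eta_n$ intrinsically from the urn scheme: $\eta_n$ is identified with the set of atoms first appearing at stage $n$ of the sequence $\process X \Nats$, via the \triangular\ array and the limiting partial averages $\ord\rH_m = \lim_N \frac 1N \indexm{\ord S}{N}{m}$; independence is proved through characteristic functionals of the array columns (\cref{theindplawethiu}), the Poisson structure and intensities through \cref{limitingpoissonsupport}, and the a.s.\ identity through \cref{thm:altreps} (which also invokes the same geometric-series identity, \cref{theidentityA}, to match L\'evy measures). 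The paper explicitly acknowledges your route in \cref{remarkaoeu} and notes its cost: the $\eta_n$ produced by external randomization are not measurable with respect to $\process X \Nats$, whereas the paper's construction ties $\eta_n$ to the combinatorics of token arrivals, which is what makes the size-biased and stick-breaking representations drop out and allows the truncation bound of \cref{introapprox} to hold conditionally on $\cF_n$. Your approach buys brevity and self-containedness at the price of that probabilistic interpretation. One small point of care: since $\sdist$ may place mass $\Delta$ at $0$, the intensities $\NABM \otimes \PFF{n-1}\sdist$ should be understood as restricted to $\bspace\times(0,1]$, exactly as the paper does with the factor $1_{(0,1]}$ in the remark following \cref{limitingpoissonsupport}; this does not affect your argument.
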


The following result gives approximation error bounds when the above construction is truncated at a finite stage:
\begin{thm}\label{introapprox}
Assume $\gamma \defas \NABM(\bspace) < \infty$, 
and let $\rH$ be the random \hazard\ measure directing a \COUS\ $\process X \Nats$.
Let 
\[
\hat \rH \defas \Delta \NABM + \sum_{m=1 \vphantom{(}}^{k-1} \sum_{(s,\pp) \in \eta_m} \!\!\! \pp\, \delta_s 
\]
be the finite truncation of $\rH$, i.e., the sum of only the first $k-1$ terms of the right hand side of \cref{superpos},
and let $\hat X_1$ be the restriction of $X_1$ to the complement of the support of $\rH - \hat \rH$.
Then the expected total mass of the ordinary component of $\rH-\hat \rH$, and equivalently, an upper bound on the probability that $\hat X_1 \neq X_1$, is
$\gamma \, (\PFF {k-1} \sdist)(0,1]$.
\end{thm}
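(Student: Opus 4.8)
The plan is to reduce the statement to a single first-moment computation built on the superposition of \cref{superposthm}, and then to obtain the probability bound from an elementary Markov-type inequality.

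First I would identify the ordinary component of the truncation error. Since $\hat\rH$ retains the nonrandom part $\Delta\NABM$ together with the first $k-1$ Poisson processes $\eta_1,\dots,\eta_{k-1}$ appearing in the superposition of \cref{superposthm}, the ordinary component of $\rH-\hat\rH$ is exactly the tail superposition $\sum_{m\geq k}\sum_{(s,\pp)\in\eta_m}\pp\,\delta_s$. Its total mass is the nonnegative double sum $\sum_{m\geq k}\sum_{(s,\pp)\in\eta_m}\pp$, so by Tonelli and Campbell's formula applied to each independent Poisson process $\eta_m$ (whose mean measure is $\NABM(\dee s)\,\PFF{m-1}\sdist(\dee\pp)$),
\begin{align*}
\EE\left[\sum_{m\geq k}\sum_{(s,\pp)\in\eta_m}\pp\right]
&= \sum_{m\geq k}\int_{\bspace\times(0,1]}\pp\,\NABM(\dee s)\,\PFF{m-1}\sdist(\dee\pp)\\
&= \gamma\sum_{m\geq k}\int_{(0,1]}\pp\,(1-\pp)^{m-1}\,\sdist(\dee\pp),
\end{align*}
where I have used $\NABM(\bspace)=\gamma$ and $\PFF{m-1}(\pp)=(1-\pp)^{m-1}$. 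Summing the geometric series $\sum_{m\geq k}(1-\pp)^{m-1}=(1-\pp)^{k-1}/\pp$ under the integral (again legitimate by Tonelli, all integrands being nonnegative) cancels the leading factor $\pp$ and leaves $\gamma\int_{(0,1]}(1-\pp)^{k-1}\sdist(\dee\pp)=\gamma\,(\PFF{k-1}\sdist)(0,1]$, the claimed expected total mass.

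For the probability estimate I would use that, conditioned on $\rH$, the process $X_1$ is completely random with mean $\rH$ (\cref{intromain}); in particular it selects each atom $(s,\pp)$ of $\rH$ independently with probability $\pp$, i.e.\ $X_1\theset s\dist\Bernoulli(\pp)$. Because $\hat X_1$ is the restriction of $X_1$ to the complement of $\supp(\rH-\hat\rH)$, the event $\theset{\hat X_1\neq X_1}$ is precisely the event that $X_1$ selects at least one atom lying in the support of the ordinary component of the error $\rH-\hat\rH$. Letting $N$ count those selected atoms and conditioning first on the processes $\eta_m$, $m\geq k$, each atom $(s,\pp)$ is selected with probability $\pp$, so $\EE[N\mid(\eta_m)_{m\geq k}]=\sum_{m\geq k}\sum_{(s,\pp)\in\eta_m}\pp$; taking expectations recovers exactly the total mass computed above. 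Since $N$ is a nonnegative integer-valued random variable, $\Pr(\hat X_1\neq X_1)=\Pr(N\geq 1)\leq\EE N=\gamma\,(\PFF{k-1}\sdist)(0,1]$, which is the asserted bound.

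The computation itself is routine; the care lies entirely in the two identifications. The first is that the conditional $\Bernoulli$ selection structure of $X_1$ given $\rH$ makes $\EE[N\mid\eta]$ coincide with the error mass, which rests on the conditional complete randomness of $X_1$ from \cref{intromain} and on $\rH$ being a hazard measure. The second, and the only genuinely delicate point, is pinning down that $\theset{\hat X_1\neq X_1}$ coincides with the event that $X_1$ selects an atom removed by the truncation; this follows from the definition of $\hat X_1$ as the restriction of $X_1$ to the complement of $\supp(\rH-\hat\rH)$, together with the fact that the deterministic component $\Delta\NABM$ is carried by $\hat\rH$ and so contributes no atoms to the difference. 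The interchanges of summation, integration, and expectation are immediate from Tonelli since every quantity in sight is nonnegative.
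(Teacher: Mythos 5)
Your proof is correct and takes essentially the same route as the paper's: a first-moment computation of the tail mass via the intensities $\NABM(\dee s)\,\PFF{m-1}\sdist(\dee \pp)$ and the geometric identity $\sum_{m\ge k}\pp\,(1-\pp)^{m-1}=(1-\pp)^{k-1}$, followed by Markov's inequality applied to the number of atoms of $X_1$ falling in the removed support (the paper phrases this as $\Pr\{(X-\hat X)(\bspace)\ge 1\}\le\EE(X-\hat X)(\bspace)=\EE(\rH-\hat\rH)(\bspace)$ via the chain rule of conditional expectation). Your additional care about the nonatomic component $\Delta\NABM$ contributing nothing to the removed support is a point the paper leaves implicit.
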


\newcommand{\Histories}{\mathcal H}

\subsection{The underlying combinatorial stochastic process}

In applications to latent feature models and the theory of exchangeable feature allocations, the combinatorial structure of a \COUS\ is of primary interest.
Let $\process X \Nats$ be a homogeneous \COUS.
For $n \in \Nats$ and $h \in \Histories_n \defas \theset{0,1}^n \setminus \theset {0^n}$, let $s(h) \defas \sum_j h(j)$ denote the number of nonzero entries and let $M_h$ be the number of elements $s$ such that $(\forall j \le n)\ X_j \theset s = h(j)$.  
For every (Borel) automorphism $\phi$ on $\bspace$, we can define the transformed processes $X'_n \defas X_n \circ \phi^{-1}$, for $n \in \Nats$, where each atom $s$ is repositioned to $\phi(s)$.  Note that the counts $M_h$, for $h \in \Histories_n$ are invariant to this transformation, and it is in this sense that they capture only the combinatorial structure.
Let $\CombStruct{ (X_1,\dotsc,X_n) }$ denote $\theset{M_h \st h \in \Histories_n}$.
In \cref{sec:combstruct}, we prove the following:

\begin{thm}\label{introcombstruct}
Let $\BM$ be nonatomic and finite, let 
$\gamma \defas \BM(\bspace)  < \infty$. Then
\[
\begin{split}
&\Pr \{ \CombStruct{ (X_1,\dotsc,X_n)} = (m_h \st h \in \Histories_n) \} 
\\&\qquad = 
\gamma^{\sum_{h \in \Histories_n} \! m_h } \, 
  \exp \oooooF { - \gamma \sum_{j=1}^n f(j,1) } 
 \prod_{h \in \Histories_n} \frac { f \ooof {n,s(h) }^{m_h} } {(m_h)! }
\end{split}
\]
where
\[\label{fnkdef}
f(n,k) \defas \int_{[0,1]} \pp^{k-1} (1-\pp)^{n-k} \, \sdist(\dee \pp), \qquad \text{ for $k \le n \in \Nats$.}
\]
\end{thm}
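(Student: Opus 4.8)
The plan is to recognize the product form of the claimed expression as the joint law of a family of independent Poisson random variables indexed by histories, and then to identify the exponential prefactor through a binomial-to-geometric identity. The whole argument rests on a marking computation for the Poisson process underlying the ordinary component, together with a separate Poisson count coming from the continuous (dust) component.

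First I would specialize \cref{genbeta} to the nonatomic case: since $\BM$ is nonatomic, $\FixedAtoms=\emptyset$ and $\NABM=\BM$, so the directing hazard measure reduces to $\rH=\Delta\,\BM+\ord\rH$, where $\ord\rH=\sum_{(s,\pp)\in\eta}\pp\,\delta_s$ and $\eta$ is a Poisson process on $\bspace\times(0,1]$ with intensity $\BM(\dee s)\,\pp\inv\,\sdist(\dee\pp)$. By \cref{intromain}, conditioned on $\rH$ the observations $X_1,\dots,X_n$ are \iid\ Bernoulli processes with mean $\rH$, and I would split each into its contribution on the atoms of $\ord\rH$ and its contribution on the continuous measure $\Delta\BM$.

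The structural content is to compute the law of $(M_h)_{h\in\Histories_n}$ through two independent marking arguments. For the atoms of $\ord\rH$: conditioned on $\rH$, the vector $(X_1\{s\},\dots,X_n\{s\})$ attached to an atom $(s,\pp)\in\eta$ is a string of \iid\ $\Bernoulli(\pp)$ trials, hence equals a prescribed history $h$ with probability $\pp^{s(h)}(1-\pp)^{n-s(h)}$. Marking each point of $\eta$ by its realized history and invoking the marking theorem produces a Poisson process on $\bspace\times(0,1]\times\Histories_n$ (together with the discarded null history $0^n$), so the counts $M_h^{\circ}$ are independent and Poisson with $\EE M_h^{\circ}=\gamma\int_{(0,1]}\pp^{s(h)-1}(1-\pp)^{n-s(h)}\,\sdist(\dee\pp)$, where $\gamma=\BM(\bspace)$; for $s(h)\ge 2$ this equals exactly $\gamma f(n,s(h))$. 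For the continuous part, since $\Delta\BM$ is deterministic and nonatomic, the atoms that $X_j$ places in its support form a $\Poisson(\Delta\BM)$ process, independent across $j$, with almost surely distinct locations (and distinct from the atoms of $\ord\rH$); each therefore carries a singleton history, contributing an independent $\Poisson(\Delta\gamma)$ count to the $M_h$ with $s(h)=1$. Using that $\sdist$ assigns mass $\Delta$ to $\{0\}$ (the dust, developed in \cref{sec:esp}), the two singleton contributions combine as $\gamma(f(n,1)-\Delta)+\Delta\gamma=\gamma f(n,1)$, so in every case $(M_h)_{h\in\Histories_n}$ are independent with $M_h\dist\Poisson(\gamma f(n,s(h)))$.

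Finally I would multiply the individual Poisson mass functions, obtaining $\exp(-\gamma\sum_h f(n,s(h)))\,\gamma^{\sum_h m_h}\prod_h f(n,s(h))^{m_h}/(m_h)!$, and reduce the exponent via $\sum_{h\in\Histories_n}f(n,s(h))=\sum_{k=1}^n\binom{n}{k}f(n,k)=\sum_{j=1}^n f(j,1)$; the middle step groups histories by $s(h)=k$, and the last step follows from $\sum_{k=1}^n\binom{n}{k}\pp^{k-1}(1-\pp)^{n-k}=(1-(1-\pp)^n)/\pp=\sum_{j=1}^n(1-\pp)^{j-1}$ integrated against $\sdist$. I expect the main obstacle to be the careful bookkeeping of the dust, namely verifying that the mass of $\sdist$ at $0$ is exactly what is removed from the ordinary singleton means and reinstated by the continuous component, so that the singleton means collapse to the clean value $\gamma f(n,1)$; the independence and Poisson statements then follow from standard marking and superposition properties, and the exponent identity is routine.
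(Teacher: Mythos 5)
Your proof is correct, but it takes a genuinely different route from the paper's. The paper obtains \cref{introcombstruct} as a corollary of \cref{cupibp}, which is proved by induction on $n$ using the sequential predictive structure supplied by \cref{maindefinetti}: given $\CombStruct{X_{[n]}}$, the count $M_{0^n1}$ of new atoms is Poisson with mean $\gamma\,\Delta_{n+1}$ and each $M_{h1}$ is a conditionally independent binomial thinning of $M_h$; the resulting product telescopes into the EPPF probabilities $\Pr_{\eppf}\{N_{1k}=k \wedge N_{1n}=k\}$, which are then identified with $f(n,k)$. You instead argue ``statically'' from the L\'evy-measure description of the directing measure in \cref{genbeta}: marking each point of the Poisson process $\eta$ with its realized history and superposing the dust contribution shows directly that the $M_h$ are independent $\Poisson(\gamma f(n,s(h)))$ variables, after which the stated pmf is a product of Poisson masses together with one binomial identity for the exponent. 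Your route makes the independent-Poisson structure of the feature counts---and the exact bookkeeping of the dust via $\sdist\{0\}=\Delta$, which cancels between the ordinary singleton means $\gamma(f(n,1)-\Delta)$ and the continuous component---completely transparent, at the cost of invoking the full decomposition of \cref{genbeta} up front; the paper's induction needs only the one-step posterior predictive and produces along the way the general $\eppf$-IBP sampling rule, which is what one wants for sequential simulation. Both arguments ultimately rest on \cref{maindefinetti}, and your exponent identity $\sum_{k=1}^{n}\binom{n}{k} f(n,k)=\sum_{j=1}^{n} f(j,1)$ is the integrated form of $\pp\inv(1-(1-\pp)^n)=\sum_{j=1}^{n}(1-\pp)^{j-1}$, the same geometric identity the paper exploits in \cref{superposthm}.
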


\begin{remark}
The following identities relate $f(n,k)$ to combinatorial events in the underlying exchangeable partition:
Let $h \in \Histories_n$ such that $s(h)=k$,
let $\Pi_n$ be the restriction of $\Pi$ to $[n]$,
and 
recall the definition of $N_{jn}$ above.
Then, by exchangeability,
\[
f(n,k) 
&= \Pr \theset { h\inv (1) \in \Pi_n } \\
&= \Pr \theset { N_{1k} = k \wedge N_{1n}=k } = \Pr \theset { N_{ {\nUni_{n-k+1}}n} = k }  \\
&= { n-1 \choose k-1 }\inv \, \Pr \theset { N_{1n} = k } = { n-1 \choose k-1 }\inv \, \Pr \theset { N_{\nUni_n n} = k }.
\]
These identities may be simpler to work with than \cref{fnkdef}.
\end{remark}

It is worth pausing to highlight the connection with the IBP:
If $\Pi$ is a partition induced by a one-parameter CRP with concentration parameter $\conc$, then 
$\sdist = \Beta(1,\conc)$, and so, $f(n,k) = c \frac { \Gamma (k) \Gamma( c+n-k )} {\Gamma(c+n)}$.  The resulting p.m.f.\ is then precisely that of the two-parameter IBP \citep{GGS2007}, with concentration $\conc$ and mass $\gamma$.  Taking $\conc=1$, one recovers the original IBP, proposed by \citep{GG05,GG06}.
If, on the other hand, $\Pi$ is a partition induced by a two-parameter CRP, 
with concentration parameter $\conc$ and discount parameter $\disc$, one recovers the three-parameter IBP proposed by 
Teh and G\"or\"ur \citep{TehGor2009a}. 

\bigskip

The organization of the remainder of the article is as follows: 
In \cref{sec:oneparam}, we define a one-parameter scheme and show that it is an exchangeable sequence of Bernoulli processes directed by a beta process.  It follows that its combinatorial structure is an IBP.  But more importantly, the combinatorial structure of a hierarchy of one-parameter schemes, which corresponds to a hierarchy of beta processes, is seen to be the missing hierarchical version of the IBP.
In \cref{sec:esp}, we introduce some necessary preliminaries on exchangeable sequences and their directing random measures.
In \cref{sec:cup}, we define the \COUS\ with respect to a measurable family of EPPFs, show that the resulting sequence of simple point processes is exchangeable, and indeed corresponds with an exchangeable sequence of Bernoulli processes directed by a generalization of the beta process, whose ordinary, \fixedcomp\ and \nonrandomcomp\ (due to dust) components we characterize in terms of the EPPFs.
We end the section by describing the IBP analog.
In  \cref{sec:twoparam}, we consider the EPPF corresponding with the two-parameter Chinese restaurant process, producing a two-parameter \COUS\ that we show corresponds to an exchangeable sequence of Bernoulli processes directed by a generalization of the stable-beta process.   The combinatorial process is shown to be the three-parameter IBP introduced by Teh and G\"or\"ur.
Finally, in \cref{sec:climit}, we return to the limiting construction alluded to in this introduction, and show that a general \COUS\ can be obtained as a weak limit of finite processes.

\newcommand{\Levy}{L\'evy}
\section{Preliminaries} %
\label{sec:background}

In this section we very briefly review some definitions and results from the theory of completely random measures; define beta and Bernoulli processes; and develop a few additional properties of Bernoulli processes.

\subsection{Random measures; point processes; Poisson processes}

We fix a basic probability space $\probspace$ which one can assume to be the unit interval, equipped with the $\sigma$-algebra of Lebesgue-measurable sets, under Lebesgue measure.
The following setup is (essentially) taken from Kallenberg \citep[][Chp.~12]{FMP2}.  We reproduce it here 
(indented, with several small modifications) 
for completeness.

\newenvironment{borrowedtext}{\list{}{\leftmargin=2.5mm\rightmargin=2.5mm}\item[]}{\endlist}
\begin{borrowedtext}
Consider an arbitrary measurable space $\borelspace$:
we say that $\xi$ is a \defn{random measure} on $\borelspace$ when it is a $\sigma$-finite kernel from the basic probability space $\probspace$ into $\bspace$, 
i.e., $\xi$ is a map from $\uspace \times \bsa$ to $[0,1]$ such that:
\begin{enumerate}
\item $\xi(\upoint,\argdot)$ is a measure for all $\upoint \in \uspace$;
\item $\xi A \defas \xi(\argdot,A)$ is a random variable for all $A \in \bsa$; and
\item for some partition $A_1,A_2,\dotsc \in \bsa$ of $\bspace$, $\xi A_k < \infty$ \as\ for all $k$.
\end{enumerate}
(Note that we require a single partition to be witness to the $\sigma$-finiteness of $\xi$.)
We may consider $\xi$ to be a random element in the space $\Measures\borelspace$ of $\sigma$-finite measures on $\borelspace$, equipped with the $\sigma$-algebra generated by the projection maps $\pi_{A} \st \mu \mapsto \mu(A)$ for all $A \in \bsa$.
We define the \defn{intensity} of $\xi$ to be the measure $\EE\xi$ given by $(\EE\xi) A = \EE(\xi A)$, for $A \in \bsa$.

We say that $\xi$ is a \defn{point process} when it is an integer-valued random measure, i.e., when $\xi A$ is a $\NNExtInts$-valued random variable for every $A \in \bsa$. 
Alternatively, we may think of $\xi$ as a random element in the space of all $\sigma$-finite, integer-valued measures on $\bspace$.  When $\bspace$ is Borel (e.g., when $\bspace$ is lcscH), we may write $\xi = \sum_{k \le \rcard} \delta_{\gamma_k}$ for some random elements $\gamma_1,\gamma_2,\dotsc$ in $\bspace$ and $\rcard$ in $\NNExtInts$, and we note that $\xi$ is \defn{simple} iff the $\gamma_k$ with $k \le \rcard$ are distinct.  (We will say that a measurable space $\borelspace$ is \defn{Borel} if there exists a measurable bijection $\phi$ from $\borelspace$ onto a Borel subset of $\Reals$, whose inverse is also measurable.)  In general we may eliminate the possible multiplicities in $\gamma_1,\gamma_2,\dotsc$ to create a simple point process $\xi^*$, which agrees with the counting measure on the support of $\xi$.  By construction it is clear that $\xi^*$ is a measurable function of $\xi$.

A random measure $\xi$ on a measurable space $\borelspace$ is said to have \defn{independent increments} if the random variables $\xi \bset_1, \dotsc, \xi \bset_n$ are independent for any disjoint sets $\bset_1,\dotsc,\bset_n \in \bsa$.  By a \defn{Poisson process} on $\bspace$ with intensity measure $\mu \in \Measures\borelspace$ we mean a point process $\xi$ on $\bspace$ with independent increments such that $\xi \bset$ is Poisson with mean $\mu \bset$ whenever $\mu \bset < \infty$.  These conditions specify the distribution of $\xi$, which is then determined by the intensity measure $\mu$. 

\end{borrowedtext}

\subsection{Completely random measures}

The law of a random measure $N$ on $\bspace$ is uniquely characterized by its \emph{characteristic functional}
\[
f \mapsto
\EE [ e^{- Nf} ],  \qquad \text{ $f \colon \bspace \to \NNReals$, measurable,}
\]
where $Nf \defas \int f(s)\, N(\dee s)$. 
The following result characterizes Poisson processes:
\begin{thm}[Campbell]\label{campbells}
Let $N$ be a Poisson process on $\borelspace$ with nonatomic mean measure $\mu$ and let $f\colon \bspace \to \NNReals$ be measurable.  Then
\[
\EE[e^{-Nf }]
= \exp \Bigl [ - \int (1 - e^{- f(s)} )\, \mu(\dee s) \Bigr ] \, . 
\]
\end{thm}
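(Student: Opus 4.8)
The plan is to prove the identity first for indicator functions, then extend it to simple functions using the independence of increments, and finally pass to arbitrary nonnegative measurable $f$ by monotone approximation.

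\emph{Indicators.} First I would take $f = t\,\Ind_A$ for a scalar $t \ge 0$ and a set $A \in \bsa$ with $\mu(A) < \infty$. Then $Nf = t\,N(A)$, where $N(A)$ is Poisson with mean $\mu(A)$, so computing the Laplace transform of a Poisson variable gives
\[
\EE \oF{ e^{-t N(A)} } = \exp \oF{ -\mu(A)\of{1-e^{-t}} }.
\]
This agrees with the claimed formula, since the integrand $1 - e^{-f(s)}$ vanishes off $A$ and equals $1-e^{-t}$ on $A$, whence $\int \of{1 - e^{-f(s)}}\, \mu(\dee s) = \of{1-e^{-t}}\,\mu(A)$.

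\emph{Simple functions.} Next I would treat $f = \sum_{i=1}^m t_i\,\Ind_{A_i}$ with the $A_i \in \bsa$ pairwise disjoint and of finite $\mu$-measure. Here $Nf = \sum_{i=1}^m t_i\,N(A_i)$, and because $N$ has independent increments the random variables $N(A_1),\dotsc,N(A_m)$ are mutually independent. Hence $\EE[e^{-Nf}]$ factors into a product of the single-set transforms just computed, while the integral $\int \of{1-e^{-f}}\,\dee\mu$ splits additively over the disjoint $A_i$ (the integrand vanishing outside $\bigcup_i A_i$); the two sides therefore coincide.

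\emph{General $f$.} Finally, for measurable $f\colon \bspace \to \NNReals$, I would use the $\sigma$-finiteness of $\mu$ to write $\bspace = \bigcup_j B_j$ with $\mu(B_j)<\infty$, and pick simple functions $f_k$, each supported on $B_1 \cup \dotsm \cup B_k$, with $f_k \upto f$ pointwise. For each fixed realization of $N$, monotone convergence for the integral against $N(\argdot)$ gives $Nf_k \upto Nf$, so $e^{-Nf_k} \downto e^{-Nf}$; as these are bounded by $1$, dominated convergence yields $\EE[e^{-Nf_k}] \to \EE[e^{-Nf}]$. At the same time $1 - e^{-f_k} \upto 1-e^{-f}$, so monotone convergence gives $\int\of{1-e^{-f_k}}\,\dee\mu \to \int\of{1-e^{-f}}\,\dee\mu$, and continuity of $\exp$ carries this through the right-hand side. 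Passing to the limit in the simple-function identity finishes the proof. The main obstacle is this last limiting step, which must be justified at two levels: pathwise in the realization of $N$ (to obtain $Nf_k \upto Nf$, a quantity that may be $+\infty$ when $\mu(\bspace)=\infty$ and $N$ has infinitely many atoms) and then in expectation. One should also check that the formula behaves correctly when $\int\of{1-e^{-f}}\,\dee\mu = \infty$: there the right-hand side is $0$, which forces $Nf = \infty$ almost surely, consistent with $\EE[e^{-Nf}]=0$. The single-set computation and the factorization over disjoint sets are routine; all the care lives in the interchange of limits.
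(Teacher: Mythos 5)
Your argument is correct and is the standard textbook proof of Campbell's theorem (more precisely, of the Laplace functional formula for a Poisson process): indicators via the Poisson Laplace transform, simple functions via independence of increments over disjoint sets, and a monotone-class passage to general nonnegative measurable $f$. The paper itself offers no proof of this statement --- it is quoted as a classical result from the theory of Poisson processes (cf.\ Kallenberg) --- so there is nothing to compare against; your write-up would serve as a complete proof. The two points you flag as requiring care (the pathwise monotone convergence $Nf_k \upto Nf$ for each realization of $N$, and the consistency of both sides when $\int (1-e^{-f})\,\dee\mu = \infty$) are exactly the right ones, and your treatment of each is sound. One incidental remark: the nonatomicity of $\mu$ assumed in the statement is never needed in your argument, and indeed the identity holds for any Poisson process with $\sigma$-finite intensity; the hypothesis is there only to match the setting in which the theorem is invoked later in the paper.
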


By a \defn{completely random measure} we simply mean a random measure with independent increments.  
Poisson processes are the simplest type of completely random, and as we will see, Poisson processes play a fundamental role  in the theory of completely random measures.
(The interested reader is referred to \citep{MR0210185} and Kallenberg~\citep[][Chp.~12]{FMP2} for further details.  
Note that our definition of random measure ensures certain weak finiteness conditions.)

Let $\xi$ be a random measure on a \general\ space $\borelspace$.  We say that $\bp \in \bspace$ is a \defn{fixed atom} when $\Pr \theset{ \xi\theset{\bp}  > 0 } > 0$.
We begin with a characterization of completely random measures without fixed atoms:

\begin{thm}[{Kingman~\citep{MR0210185}, Kallenberg~\citep[][Cor.~12.11]{FMP2}}]\label{thm:crmrep}
Let $\xi$ be a random measure on $\borelspace$ such that $\xi\theset s=0$ \as\ for all $s$.
Then $\xi$ is a completely random measure if and only if
\[\label{crmrep}
\xi A = \mu A + \int_{(0,\infty)} \pp \, \eta(A \times \dee \pp) \AS,\qquad  A \in \bsa,
\]
for some nonrandom nonatomic measure $\mu$ on $\bspace$ and Poisson process $\eta$ on $\bspace \times (0,\infty)$.
Furthermore, $\xi A < \infty$ \as\ for some $A \in \bsa$ if and only if $\mu(A) < \infty$ and 
\[
\int_0^\infty (\pp\wedge 1) \, \EE \eta(A \times \dee \pp) < \infty.
\]
Note that $\EE \eta\{(s,\pp) \st \pp > 0\} = \EE \eta (\bspace \times (0,\infty)) = 0$ for all $s$.
\end{thm}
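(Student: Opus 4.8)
The plan is to prove the equivalence in two directions, dispatching the easy implication by direct verification and concentrating the effort on the construction of the representation. For the ``if'' direction, suppose $\xi A = \mu A + \int_{(0,\infty)} \pp\, \eta\of{A \times \dee \pp}$ with $\mu$ nonrandom and nonatomic and $\eta$ a Poisson process on $\bspace \times (0,\infty)$. Given disjoint sets $\bset_1, \dotsc, \bset_n \in \bsa$, the product regions $\bset_j \times (0,\infty)$ are disjoint, so the increments $\eta\of{\bset_j \times \argdot}$ are independent by the defining property of a Poisson process; the integrals $\int \pp\, \eta\of{\bset_j \times \dee \pp}$ are therefore independent, and adding the deterministic quantities $\mu \bset_j$ preserves independence. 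Hence $\xi$ has independent increments. The no-fixed-atom condition is then equivalent to $\EE\eta$ assigning zero mass to each $\theset s \times (0,\infty)$, which is the content of the final line of the statement.

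For the ``only if'' direction I would first extract the atoms of $\xi$ as a point process on the product space. On a Borel space the atom locations $s$ with $\xi\theset s > 0$ and their masses $\xi\theset s$ can be measurably enumerated, giving a point process $\eta \defas \sum \delta_{\of{s,\, \xi\theset s}}$ on $\bspace \times (0,\infty)$. The key claim is that $\eta$ is Poisson. It inherits independent increments from $\xi$: counts of atoms over disjoint products $\bset_j \times B_j$ are measurable functions of the restriction of $\xi$ to the disjoint sets $\bset_j$, hence independent. Because $\xi$ has no fixed atoms, neither does $\eta$, and $\eta$ is simple by construction. I would then invoke the characterization that a simple point process with independent increments and no fixed atoms is Poisson (Kallenberg~\citep{FMP2}); this both identifies $\eta$ as Poisson and forces its intensity $\EE\eta$ to vanish on each $\theset s \times (0,\infty)$.

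It remains to show that the diffuse remainder $\xi_d A \defas \xi A - \int_{(0,\infty)} \pp\, \eta\of{A \times \dee \pp}$ is deterministic. This residual is atomless and again has independent increments, because over disjoint sets its values are measurable functions of the restriction of $\xi$ to those sets, which are independent. Fixing $A$ and partitioning it into ever finer pieces exhibits $\xi_d A$ as a sum of independent contributions, so it is infinitely divisible; being nonnegative it has no Gaussian component, and being atomless it contributes no jumps (a jump would be an atom of $\xi_d$), so its L\'evy--Khinchine exponent is purely linear and $\xi_d A$ is \as\ constant. Setting $\mu A \defas \xi_d A$ yields a nonrandom, nonatomic measure and completes the representation \cref{crmrep}. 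Equivalently, one can run this argument at the level of Laplace functionals, matching the jump part against $\eta$ via Campbell's formula, \cref{campbells}, and reading off the linear remainder as $\mu$.

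Finally, for the finiteness criterion, $\int_{(0,\infty)} \pp\, \eta\of{A \times \dee \pp}$ is a Poisson integral whose \as\ finiteness is governed by the standard convergence criterion: the atoms with $\pp > 1$ are finite in number exactly when $\EE\eta\of{A \times (1,\infty)} < \infty$, while the sum of the small atoms converges exactly when $\int_{(0,1]} \pp\, \EE\eta\of{A \times \dee \pp} < \infty$; together with $\mu A < \infty$ these amount to $\mu A < \infty$ and $\int_0^\infty (\pp \wedge 1)\, \EE\eta\of{A \times \dee \pp} < \infty$, by a Campbell or Kolmogorov three-series computation. I expect the main obstacle to be the implication ``independent increments $\imp$ Poisson'' for $\eta$, together with the measurable extraction of atoms-with-masses and the careful propagation of the no-fixed-atom hypothesis from $\xi$ to $\eta$; the deterministic-diffuse-part step is the other delicate point, resting on nonnegativity to exclude both Gaussian and jump contributions.
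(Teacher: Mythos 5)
The paper does not prove \cref{thm:crmrep} at all: it is stated as a background result and attributed to Kingman~\citep{MR0210185} and to Kallenberg~\citep[Cor.~12.11]{FMP2}, so there is no in-paper argument to compare against. Your reconstruction is essentially the classical proof from those sources, and it is sound: the ``if'' direction by restriction of the Poisson process to disjoint cylinders, the extraction of the atom--mass point process $\eta = \sum_s \delta_{(s,\xi\theset s)}$ on the Borel product space, the identification of $\eta$ as Poisson, and the null-array argument showing the diffuse residue is a deterministic drift. Two points deserve sharpening. First, your independence argument for $\eta$ only covers product sets whose \emph{bases} are disjoint; sets such as $A \times B_1$ and $A \times B_2$ with $B_1 \cap B_2 = \emptyset$ are not handled, so the generic ``simple + independent increments + no fixed atoms $\Rightarrow$ Poisson'' criterion does not apply verbatim on the product space. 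What is needed is the marked-point-process version of that criterion (\citep[Thm.~12.10]{FMP2}), which takes as hypothesis exactly independence over disjoint base sets together with simplicity of the projection $\eta(\argdot \times (0,\infty))$ and absence of fixed atoms --- this is precisely the lemma the paper itself invokes in its proof of \cref{thm:bernoullichar}, so you should cite that form. Second, in the deterministic-diffuse-part step, ``atomless $\Rightarrow$ no jumps'' requires a dissecting system (available since $\bspace$ is lcscH) to produce a null array of independent nonnegative summands whose maximum tends to zero \as; from that one reads off a vanishing L\'evy measure and hence a pure drift. Both are standard repairs, and with them your proof is complete.
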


\begin{remark}
We will sometimes write \cref{crmrep} more compactly as
\[
\xi = \mu + \sum_{(s,\pp) \in \eta} \pp\, \delta_s \AS
\]
\end{remark}

When a completely random measure $\xi$ has the form of \cref{crmrep}, we call $\mu$ the 
\defn{\nonrandomcomp\ component}, and we will call $\xi - \mu$ the \defn{ordinary component}.   It follows immediately from \cref{thm:crmrep} that an arbitrary completely random measure is of the form $\xi + \chi$, where $\xi$ is as in \cref{thm:crmrep} and $\chi$ is a purely-atomic random measure, independent of the Poisson process $\eta$, and supported on a nonrandom countable subset $\Atoms \subseteq \bspace$, where the random variables $\chi\theset s$, for $s\in \Atoms$, are independent.  In this case, we will call $\chi$ the \defn{\fixedcomp\ component} and $\Atoms$ the \defn{fixed atoms}.

Consider the so-called \defn{L\'evy measure} on $\bspace \times (0,\infty)$ given by
\[
\nu \defas \EE \of { \textstyle\sum_s \delta_{(s, \xi\theset s)} }.
\]
It follows that 
\[\textstyle
\nu = \EE \eta + \sum_{s \in \FixedAtoms} \delta_{s} \otimes \law(\xi \theset s)
\]
and 
$\nu(\theset s \times (0,\infty)) \le 1$ for all $s \in \bspace$,
and so the law of a completely random measure $\xi + \chi$ is uniquely characterized by specifying 
its \nonrandomcomp\ component $\mu$ and its L\'evy measure $\nu$, as the latter encodes the position and distribution of the fixed atoms, as well as the intensity of the underlying Poisson process.

In the other direction, given any $\sigma$-finite measure $\nu$ on $\bspace \times (0,\infty)$ such that 
$\nu( \theset s \times (0, \infty)) \le 1$, for every $s \in \bspace$,
there is a completely random measure whose L\'evy measure is $\nu$.
In particular, let 
\[
\Atoms \defas \{ s \in \bspace \st \nu(\theset s \times (0,\infty)) > 0 \}
\]
be the countable set of nonnull $\bspace$-sections of $\nu$, let $\bar\eta$ be the restriction of $\nu$ to $(\bspace \setminus \Atoms) \times (0,\infty)$, let $\eta$ be a Poisson process with intensity $\bar\eta$, and let $\chi$ be a random measure independent from $\eta$ and supported on $\Atoms$ such that the masses $\chi \theset s$, for $s \in \Atoms$, are independent with distribution $\nu( \theset s \times \argdot) + (1-\nu( \theset s \times (0,\infty)))\, \delta_0$.  Then 
\[
\chi + \sum_{(s,\pp) \in \eta} \pp\, \delta_s 
\]
is completely random, with L\'evy measure $\nu$.

If we write $M_s (t) \defas \EE [ e^ { -t \, \chi \theset s }] $ for the moment-generating function of $-\chi \theset s$, for $s \in \FixedAtoms$, then the characteristic functional of $\xi + \chi$ is the map 
\[
f \mapsto 
\exp \Of {  -\mu f + \sum_{s \in \FixedAtoms} M_s(f(s))
- \int (1- e^{-\pp\, f(s)}) \, \EE \eta(\dee s \times \dee \pp)
}.
\]

We now introduce two families of completely random measures---beta and Bernoulli processes---that are the focus of \cref{sec:oneparam}.
The beta process was introduced by Hjort~\citep{MR1062708} and later connected to Indian buffet process by Thibaux and Jordan~\citep{Thibaux2007}, who introduced the notion of Bernoulli processes as defined below.

\begin{quote}
{\bf 
For the remainder,
let 
\[
\BM \defas \NABM + \textstyle\sum_{s \in \FixedAtoms} \BM\theset s \,\delta_s
\]
be a $\sigma$-finite measure on $\bspace$, where $\NABM$ is nonatomic; $\FixedAtoms \subseteq \bspace$ is countable; and $\BM \theset s \in (0,1]$ for all $s \in \FixedAtoms$.
}
\end{quote}

\subsection{Bernoulli processes}

By a \defn{Bernoulli process} with \bermean\ $\BM$
we mean a purely-atomic completely random measure $X$ with L\'evy measure $\BM \otimes \delta_1$, 
written
\[
X \dist \BePLAW(\BM).
\]
In particular, $X$ has no \nonrandomcomp\ component, its fixed atoms are the atoms $\FixedAtoms$ of $\BM$, each appearing independently in $X$ with probability $\BM\theset s$, and the intensity measure of the Poisson process underlying the ordinary component $X$ is $(\dee s,\dee q) \mapsto \delta_1(\dee q) \, \NABM(\dee s)$.
The characteristic functional of $X$ is
\[
f \mapsto
\exp \Bigr [ - \int (1- e^{-f(s)}) \, \NABM(\dee s) \Bigr ] \times 
\prod_{s \in \FixedAtoms} \Bigr [ 1 - \BM\theset s + \BM\theset s \, e^{-f(s)} \Bigr ],
\]
which, in light of \cref{campbells}, highlights the relationship between Bernoulli processes and Poisson processes.  In particular, the ordinary component of $X$ is simply a Poisson process with intensity $\NABM$.

By the form of the L\'evy measure, it is straightforward to show that a Bernoulli process is \as\ simple.
In fact, every \as\ simple completely random measure is a Bernoulli process:

\begin{thm}\label{thm:bernoullichar}
Let $X$ be a random measure on a \general\ space $\borelspace$.  Then $X$ is a Bernoulli process if and only if $X$ is \as\ simple and completely random.
\end{thm}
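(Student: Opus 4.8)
The plan is to derive both implications from the structure theorem for completely random measures, \cref{thm:crmrep}, together with the ensuing fact that the law of such a measure is determined by its nonrandom nonatomic component and its Lévy measure $\nu$. For the forward implication I would start from $X \dist \BePLAW(\BM)$, which is completely random by definition, and argue a.s.\ simplicity directly: its fixed atoms $s \in \FixedAtoms$ carry $\{0,1\}$-valued (Bernoulli) mass, while its ordinary component is a Poisson process whose Lévy measure $\NABM\otimes\delta_1$ is concentrated on $\bspace\times\{1\}$, so every ordinary atom also has unit mass; because $\NABM$ is nonatomic, the $\bspace$-marginal of the Poisson intensity is nonatomic, whence the ordinary atoms fall at a.s.\ distinct points and a.s.\ avoid the countable set $\FixedAtoms$. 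All atoms are thus distinct and of unit mass, i.e., $X$ is a.s.\ simple.

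For the substantive converse I would take $X$ a.s.\ simple and completely random and invoke the representation $X = \mu + \sum_{(s,\pp)\in\eta}\pp\,\delta_s + \chi$ supplied by \cref{thm:crmrep} and the discussion following it, with $\mu$ nonrandom nonatomic, $\eta$ Poisson with nonatomic $\bspace$-marginal, and $\chi$ the fixed-atomic part. The goal is to read off two facts from simplicity. First, $X$ is a.s.\ purely atomic, and since the ordinary and fixed-atomic parts are purely atomic, the diffuse part of $X$ coincides with the nonrandom measure $\mu$; being a.s.\ zero and nonrandom, $\mu = 0$. Second, every atom of a simple point process has unit mass, so $X\theset s \in \{0,1\}$ for all $s$ and the Lévy measure $\nu = \EE\sum_s \delta_{(s, X\theset s)}$ places no mass off $\bspace\times\{1\}$; hence $\nu = \BM\otimes\delta_1$, where $\BM(A)\defas \nu(A\times\{1\}) = \EE X(A)$.

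It then remains to check that $\BM$ is a bona fide $\sigma$-finite hazard measure and to conclude. The universal bound $\nu(\theset s\times(0,\infty))\le 1$ gives $\BM\theset s \le 1$, and $\sigma$-finiteness follows because, on a fixed partition $A_1,A_2,\dotsc$ witnessing that $X$ is a random measure, a.s.\ finiteness of $X(A_k)$ forces (via Borel--Cantelli for the independent Bernoulli fixed atoms and finiteness of the Poisson mean for the ordinary part) the summability $\BM(A_k) = \EE X(A_k) < \infty$. Since a Bernoulli process is exactly the completely random measure with vanishing nonrandom nonatomic component and Lévy measure $\BM\otimes\delta_1$, the uniqueness of the law given $(\mu,\nu) = (0, \BM\otimes\delta_1)$ yields $X \dist \BePLAW(\BM)$.

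I expect the main obstacle to be the second step of the converse: turning the pathwise statement ``distinct unit atoms'' into the measure-theoretic identity $\nu = \BM\otimes\delta_1$ and securing the $\sigma$-finiteness of $\BM$, since only then does the law-uniqueness characterization apply and let us identify $X$ with a Bernoulli process rather than merely matching its intensity.
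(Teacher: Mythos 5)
Your proposal is correct and follows essentially the same route as the paper: both directions come down to reading off the L\'evy measure $\BM\otimes\delta_1$ from \as\ simplicity and invoking the structure theory of completely random measures. The only (minor) difference is that the paper's converse verifies Poissonianity of the atom configuration $\sum_{s}\delta_{(s,1)}$ directly via the simple-point-process characterization, whereas you cite the packaged representation of \cref{thm:crmrep} together with uniqueness of the law given $(\mu,\nu)$; your explicit elimination of the nonrandom nonatomic component and your check of the $\sigma$-finiteness of $\BM$ fill in details the paper leaves implicit.
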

\begin{proof}%
The forward direction follows in a straightforward way from the definition of a Bernoulli process.  
In the other direction, let $X$ be \as\ simple and completely random, and put $\BM = \EE X$.  Then $X = \chi + \xi$ is the sum of a \fixedcomp\ component $\chi$ and ordinary component $\xi$.  By the \as\ simplicity of $\chi$, we may write $\chi = \sum_{s \in \FixedAtoms} \pp_s \delta_s$ where $\FixedAtoms \subseteq \bspace$ is countable, and the $\pp_s$, for $s\in\FixedAtoms$, are independent Bernoulli random variables with mean $\BM\theset s$.  It follows that the L\'evy measure of $\chi$ is $\sum_{s\in\FixedAtoms} p_s \delta_{s,1} = \BM(\cdot \cap \FixedAtoms) \otimes \delta_1$ and so $\chi$ is a Bernoulli process.  By independence of increments, it suffices to show that $\xi$ is also a Bernoulli process.

We have $\xi\theset s= 0$ \as\ for all $s$, and so 
$\eta 
 = \sum_s \delta_{s,\xi\theset s}
 = \sum_{s \in \xi} \delta_{s,1}$ 
is a completely random point process on $\bspace \times (0,\infty)$ satisfying $\eta(\theset s \times (0,\infty)) = 0$ \as\ for all $s$, and moreover, $\eta( \cdot \times (0,\infty))$ is a $\sigma$-finite point process.
It follows from Theorem 12.10 (FMP) that $\eta$ is a Poisson process.  But then $\xi = \sum_{(s,\pp) \in \eta} \pp\, \delta_s$, and so $\xi$ has L\'evy measure $\EE\eta = \BM(\cdot \setminus \FixedAtoms) \otimes \delta_1$, and is thus a Bernoulli process.
\end{proof}

It follows immediately that the law of a Bernoulli process $X$ is characterized by its mean $\EE X = \BM$.  (If the so-called \defn{mass parameter} $\BM(\bspace)$ is finite, then it is also the expected cardinality of the Bernoulli process when considered as a random set.)

\subsection{Beta processes}
Let $\conc : \bspace \to \NNReals$ be a measurable function.
By a \defn{beta process with concentration function $\conc$ and \bpmean\ $\BM$},
we mean a completely random measure $\rH$ on $\borelspace$, written
\[
\rH \dist \BPLAW(\conc,\BM),
\]
when it is a purely-atomic completely random measure with L\'evy measure $\nu_{na} + \nu_{a}$ where 
\[
\nu_{na}(\dee s,\dee p) = \conc(s) \,p^{-1}(1-p)^{\conc(s)-1} \dee p \, \NABM(\dee s)
\]
corresponds with the ordinary component and
\[
\nu_{a} = \sum_{s \in \FixedAtoms} \delta_s \otimes \Beta \ooof{ \conc(s) \, \BM \theset s, \conc(s)\,(1-\BM \theset s) }
\]
corresponds with the \fixedcomp\ component.  (Implicit is the requirement that $\nu_{na}$ is $\sigma$-finite, which follows, e.g., if $\theta$ is bounded, because $\NABM$ is assumed to be $\sigma$-finite.) 
When $\conc$ is constant, we will refer to it as the concentration \emph{parameter}.  

The remainder of the document will provide a great deal of insight into the structure of a beta process, but it is worthwhile stating a few of its properties here:  
First of all, as expected, $\EE B = \BM$.
When $\NABM$ is nonzero, $\nu_{na}$ is merely $\sigma$-finite, even when $\NABM$ is finite, and so the ordinary component $\ord \rH$ of $\rH$ has infinitely many atoms with probability one. 
The beta process has several direct ``stick-breaking'' constructions.  
Several of these (\citep{Thibaux2007,PZWGC2010,PBJ2012}) have analogues in \cref{sec:intro} and later, and so we describe one due to 
Teh, G\"or\"ur, and Ghahramani \citep{TehGorGha2007}, which is particularly simple to describe.  In the case when $\gamma = \NABM (\bspace) < \infty$ and $\theta \equiv 1$, 
\[
\ord \rH = \sum_{n=1}^\infty \Ooooof { \prod_{j=1}^n V_j } \, \delta_{\varsigma_n} \AS,
\]
where $\process \varsigma \Nats$ are \iid\ and $\gamma\inv \NABM$-distributed, and $\nprocess V \Nats j$ are \iid\ and $\Beta(\gamma,1)$-distributed.

Finally, the beta and Bernoulli process are conjugate in the following sense:
\begin{thm}[conjugacy; Hjort, Kim, Thibaux-Jordan] \label{berconditional}
Let $\rH$ be a beta process on $\bspace$ with \bpmean\ $\BM$ and concentration parameter $\conc>0$.  Conditioned on $\rH$,  let $\process X \Nats$ be independent Bernoulli processes with \bermean\ $\rH$.  Then
\[\label{postpred}
X_{n+1} \given X_{[n]} \sim \BePLAW \oooof{ \frac {\conc} {\conc+n} \BM + \frac 1 {\conc+n} \sum_{i=1}^n X_i }.
\]
\end{thm}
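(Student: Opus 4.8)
The plan is to establish the result in two movements: first, to show that the predictive law $\law(X_{n+1}\given X_{[n]})$ is that of \emph{some} Bernoulli process by appealing to \cref{thm:bernoullichar}; and second, to identify its \bermean\ as the posterior mean $\EE[\rH\given X_{[n]}]$, which I will compute directly and show equals $\frac{\conc}{\conc+n}\BM + \frac1{\conc+n}\sum_{i=1}^n X_i$. The key auxiliary fact is a mixing lemma: if $\rH$ is any completely random \hazard\ measure and, conditionally on $\rH$, $X\sim\BePLAW(\rH)$, then marginally $X\sim\BePLAW(\EE\rH)$. Indeed, $X$ is conditionally \as\ simple, hence \as\ simple, so by \cref{thm:bernoullichar} it suffices to check complete randomness. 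For disjoint $A_1,\dotsc,A_k\in\bsa$ and bounded measurable $g_1,\dotsc,g_k$, conditioning on $\rH$ and using that a Bernoulli process has independent increments given its \bermean, with the increment on $A_j$ a functional of $\rH$ restricted to $A_j$, gives $\EE\prod_j g_j(XA_j)=\EE\prod_j \psi_j\of{\rH(\argdot\cap A_j)}$; complete randomness of $\rH$ then factors this as $\prod_j\EE\,g_j(XA_j)$. Applied marginally this already recovers $X_i\sim\BePLAW(\BM)$.

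To bring the lemma to bear on the predictive, note that $X_{n+1}\ind X_{[n]}\mid\rH$, so $\law(X_{n+1}\given X_{[n]})$ is a mixture of $\BePLAW(\rH)$ against the posterior $\law(\rH\given X_{[n]})$. I would then argue that this posterior is itself completely random: since $\rH$ has independent increments and the $X_i$ have conditionally independent increments given $\rH$, the vector-valued measure $(\rH,X_1,\dotsc,X_n)$ has jointly independent increments, i.e.\ for disjoint $A_1,\dotsc,A_k$ the tuples $\of{\rH(A_j),X_1(A_j),\dotsc,X_n(A_j)}$ are independent across $j$; conditioning on the $X_i$-coordinates therefore leaves $\rH(A_1),\dotsc,\rH(A_k)$ conditionally independent given $X_{[n]}$. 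The mixing lemma now yields that $\law(X_{n+1}\given X_{[n]})$ is a Bernoulli process with \bermean\ $\EE[\rH\given X_{[n]}]$.

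It remains to compute this posterior mean, which by complete randomness decomposes over the \fixedcomp\ and ordinary parts of $\rH$. At each $s\in\FixedAtoms$ we have $\rH\theset s\dist\Beta\of{\conc\BM\theset s,\conc(1-\BM\theset s)}$ a priori with $X_i\theset s\mid\rH\theset s$ \iid\ $\Bernoulli(\rH\theset s)$, so the classical beta--Bernoulli conjugacy gives posterior mean $\frac{\conc\BM\theset s+\sum_i X_i\theset s}{\conc+n}$. For the ordinary part I would mark each atom $(s,\pp)$ of the underlying Poisson process $\eta$, with intensity $\conc\,\pp\inv(1-\pp)^{\conc-1}\dee\pp\,\NABM(\dee s)$, by its observed pattern $(X_1\theset s,\dotsc,X_n\theset s)$, which given $\pp$ is \iid\ $\Bernoulli(\pp)$. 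By the marking theorem the result is Poisson on $\bspace\times(0,1]\times\theset{0,1}^n$; the atoms with all-zero pattern---never selected---form an independent Poisson process with thinned intensity $\conc\,\pp\inv(1-\pp)^{\conc+n-1}\dee\pp\,\NABM(\dee s)$, contributing mean $\frac{\conc}{\conc+n}\NABM$, while each atom selected $m\ge1$ times acquires posterior mass $\Beta(m,\conc+n-m)$ of mean $\frac m{\conc+n}=\frac{\sum_i X_i\theset s}{\conc+n}$. Summing the three contributions gives exactly $\frac{\conc}{\conc+n}\BM+\frac1{\conc+n}\sum_{i=1}^n X_i$. The main obstacle is precisely this last step: rigorously justifying, via Poisson marking and the independence of disjoint mark-classes, that conditioning on $X_{[n]}$ converts the selected ordinary atoms into independent fixed atoms with the stated $\Beta$ masses while leaving the unselected atoms as a thinned Poisson process independent of them.
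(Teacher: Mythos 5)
Your proof is correct, but it is not the paper's route: the paper gives no direct proof of \cref{berconditional} at all. It attributes the result to Hjort, Kim, and Thibaux--Jordan, and notes in the accompanying remark that the general de~Finetti theorem for the continuum-of-urns scheme (\cref{maindefinetti}, proved later via characteristic functionals, together with the posterior computation in \cref{exchpostlaw}) subsumes it without Kim's absolute-continuity hypothesis. Your argument instead reconstructs the classical conjugacy proof from first principles: (i) a mixing lemma showing that a $\BePLAW(\rH)$ process with completely random $\rH$ is marginally $\BePLAW(\EE\rH)$, which correctly leans on \cref{thm:bernoullichar} and on the fact that restrictions of a completely random measure to disjoint sets are independent \emph{as random measures} (not merely as real random variables --- worth stating, though it follows by refining into disjoint pieces); (ii) complete randomness of the posterior of $\rH$ given $X_{[n]}$ via joint independence of increments; and (iii) the posterior mean via beta--Bernoulli conjugacy at the fixed atoms and Poisson marking/thinning on the ordinary component, where the computations $\conc\int_0^1(1-\pp)^{\conc+n-1}\dee\pp = \conc/(\conc+n)$ and $\Beta(m,\conc+n-m)$ with mean $m/(\conc+n)$ are all correct. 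The step you flag as the main obstacle is standard and does not fail: the intensity of each nonzero mark class factors as a finite measure on $\bspace$ tensored with the probability measure $\Beta(m,\conc+n-m)$, so the marking theorem (run in reverse) gives \iid\ $\Beta(m,\conc+n-m)$ masses at the revealed locations, independent of the all-zero class. What your route buys is a self-contained proof making \cref{thm:main} in Section~3 logically independent of the heavier machinery of Sections~5--6; what the paper's route buys is uniformity, since the identical argument through \cref{maindefinetti} covers arbitrary EPPF families rather than only the Dirichlet/beta case.
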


\begin{remark}
This result was first shown by Hjort~\citep[][Cor.~4.1]{MR1062708} for the case of censored observations in $\bspace = \Reals$.  This result can be seen as a corollary of a result due to Kim~\citep[][Thm.~3.3]{MR1714717}, who studied censored observations from general completely random hazard measures.   Thibaux and Jordan~\citep{Thibaux2007} presented the result in the form above, and showed that an Indian buffet process was the combinatorial structure of a conditionally \iid\ sequence of Bernoulli processes satisfying \cref{postpred}.  Note that Theorem~3.3 of \citep{MR1714717} assumes that the \nonrandomcomp\ part of $\BM$ is absolutely continuous w.r.t.\ Lebesgue measure.  This is not necessary; indeed, the proof does not rely on the assumption in any deep way.  
\cref{maindefinetti} implies the above claim with no such assumption, and so we omit the proof here.
\end{remark}

The conditional independence and mean structure above will reappear many times below, and so we introduce the following terminology:

\begin{definition}
Let $\rH$ be a random measure and let $\process X \Nats$ be a sequence of Bernoulli processes.  We will say that 
$\process X \Nats$ is \defn{an exchangeable sequence of Bernoulli processes directed by $\rH$} (or, when the context is clear, that
$\process X \Nats$ is \defn{directed by $\rH$})
when, conditioned on $\rH$, the $X_1,X_2,\dotsc$ are independent Bernoulli processes with \bermean\ $\rH$.
\end{definition}

For every measurable function $f : \bspace \to \NNReals$ and measure $\nu$ on $\borelspace$, define the measure $f\nu$ by $(f\nu)(A) = \int_A f(s) \nu(\dee s)$, for $A\in\bsa$.  The following result will be used regularly without comment, and follows easily from an approximation by simple functions and monotonic convergence:

\begin{prop}\label{prop:rdmean}
Let $f : \bspace \to \NNReals$ be a nonnegative, measurable function and $\xi$ a random measure on $\borelspace$ with intensity $\rH$.
Then $\EE f\xi= f \rH$.\qed
\end{prop}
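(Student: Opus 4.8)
The plan is to prove the equality of measures $\EE f\xi = f\rH$ by checking it set-by-set, i.e.\ by showing $\EE[(f\xi)(A)] = (f\rH)(A)$ for every $A \in \bsa$, and to obtain this through the standard bootstrap from indicators, to simple functions, to general nonnegative measurables. Two measures agree iff they agree on all of $\bsa$, so this suffices.

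First I would treat an indicator $f = \bone_B$ with $B \in \bsa$. By the definition of $f\xi$ one has $(f\xi)(A) = \int_A \bone_B(s)\,\xi(\dee s) = \xi(A \cap B)$, whence $\EE[(f\xi)(A)] = \EE[\xi(A\cap B)] = \rH(A\cap B)$, the last equality being exactly the definition of the intensity $\rH = \EE\xi$ evaluated on $A \cap B$. Since $\rH(A\cap B) = \int_A \bone_B(s)\,\rH(\dee s) = (f\rH)(A)$, the identity holds for indicators. This step also records that $(f\xi)(A)$ is a genuine random variable, as it is assembled from the random variables $\xi(\argdot)$. By linearity of the pathwise integral defining $f\xi$ and of expectation, the identity then extends to any nonnegative simple $f = \sum_{i=1}^m a_i \bone_{B_i}$ with $a_i \ge 0$ and $B_i \in \bsa$, since both sides equal $\sum_{i=1}^m a_i\, \rH(A\cap B_i)$.

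Finally, for arbitrary nonnegative measurable $f$, I would pick simple functions $f_k \upto f$ pointwise (the usual dyadic approximation). For each fixed sample point $\xi(\argdot)$ is an ordinary measure, so monotone convergence gives $(f_k\xi)(A) \upto (f\xi)(A)$ almost surely, and a second application of monotone convergence on the probability space yields $\EE[(f_k\xi)(A)] \upto \EE[(f\xi)(A)]$; on the deterministic side, monotone convergence for the measure $\rH$ gives $(f_k\rH)(A) \upto (f\rH)(A)$. As the two sides agree for each $k$ by the simple-function case, passing to the limit gives $\EE[(f\xi)(A)] = (f\rH)(A)$. There is essentially no genuine obstacle here—the statement is a repackaging of Tonelli's theorem for the kernel $\xi$—and the only points warranting a word of care are the measurability of $(f\xi)(A)$ (established in the indicator step and preserved under monotone limits) and the observation that no finiteness of $\xi$ or $\rH$ is needed, since monotone convergence for nonnegative integrands is unconditional and $\sigma$-finiteness of $\xi$ serves only to guarantee that $f\xi$ is again a well-defined random measure.
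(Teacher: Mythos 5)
Your argument is correct and is precisely the route the paper indicates: the proposition is stated with only the remark that it ``follows easily from an approximation by simple functions and monotonic convergence,'' which is exactly your indicator--simple--monotone bootstrap. Nothing further is needed.
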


\section{The continuum-of-Blackwell--MacQueen-urns scheme}
\label{sec:oneparam}

Let $\conc \ge 0$, let
$\Uniform$ denote the uniform distribution on $[0,1]$,
and let $\process Z \Nats \defas (Z_1,Z_2,\dotsc)$ be a sequence of random variables in $[0,1]$ such that $Z_1 \dist \Uniform$ and %
\[\label{bmus}
Z_{n+1} \given Z_{[n]} \dist \frac {\conc} {\conc+n}\, \Uniform + \frac 1 {\conc + n} \sum_{j \le n} \delta_{Z_j}, \quad \text{for } n \in \Nats,
\]
where $Z_{[n]} \defas (Z_1,\dotsc,Z_n)$.
In other words, $\process Z \Nats$ is a \emph{Blackwell-MacQueen urn scheme}, i.e., a conditionally \iid\ sequence of random variables directed by a Dirichlet process (with mean $\Uniform$ and concentration parameter $\conc$, in this case).  The combinatorial structure of $\process Z \Nats$, i.e., the random partition of $\Nats$ induced by the random equivalence relation $\{ (n,m) \subseteq \Nats\times\Nats \st Z_n = Z_m \}$, is that of a Chinese restaurant process, in which the probability of a new table is proportional to $\conc$.

The focus of the remainder of the article is the following construction and its generalizations:
Let $\nY \defas \process \nY \Nats$ be a nonrandom sequence of 
simple measures on a \general\ space $\borelspace$ concentrated on a locally-finite countable set $\Atoms$, and, for every $s \in \Atoms$, let
$\process {Z^s} \Nats$ be an independent copy of $\process Z \Nats$.
Consider the sequence $\process {\nX} \Nats$ 
of simple point processes on $\bspace$, concentrated on $\Atoms$, 
where $\nX_1 \defas \nY_1$ and, for every $n \in \Nats$ and $s\in\Atoms$, we define
\[
\nX_{n+1} \theset s \defas
\begin{cases}
\nX_j \theset s & \text{if $Z^s_{n+1} = Z^s_j$, where $j \le n$,} \\
\nY_{n+1} \theset s & \text{otherwise},
\end{cases}
\]
and, for every $A \in \bsa$, define $\nX_{n+1} A \defas \sum_{s \in A\cap\Atoms} \nX_{n+1} \theset s$.

In other words, $\nX_{n+1}$ is a simple random measure, whose atoms are some random subset of the atoms among $\nY_1,\dotsc,\nY_{n+1}$, and, in particular, conditioned on $\nX_{[n]}$, each such atom $s$ is an atom of $\nX_{n+1}$ independently with probability 
\[\label{oneurn}
\frac {\conc} {\conc+n} \nY_{n+1}\theset s + \frac {1}{\conc+n} \sum_{j=1}^n \nX_j\theset s.
\]

It is straightforward to show that $\process {\nX} \Nats$ is well-defined and that its law, which we will denote by $\cQ {\nY}$, is a measurable function of $\nY$.

\begin{definition}[one-parameter scheme]
Let $\BM$ be a \hazard\ measure on $\borelspace$, and 
let $X \defas \process X \Nats$ be a sequence of random measures on $\borelspace$.
Then we will say that $X$ is a \defn{one-parameter scheme with \cupmean\ $\BM$ and concentration parameter $\conc$} when 
there exists an i.i.d.\ sequence $Y \defas \process Y \Nats$ of Bernoulli processes with \hazard\ measure $\BM$ such that
$\Pr[X|Y] = \cQ {Y}$ a.s.  
\end{definition}

\begin{remark}
We have defined a one-parameter scheme in this way so that the relationship to the more general \COUS\ defined in \cref{sec:cup} is manifest.\footnote{We could have constructed $\process X \Nats$ directly from a sequence of simple point processes $\process Y \Nats$, although doing this rigorously is somewhat cumbersome. 
This construction, based on a randomization, sidesteps several measure-theoretic complications. (See~\citep[][Pg.~226]{FMP2} for another example of randomization.)  
A nonstandard, though elegant construction would employ an \iid\ collection $\process {Z^s} \Nats$, for $s \in \bspace$, of urn schemes, but working with a continuum of \iid\ random variables leads quickly to measurability roadblocks unless one, e.g., works on the minimal (and unique~\citep{MR2180892}) extension of the basic probability space that made $Z$ and $X$ jointly measurable.  On this extended space, we would have a one-way Fubini property, which would allow us to show that the joint law of $\process Y \Nats$ and $\process X \Nats$ is precisely as described above.  Arguably, a construction using such an \iid\ process is more aptly named a \COUS, but we have decided to give a standard construction.}
Because of the special properties of the Blackwell-MacQueen urn scheme, 
we will see that the law of a one-parameter scheme has a simple conditional characterization, which could equally well have served as the definition.
\end{remark}

Let $\cG_n = \sigma(Y_{n+1},X_{[n]})$.
From \cref{oneurn}, we may conclude that $X_{n+1}$ is \as\ simple and has conditionally independent increments given $\cG_n$. 
Therefore, conditioned on $\cG_n$, by \cref{thm:bernoullichar}, $X_{n+1}$ is a Bernoulli process with \bermean\ given by
\[
\EE^\cG X_{n+1} = \frac {\conc} {\conc+n} Y_{n+1} + \frac {1}{\conc+n} \sum_{j=1}^n X_j \ \as,
\]
Because $Y$ is an \iid\ sequence, we may conclude from the definition of $\cQkernel$ that $Y_{n+1}$ is independent of $\cF_n \defas \sigma(X_{[n]}) \subseteq \cG$, and so, by the chain rule of conditional expectation,
\[\label{aoseuheu}
\EE^{\cF_n} X_{n+1} = \EE^{\cF_n} [\EE^{\cG_n} X_{n+1}] = \frac {\conc} {\conc+n} \BM + \frac {1}{\conc+n} \sum_{j=1}^n X_j \ \as
\]
Moreover, $X_{n+1}$ has conditionally independent increments given $\cF_n$ because $Y_{n+1}$ does.
It follows that 
\[\label{aoseuheueue}
X_{n+1} | \cF_n \dist \BePLAW(\EE^{\cF_n} X_{n+1}).
\]
From \cref{berconditional},
we can now recognize $\process X \Nats$ as having the same conditional law as a conditionally \iid\ sequence of Bernoulli processes directed by a beta process.

\begin{thm}[de~Finetti measure]\label{thm:main}
Let $\process X \Nats$ be a one-parameter scheme on $\borelspace$ with 
\cupmean\ $\BM$ and concentration parameter $\conc$.  Then there is an \as\ unique random measure $\rH$ given by
\[\label{drhmconvergence}
\rH(A) = \lim_{n\to\infty} n^{-1} \sum_{i=1}^n X_i (A) \ \as, \quad A \in \bsa,
\]
Moreover, $\rH$ is a beta process with \bpmean\ $\BM$ and concentration parameter $\conc$, 
and, conditioned on $\rH$, the random measures $X_1,X_2,\dotsc$ are independent Bernoulli processes with \bermean\ $\rH$.
\end{thm}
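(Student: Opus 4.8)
The plan is to identify $\process X\Nats$ in law with a conditionally \iid\ sequence of Bernoulli processes directed by a beta process, and then to transport the de~Finetti structure across that identification. The groundwork is already laid: \eqref{aoseuheu}--\eqref{aoseuheueue} show that, for every $n$, the one-parameter scheme satisfies $X_{n+1}\mid\cF_n\dist\BePLAW\bigl(\frac{\conc}{\conc+n}\BM+\frac1{\conc+n}\sum_{j=1}^n X_j\bigr)$, with initial law $X_1\dist\BePLAW(\BM)$. By the conjugacy theorem \cref{berconditional}, these are exactly the initial law and one-step predictive distributions of a sequence $\process{X'}\Nats$ that, conditioned on a beta process $\rH'$ with \bpmean\ $\BM$ and concentration parameter $\conc$, is \iid\ with \bermean\ $\rH'$.

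Next I would upgrade this coincidence of predictive kernels into an equality of joint laws. Since $\bspace$ is \general\ it is Borel, so $\Measures\borelspace$ is standard Borel and regular conditional distributions exist. For both sequences the conditional law of the $(n+1)$-st term given the first $n$ is the \emph{same} measurable kernel of the past, sending $(\mu_1,\dots,\mu_n)$ to the Bernoulli-process law with \bermean\ $\frac{\conc}{\conc+n}\BM+\frac1{\conc+n}\sum_{j=1}^n\mu_j$, by \cref{berconditional} and \eqref{aoseuheueue}. An induction on $n$ then forces all finite-dimensional distributions to agree, whence $\process X\Nats\equaldist\process{X'}\Nats$ as random elements of $(\Measures\borelspace)^\Nats$.

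With this identity in hand, the conclusions transfer. The sequence $\process{X'}\Nats$ is exchangeable, and for a conditionally \iid\ sequence the directing measure is recovered, \as\ and jointly with the sequence, as the ergodic average $A\mapsto\lim_n n^{-1}\sum_{i=1}^n X'_i(A)$ (the conditional strong law of large numbers, applied set by set using the $\sigma$-finiteness of $\BM$); this limit is \as\ equal to $\rH'$ and is the \as-unique de~Finetti measure. Transporting along the equality in law, the limit \eqref{drhmconvergence} exists \as\ for $\process X\Nats$, defines an \as-unique random measure $\rH$ that is a beta process with \bpmean\ $\BM$ and concentration parameter $\conc$, and, conditioned on $\rH$, the $X_n$ are \iid\ Bernoulli processes with \bermean\ $\rH$.

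I expect the real work to be measure-theoretic rather than probabilistic. The two delicate points are: (i) the passage from matching kernels to an identity of joint laws, which needs regular conditional distributions on a standard Borel space and the induction above; and (ii) turning the per-set \as\ limit of \eqref{drhmconvergence} into a genuine, \as-defined random measure. For (ii) I would fix a countable ring generating $\bsa$, secure simultaneous convergence on it off one null set, check finite additivity and $\sigma$-finiteness of the limiting set function, and extend by Carath\'eodory; measurability of $\rH$ is then automatic as a pointwise \as\ limit of random measures. Finally, the appeal to \cref{berconditional} requires $\conc>0$; the degenerate case $\conc=0$, in which the underlying urn never draws a fresh value and $X_n\equiv X_1$, is treated directly.
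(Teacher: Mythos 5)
Your proposal is correct and follows essentially the same route as the paper: match the initial law and one-step predictive kernels with those of a beta-process-directed exchangeable sequence via \cref{berconditional} and \eqref{aoseuheu}--\eqref{aoseuheueue}, conclude equality of laws from the finite-dimensional distributions, and then recover the directing measure as the \as\ Ces\`aro limit, with uniqueness secured by simultaneous convergence over a countable generating class. The only cosmetic difference is that the paper first invokes a transfer argument to construct $\rH$ jointly with the sequence and then identifies it with the limit via the disintegration theorem, whereas you define $\rH$ directly as the limit functional of the sequence; both are valid, and your aside about the degenerate case $\conc=0$ is a reasonable extra precaution the paper does not spell out.
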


\begin{proof}%
By \cref{aoseuheu,aoseuheueue}, for all $n \in \NNInts$, the conditional distribution of $X_{n+1}$ given $X_{[n]}$ agrees with \cref{postpred} in \cref{berconditional} when we take $\BM \defas \EE Y_1$.  As the law of stochastic process is determined by its finite dimensional distributions, it follows that
$\process X \Nats$ has the same law as 
an exchangeable sequence $ \process {X'} \Nats$ of Bernoulli processes directed by a beta process \cupmean\ $\rH'$ with \bpmean\ $\BM$ and concentration $\conc$.  

By a transfer argument (\cref{transfer}), there exists a random measure $\rH$ such that 
$ %
(\rH,X_1,X_2,\dotsc) \equaldist (\rH',X'_1,X'_2,\dotsc),
$ %
and so
$\rH$ renders $\process X \Nats$ conditionally independent;
$\rH\equaldist \rH'$ and in particular $\rH$ is a beta process with \hazard\ measure $\BM$ and concentration $\conc$; 
and 
conditioned on $\rH$, each $X_n$ is a Bernoulli processes with \hazard\ measure $\rH$.
Finally, letting $\mathcal F = \sigma(\rH)$, we have $\EE^{\mathcal F} X_n = \rH$ and therefore, we may conclude from the disintegration theorem (\citep[][Thm.~6.4]{FMP2})
and the law of large numbers, that \cref{drhmconvergence} holds.  Because $\bspace$ is Polish, there is a countable $\pi$-system that generates $\bsa$, and so this convergence holds simultaneously for the $\pi$-system and thus $\rH$ is \as\ unique by \citep[][Lem.~1.17]{FMP2}.
\end{proof}

As one can anticipate from the work of Thibaux and Jordan~\citep{Thibaux2007},
the one-parameter scheme is related to the Indian buffet process (IBP) introduced by Griffiths and Ghahramani~\citep{GG05}.
In order to make the connection precise, we introduce the following quotient of the space of sequences of simple measures:
for any pair $U \defas(U_1,U_2,\dotsc)$ and $V \defas (V_1,V_2,\dotsc)$ of sequences of simple measures, $U \sim V$ when there exists a Borel isomorphism 
$\varphi : \bspace \to \bspace$ satisfying $U_n = V_n \circ \varphi^{-1}$ for every $n$.  It is easy to verify that $\sim$ is an equivalence relation.  Let $\CombStruct{U}$ denote the equivalence class containing $U$.  This quotient space is itself a Polish space, and can be related to the Polish space of sequences of simple measures by coarsening the $\sigma$-algebra to that generated by the functionals 
\[
\varphi_h (U_1,U_2,\dotsc) \defas \card {\theset { s \in \bspace \st (\forall j \le n)\, U_j\theset s = h(j) }}, \quad h \in \theset{0,1}^n,\, n\in\Nats.
\]
In other words, $\CombStruct{U}$ maintains only enough information to count, for every $n \in \Nats$, and subset $S \subseteq \Nats$, how many points $s \in \bspace$ are atoms of every and only those sets $X_j$, for $j \le S$, among $X_{[n]}$.  (In the author's opinion, this is the more natural characterization of the equivalence classes induced by the so-called \emph{left-ordered form} proposed by Griffiths and Ghahramani~\citep{GG05}.)

The following connection with IBPs follows both from \cref{thm:main} and~\citep{Thibaux2007}, as well as from the general result (\cref{cupibp}) for the \COUS:
\begin{thm}\label{thm:ibp}
Let $\conc>0$, let $\BM$ be a nonatomic \hazard\ measure such that $\gamma \defas \BM(\bspace) < \infty$,
and let $\process X \Nats$ be a one-parameter scheme induced by $\process Y \Nats$ with concentration parameter $\conc$.
Then $\CombStruct{\process X \Nats}$ is an IBP with mass parameter $\gamma$ and concentration parameter $\conc$.
\end{thm}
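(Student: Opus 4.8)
The plan is to leverage the identification already obtained in \cref{thm:main} and reduce to the known correspondence between beta-process-directed Bernoulli sequences and the IBP. First I would observe that the assignment $(X_1,X_2,\dotsc) \mapsto \CombStruct{(X_1,X_2,\dotsc)}$ is measurable with respect to the coarsened $\sigma$-algebra generated by the functionals $\varphi_h$, so that the law of $\CombStruct{\process X \Nats}$ depends only on the law of the sequence $\process X \Nats$. By \cref{thm:main}, this sequence has the same law as an exchangeable sequence of Bernoulli processes directed by a beta process with \bpmean\ $\BM$ and concentration parameter $\conc$; hence it suffices to compute the combinatorial structure of the latter.

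Next I would exploit the predictive rule. Since $\BM$ is nonatomic, $\NABM = \BM$ and the directing beta process has no \fixedcomp\ component, so every atom of every $X_n$ arises from an ordinary (Poisson) atom. Combining \cref{aoseuheu,aoseuheueue} (equivalently \cref{postpred} of \cref{berconditional}), the conditional law of $X_{n+1}$ given $X_{[n]}$ is that of a Bernoulli process with \bermean\ $\frac{\conc}{\conc+n}\BM + \frac{1}{\conc+n}\sum_{i=1}^n X_i$. I would split this mean into its purely-atomic part, supported on the finitely many atoms already present in $X_{[n]}$, and its nonatomic part $\frac{\conc}{\conc+n}\BM$. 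An atom $s$ that has so far been selected by $m_s \defas \sum_{i\le n} X_i\theset s$ of the first $n$ sets reappears in $X_{n+1}$ independently with probability $m_s/(\conc+n)$ (using $\BM\theset s = 0$), while the nonatomic part contributes, via \cref{thm:bernoullichar} and the characteristic functional of a Bernoulli process, a $\Poisson(\conc\gamma/(\conc+n))$ number of fresh atoms at \iid\ locations distributed as $\gamma\inv\BM$, almost surely disjoint from all previously seen atoms. At the first step, $X_1 \dist \BePLAW(\BM)$ contributes a $\Poisson(\gamma)$ number of atoms.

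This sequential description is exactly the generative process of the two-parameter IBP with mass parameter $\gamma$ and concentration parameter $\conc$ (taking $\conc = 1$ recovers the original IBP): each arriving set re-selects each previously-used atom with probability proportional to its current count and introduces a Poisson number of new atoms with the stated mean. I would therefore conclude that the counts $\Oof{M_h \st h \in \Histories_n}$---which record, for each binary history $h$, the number of atoms selected by exactly the prescribed subset of $X_{[n]}$---have precisely the IBP law; this is the content established by Thibaux and Jordan~\citep{Thibaux2007}, and it also follows from the general \cref{cupibp} upon specializing the structural distribution to $\sdist = \Beta(1,\conc)$.

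The main obstacle is the new-atom computation: one must verify that the nonatomic part of the predictive \bermean\ produces a \emph{Poisson} number of genuinely new atoms whose locations are almost surely distinct from the finitely many already observed, so that no collision occurs between the old-atom and new-atom contributions. This rests on the nonatomicity of $\BM$ (which forces $\BM\theset s = 0$ and makes repeated atoms occur with probability zero) together with the fact, from \cref{thm:bernoullichar} and Campbell's theorem (\cref{campbells}), that the ordinary component of a Bernoulli process with nonatomic \bermean\ is a Poisson process. Everything else is routine matching of parameters against the IBP generative rule.
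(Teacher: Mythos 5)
Your proposal is correct and follows essentially the same route the paper takes: identify the law of $\process X \Nats$ via \cref{thm:main} as that of an exchangeable sequence of Bernoulli processes directed by a beta process, then invoke the Thibaux--Jordan correspondence (equivalently, specialize \cref{cupibp} to $\sdist = \Beta(1,\conc)$). Your explicit unpacking of the predictive rule into the old-atom/new-atom generative description, including the nonatomicity argument ruling out collisions, is a sound and complete filling-in of the details the paper leaves to the citations.
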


The IBP was first defined for the case $\conc = 1$ by Griffiths and Ghahramani \citep{GG05,GG06}, and later generalized to $\conc > 0$ by 
Ghahramani, Griffiths, and Sollich \citep{GGS2007}.

\section{Exchangeable sequences and partitions}
\label{sec:esp}

In this section, we introduce some concepts relating to exchangeable sequences of random variables and partitions.  (The following development owes much to~\citep{MR1481784}, which also provides more details for the interested reader.)  These results will be used subsequently to introduce and characterize a generalization of the one-parameter scheme.

Let $\process Z \Nats$ be an exchangeable sequence of random variables taking values in a \general\ space, and assume that the marginal distribution of the first (and thus every) element, $\mdrm \defas \Pr \theset{ Z_1 \in \argdot }$, is nonatomic.
We are interested in the combinatorial structure of the sequence.  In particular,
let $\Pi_n$ and $\Pi$ be the random partition of $[n]$ and $\Nats$, respectively, induced by the equivalence relation
\[
i \sim j \iff Z_i = Z_j.
\]
We may then write $\Pi = \theset{C_1, C_2, \dotsc }$, where $C_1$ is the class containing $1$, and $C_{n+1}$ is the class containing the first element of $\Nats \setminus \bigcup_{i \le n} C_i$, provided such an element exists, and is the empty class otherwise. (I.e., we define $C_{i} \defas \emptyset$ when $\Pi$ contains fewer than $i$ classes.)  

\newcommand{\ft}{\mathsf M}
To complete the decomposition of $\process Z \Nats$,  define 
\[\label{defnofM}
\ft_i \defas \inf C_i, \quad \text{for $i \in \Nats$,} 
\]
with the convention that $\inf \emptyset = \infty$.  On the event that $\ft_i < \infty$, i.e., $C_i$ is nonempty,  define $\tilde Z_i \defas Z_{\ft_i}$.  We say that $\tilde Z_i$ is the \defn{$i$-th token to appear}.  It is clear that the partition $\Pi$ and tokens $\tilde Z_i$ completely 
determine the sequence $\process Z \Nats$.

We proceed to characterize the probabilistic structure of the combinatorial part.
Let $\nUni_n \le n$ be the number of equivalence classes in $\Pi_n$, i.e., the number of unique elements among $\theset {Z_1,\dotsc,Z_n }$, and take $\nUni_0 \defas 0$.  
For every $j,n \in \Nats$, let 
$\nmul_{jn} \defas \card{ \theset{ i \le n \st Z_i = \tilde Z_j } }$ denote the multiplicity of the $j$-th token to appear among the first $n$ elements.
Then $\nmul_n \defas (\nmul_{1n}, \nmul_{2n}, \dotsc)$ is a vector of counts for each token, and is necessarily a sequence of $\nUni_n$ positive integers terminated by an infinite sequence of zeros, and so we may identify the range of these random vectors with the space $\Nats^* \defas \bigcup_{n \in \Nats} \Nats^n$ of finite sequences of positive integers in the obvious way.    
For a sequence $n =(n_1,\dotsc,n_k,0,0,\dotsc)$, let $n^{+j}$ be the sequence where $n_j$ is incremented by $1$.

For every finite sequence $(n_1, \dots, n_k) \in \Nats^*$, we may define 
\[
\pi(n_1,\dotsc,n_k) \defas \Pr \of { \nUni_n = k, \nmul_{1n}=n_1, \dotsc, \nmul_{kn} = n_k }.
\] 
By exchangeability, it follows that $\pi$ is a symmetric function.  By construction, %
\[\label{eppfprop}
\text{$\pi(1) = 1$ and $\pi(n) = \sum_{j=1}^{k+1} \pi(n^{+j})$ for every $n \in \Nats^*$.}
\]
A symmetric function on $\Nats^*$ satisfying~\cref{eppfprop} is known as an \defn{exchangeable partition probability function} (EPPF) and can be seen to completely characterize the distribution of the combinatorial structure of $\process Z \Nats$.  (See~\citep{MR1481784} for more details.)  
In particular, it can be shown that the conditional distribution of $Z_{n+1}$ given $Z_{[n]}$ is
\[\label{eppfpred}
\sum_{j=1}^{\nUni_n}
\frac {\eppf(\nmul_n^{+j})}
        {\eppf(\nmul_n)}
   \, \delta_{\tilde Z_j} 
+
\frac {\eppf(\nmul_{n}^{+(\nUni_n+1)})}
        {\eppf(\nmul_n)}
   \,\mdrm.
\]
Because of this underlying urn scheme structure,
we will refer to any exchangeable sequences with nonatomic marginal distributions as a $\eppf$-scheme.  When \cref{eppfpred} holds, we will say that the $\eppf$-scheme has marginals $\mdrm$.
 
By de~Finetti's theorem, we know there is an a.s.\ tail-measurable random probability measure $\drm$ %
such that
\[
\process Z \Nats \given \drm \distiid \drm.
\]
(We say that $\drm$ is the random measure \defn{directing} the exchangeable sequence $\process Z \Nats$.)
In order to characterize $\drm$ further, let $P_i$ be the \as\ limiting relative frequency of $C_i$, i.e., define
\[
P_i 
\defas \lim_{n\to\infty}  \frac{ N_{in} } n\ \as,\quad i=1,2,\dotsc
\]
Thus $P_1$ is the long run frequency of the first token $\tilde Z_1$.
It is easy to see that the distribution of $\nprocess P \Nats i$ and $\process \nUni \Nats$ depends only on the EPPF $\eppf$ and not on $\mdrm$.  

With probability one, it holds that
\[\label{eq:drmchar}
\drm = \sum_{i=1}^\infty P_i\, \delta_{\tilde Z_i} + \Bigl(1-\sum_{i=1}^\infty P_i \Bigr) \, \mdrm \,.
\]
Note that $\process {\tilde Z} \Nats$ is an \iid-$\mdrm$ sequence, independent of $\process P \Nats$.

Another way of summarizing the combinatorial structure of $\process Z \Nats$ is by the arrival times $\initial {}$ of tokens, i.e., the random function $\initial {} : \Nats \to \Nats$ given by
\[
\initial j \defas \inf \, \theset { i \le j \st Z_i = Z_j },
\] 
i.e., $\initial j$ is the first time the token $Z_j$ appears among $Z_1,\dotsc,Z_j$.  Write $\initial {} \defas ( \initial 1, \initial 2, \dotsc)$. 

\begin{thm}\label{subsampling}
Let $Z$ be a $\pi$-scheme and let $\tau$ be defined as above.   Then there exists an \iid-$\mdrm$ sequence $\process U \Nats$, independent from $\tau$, such that $Z_n = U_{\tau_n}$ \as\ for every $n$.
\end{thm}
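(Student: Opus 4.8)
The plan is to build $(U_n)$ by placing the tokens at their first–appearance times and filling every remaining coordinate with a fresh independent $\mdrm$–draw. First I would record two structural facts. Since $Z_i=Z_j$ iff $\tau_i=\tau_j$, and since $\tau_n=\ft_k$ where $C_k$ is the block containing $n$, the arrival–time function $\tau$ and the partition $\Pi$ generate the same $\sigma$–algebra; in particular it suffices to produce $(U_n)$ independent of $\Pi$. Second, I claim that the tokens $\gprocess{\tilde Z_i}{i \in \Nats}$ form an \iid-$\mdrm$ sequence independent of $\Pi$. This I would read off the predictive rule \cref{eppfpred}: conditionally on $Z_{[n]}$, the choice among ``repeat an existing token'' and ``start a new token'' is governed by probabilities depending only on the combinatorial count vector $\nmul_n$, while, conditionally on the event that $Z_{n+1}$ is a new token, its value is a fresh $\mdrm$–draw independent of the past. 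Inducting on $n$ then decouples the sequence of combinatorial decisions, which determines $\Pi$, from the token values, giving $\gprocess{\tilde Z_i}{i\in\Nats} \ind \Pi$ with the tokens \iid-$\mdrm$.

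With these facts in hand, I would enlarge the probability space (via a transfer argument, \cref{transfer}) to carry an \iid-$\mdrm$ sequence $\process W \Nats$ independent of $\process Z \Nats$, and hence of the pair $(\tau,\gprocess{\tilde Z_i}{i\in\Nats})$ jointly. Let $R \defas \theset{ \ft_i \st C_i \neq \emptyset }$ be the (random, $\sigma(\tau)$–measurable) set of first–appearance times, and define
\[
U_m \defas
\begin{cases}
\tilde Z_i & \text{if } m = \ft_i \in R, \\
W_m & \text{if } m \notin R.
\end{cases}
\]
Each $U_m$ is a well–defined random variable because $R$ and the enumeration $i \mapsto \ft_i$ are measurable functions of $\tau$. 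Since $\tau_n=\ft_k$ for the block $C_k$ containing $n$, and every element of $C_k$ carries the value $\tilde Z_k=Z_{\ft_k}$, we get $U_{\tau_n}=U_{\ft_k}=\tilde Z_k=Z_n$ for every $n$, almost surely.

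It then remains to check that $\process U \Nats$ is \iid-$\mdrm$ and independent of $\tau$, which I would establish by computing the conditional law of $\process U \Nats$ given $\cG \defas \sigma(\tau)=\sigma(\Pi)$. Given $\cG$, the set $R$ and the order–preserving bijection $R \to \Nats$, $\ft_i \mapsto i$, are fixed; the values $(U_m)_{m \in R}$ are a reindexing of the tokens, which by the claim above are \iid-$\mdrm$ conditionally on $\cG$, while the values $(U_m)_{m \notin R}$ equal the corresponding $W_m$, which are \iid-$\mdrm$ and, being independent of $\process Z \Nats$, remain \iid-$\mdrm$ conditionally on $\cG$ and independent of the tokens. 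Hence, conditionally on $\cG$, every coordinate $U_m$ is $\mdrm$–distributed and the coordinates are mutually independent; that is, the conditional law of $\process U \Nats$ is the product measure $\mdrm^{\otimes \Nats}$. As this conditional law does not depend on $\tau$, the sequence $\process U \Nats$ is \iid-$\mdrm$ and independent of $\tau$, as required.

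The step I expect to demand the most care is the claim that the tokens are independent of the \emph{whole} partition $\Pi$ (the excerpt only records their independence from the limiting frequencies $\process P \Nats$), together with the bookkeeping needed to see that reindexing a $\tau$–indexed \iid\ family onto the $\tau$–determined positions $R$ leaves the conditional law free of $\tau$. The enlargement of the space to realize $\process W \Nats$ is routine but should be stated explicitly.
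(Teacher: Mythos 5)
Your proposal is correct and is exactly the argument the paper has in mind: its proof is only the one-line sketch ``by an explicit construction and a transfer argument,'' and your construction (tokens placed at their first-appearance times, remaining coordinates filled by an independent \iid-$\mdrm$ sequence obtained via \cref{transfer}, then verifying the conditional law given $\sigma(\tau)=\sigma(\Pi)$ is $\mdrm^{\otimes\Nats}$) is precisely that explicit construction, with the needed independence of the tokens from the whole partition correctly extracted from the predictive rule \cref{eppfpred}.
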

\begin{proof}
By an explicit construction and a transfer argument.
\end{proof}

\subsection{Projections}
\label{ssec:proj}

We study a $\eppf$-scheme, where each new token is marked with an \iid\ Bernoulli random variable.  We will be interested in the distribution of the \as\ limiting relative frequency of these marks, as a function of the mean of these Bernoulli marks.  We will be especially interested in the limiting behavior as the mean of these marks converges to zero.  

Let $\process Z \Nats$ be a $\eppf$-scheme with marginals $\mdrm = \Uniform$. 
For every $\qq \in [0,1]$ and $n \in \Nats$, put $\bar Z_{\qq,n} \defas \delta_{Z_n} [0, \qq]$ so that, for every $\qq \in [0,1]$, the process
$(\bar Z_{\qq,n})_{n\in\Nats}$ is an exchangeable sequence in $\theset {0,1}$ directed by the random Bernoulli measure with mean 
\[\label{firstdefnofqp}
Q_\qq \defas \drm [0,\qq],
\]
i.e., conditioned on $Q_\qq$, 
the elements are independent Bernoulli random variables with mean $Q_\qq$.

The process $\nprocess Q {[0,1]} \qq$
can be taken to be the distribution function of the directing random measure $\drm$, 
and so we may choose a version of $Q$ so that $\qq \mapsto Q_\qq$ is monotonically increasing, continuous from the right with left limits, and satisfies
$Q_0 = 0$ and $Q_1 = 1$ surely.
By~\citep[][Prop.~1.4]{PSIP}, for every $\qq \in [0,1]$, we have that $Q_\qq$ is the \as\ limiting frequency of the event $Z_i \le \qq$ as $ i \to \infty$, i.e.,
\[\label{defnofQp}
Q_\qq = \lim_{n\to\infty} \frac{ \card {\theset{ j \le n \st Z_j \le \qq }}  } n\ \as%
\]
It is also clear from~\cref{eq:drmchar} that
\[\label{sumdefnofqp}
Q_\qq = \sum_{i=1}^\infty P_i \delta_{\tilde Z_i} [0,\qq] + \Bigl(1-\sum_{i=1}^\infty P_i \Bigr)\, \qq\, \ \as%
\]
Note that $\process {\tilde Z} \Nats$ is an \iid\ sequence of $\Uniform$ random variables, independent of $\process P \Nats$.
Moreover, the law of $Q$ is fully characterized by $\eppf$, and vice versa.

For $k \le n \in \NNInts$, $\qq \in (0,1]$, and $B\in\BorelSetsInt$,
define
\[\label{bkerneldef}
\bkernel_{n,k}(\qq,B) 
\defas \Pr \theset { Q_\qq \in B \given S_{\qq,n} = k }
=
\frac
{\int_B \pp^k (1-\pp)^{n-k} \,\Pr \theset{ Q_\qq \in \dee \pp }  }
{\int_{[0,1]} \pp^k(1-\pp)^{n-k} \,\Pr \theset{ Q_\qq \in \dee \pp }  } \ ,
\]
where $S_{\qq,n} = \sum_{j=1}^n \bar Z_{\qq,j}$
and
$\kernel (\qq,B) = \bkernel_{0,0}(\qq,B) = \Pr \theset { Q_\qq \in B}$.
Let $j\le m \in \NNInts$.  By Bayes rule, we have
\[\label{eq:bayeschar}
\bkernel_{m+n,j+k}(\qq,B) 
&=
\frac
{\int_B \pp^k (1-\pp)^{n-k} \,\bkernel_{m,j}(\qq,\dee \pp) }
{\int_{[0,1]} \pp^k(1-\pp)^{n-k} \,\bkernel_{m,j}(\qq,\dee \pp) } \ .
\]
\begin{thm}\label{kernellimit}
$\bkernel_{1,1}(q,\argdot) \to \Pr \theset { P_1 \in \argdot }$ weakly as $q\downto 0$.
\end{thm}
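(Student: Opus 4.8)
The plan is to recognize $\bkernel_{1,1}(\qq,\argdot)$ as the law of $Q_\qq$ size-biased by its own value, and then to exploit the representation \cref{sumdefnofqp} to show that this size-biased law collapses onto $P_1$ as $\qq \downto 0$. Taking $n=k=1$ in \cref{bkerneldef} gives $\bkernel_{1,1}(\qq,B) = \Pr\theset{Q_\qq \in B \given S_{\qq,1}=1}$, and since $S_{\qq,1} = \bar Z_{\qq,1} = 1(Z_1 \le \qq)$, this is precisely the conditional law of $Q_\qq$ given the event $\theset{Z_1 \le \qq}$. For $\qq>0$ this event has probability $\EE Q_\qq = \mdrm[0,\qq] = \qq > 0$, so the conditioning is well defined and preserves almost-sure identities.

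Next I would use that the first token satisfies $\tilde Z_1 = Z_1$ (the block $C_1$ contains $1$, so the defining index $\inf C_1 = 1$ in \cref{defnofM}), whence the conditioning event is $\theset{\tilde Z_1 \le \qq}$. Because $\process{\tilde Z}\Nats$ is an \iid\ $\Uniform$ sequence independent of $\process P \Nats$, conditioning on $\theset{\tilde Z_1 \le \qq}$ leaves the joint law of $\bigl(\process P \Nats,\, (\tilde Z_i)_{i\ge2}\bigr)$ unchanged while forcing $1(\tilde Z_1 \le \qq)=1$. Substituting into \cref{sumdefnofqp} then gives, under the conditional law,
\[
Q_\qq = P_1 + R_\qq, \qquad R_\qq \defas \sum_{i=2}^\infty P_i\, 1(\tilde Z_i \le \qq) + \Bigl(1 - \sum_{i=1}^\infty P_i\Bigr)\qq,
\]
where crucially $P_1$ retains its unconditional distribution $\sdist = \Pr\theset{P_1 \in \argdot}$ and $R_\qq \ge 0$.

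It then remains to show $R_\qq \to 0$ and to invoke Slutsky's theorem. Integrating term by term (legitimate as every summand is nonnegative) and using $\Pr\theset{\tilde Z_i \le \qq}=\qq$ together with the independence of $(\tilde Z_i)_{i\ge2}$ from $\process P \Nats$ yields $\EE R_\qq = \qq\,(1-\EE P_1) \le \qq$, so $R_\qq \to 0$ in $L^1$, hence in probability, as $\qq\downto0$. Since $Q_\qq = P_1 + R_\qq$ with $P_1 \dist \sdist$ held fixed and $R_\qq\to0$ in probability, Slutsky's theorem gives $Q_\qq \to P_1$ in distribution under the conditional law, which is exactly the assertion $\bkernel_{1,1}(\qq,\argdot)\to\Pr\theset{P_1\in\argdot}$ weakly. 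The one genuine delicacy, and the step I would handle most carefully, is the conditioning in the second paragraph: one must fix a version for which \cref{sumdefnofqp} holds almost surely and then use the independence of $\tilde Z_1$ from $\bigl(\process P \Nats,(\tilde Z_i)_{i\ge2}\bigr)$ to certify that conditioning on the positive-probability event $\theset{\tilde Z_1\le\qq}$ disturbs neither the law of $P_1$ nor that of the remainder $R_\qq$; the remaining first-moment bound and the Slutsky step are routine.
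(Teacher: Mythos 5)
Your proposal is correct and follows essentially the same route as the paper's proof: identify $\bkernel_{1,1}(q,\argdot)$ as the law of $Q_q$ conditioned on $\{Z_1\le q\}=\{\tilde Z_1\le q\}$, use the independence of $\tilde Z_1$ from $(\process P\Nats,(\tilde Z_i)_{i\ge 2})$ together with \eqref{sumdefnofqp} to write the conditional law as that of $P_1$ plus a nonnegative remainder, and show the remainder vanishes. The only (immaterial) difference is that the paper observes the remainder tends to $0$ almost surely, whereas you establish the $L^1$ bound $\EE R_q\le q$ and invoke Slutsky.
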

\begin{proof} Note that $Z_1 = \tilde Z_1$ a.s.  It follows from \cref{sumdefnofqp},
 that $Q_p$ has the same distribution on $\theset {\tilde Z_1 \le p}$ as $P_1 + \sum_{i=2}^\infty P_i \delta_{\tilde Z_i} [0,q] + \Bigl(1-\sum_{i=1}^\infty P_i \Bigr)\, q$, which is even independent of $\tilde Z_1$.  The latter quantity converges \as\ to $P_1$ as $q\downto 0$, and so its distribution converges to that of $P_1$.
\end{proof}

The preceding result suggests that we can extend $\bkernel_{n,k}$ from $(0,1]$ to $[0,1]$ in such a way that $\bkernel_{n,k}(0,\argdot)$ is defined whenever $\bkernel_{n,k}(q,\argdot)$ is defined for some (and then every) $q \in (0,1)$.

For $k \le n \in \Nats$, define
\[
\ord\kernel_{n,k}(B) \defas \Pr \theset { P_1 \in B \given  \card { ([n] \cap C_1 )} = k }.
\]
Then $\ord\kernel_{1,1} = \sdist$ is the distribution of $P_1$, the \as\ limiting frequency of the first token $\tilde Z_1$.
Bayes rule implies that
\[
\ord\kernel_{n,k}(B) = 
\frac { \int_B p^{k-1} (1-p)^{n-k} \,\sdist(\dee p) }
        { \int_{[0,1]} p^{k-1} (1-p)^{n-k} \,\sdist(\dee p) } \ , \quad B \in \BorelSetsInt.
\]
The next theorem extends $\bkernel$ as $q\downto 0$ and shows that the limiting behavior of $\bkernel$ is determined by $\sdist$:
\begin{lem}\label{limitchar}
Let $k \le n \in \NNInts$. 
As $q \downto 0$, 
\[\label{eq:thelimit}
\bkernel_{n,k}(q, \argdot)
\to 
\begin{cases}
\delta_0 ,
& \text{$k = 0$ or $n=0$,} 
\\
\ord\kernel_{n,k},
& \text{$k\ge 1$,}        
\end{cases}
\qquad \text{weakly.}
\]
\end{lem}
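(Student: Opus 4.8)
The plan is to write $\bkernel_{n,k}(q,\argdot)$ as a fixed continuous reweighting of a measure whose $q\downto 0$ limit is already understood, and then to invoke the elementary fact that if $\mu_q\to\mu$ weakly, $w\ge 0$ is bounded and continuous, and $\int w\,\dee\mu>0$, then the normalized reweightings $w\mu_q/\!\int w\,\dee\mu_q$ converge weakly to $w\mu/\!\int w\,\dee\mu$ (apply weak convergence to $g\cdot w$ and to $w$ separately). The subtlety that dictates the whole argument is that the obvious base measure, $\kernel(q,\argdot)=\Pr\theset{Q_\qq\in\argdot}$ from \cref{bkerneldef}, converges to the degenerate $\delta_0$; for $k\ge 1$ the weight $\pp^{k}(1-\pp)^{n-k}$ vanishes at $\pp=0$, so the defining ratio becomes $0/0$ in the limit and carries no information. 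The remedy is \cref{kernellimit}, which supplies a nondegenerate base: $\bkernel_{1,1}(q,\argdot)\to\sdist$ weakly, where $\sdist$ is the law of $P_1$.

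For $k\ge 1$ I would rewrite $\bkernel_{n,k}$ through this base using the Bayes recursion \cref{eq:bayeschar} with $m=j=1$ (legitimate since $1\le 1$ and $k-1\le n-1$):
\[
\bkernel_{n,k}(q,B)=\frac{\int_B\pp^{k-1}(1-\pp)^{n-k}\,\bkernel_{1,1}(q,\dee\pp)}{\int_{[0,1]}\pp^{k-1}(1-\pp)^{n-k}\,\bkernel_{1,1}(q,\dee\pp)}.
\]
Now the exponent $k-1$ is nonnegative, so $\pp\mapsto\pp^{k-1}(1-\pp)^{n-k}$ is bounded and continuous on the closed interval $[0,1]$, and so is its product with any bounded continuous $g$. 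Applying \cref{kernellimit} to numerator and denominator, the ratio converges to $\frac{\int g(\pp)\pp^{k-1}(1-\pp)^{n-k}\sdist(\dee\pp)}{\int_{[0,1]}\pp^{k-1}(1-\pp)^{n-k}\sdist(\dee\pp)}$, which by the Bayes formula for $\ord\kernel_{n,k}$ displayed just before the lemma equals $\int g\,\dee\ord\kernel_{n,k}$. Hence $\bkernel_{n,k}(q,\argdot)\to\ord\kernel_{n,k}$ weakly.

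For $k=0$ (which subsumes $n=0$) no renormalization is needed, since the weight in \cref{bkerneldef} is $(1-\pp)^n$, which is continuous and equal to $1$ at $\pp=0$: because $Q$ is right-continuous with $Q_0=0$ surely, $Q_\qq\to 0$ almost surely and $\kernel(q,\argdot)\to\delta_0$, and the $(1-\pp)^n$-reweighting preserves this limit, giving $\bkernel_{n,0}(q,\argdot)\to\delta_0$. The one genuine obstacle is the hypothesis $\int_{[0,1]}\pp^{k-1}(1-\pp)^{n-k}\sdist(\dee\pp)>0$ needed in the $k\ge 1$ case, both to justify passing to the ratio of limits and to guarantee that $\ord\kernel_{n,k}$ is an honest probability measure. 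I would resolve it with the identity $\int_{[0,1]}\pp^{k-1}(1-\pp)^{n-k}\sdist(\dee\pp)=\binom{n-1}{k-1}^{-1}\Pr\theset{\card{([n]\cap C_1)}=k}$, valid because, conditionally on $P_1$, the block $C_1$ acquires $\Bin(n-1,P_1)$ of the indices $2,\dots,n$ while always containing $1$. This shows the normalizer is strictly positive exactly when $\ord\kernel_{n,k}$, equivalently $\bkernel_{n,k}(q,\argdot)$ for small $q$, is defined, which is the standing assumption.
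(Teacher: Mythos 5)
Your proof is correct and follows essentially the same route as the paper's: reduce $\bkernel_{n,k}$ to a bounded-continuous reweighting of $\bkernel_{1,1}$ via \cref{eq:bayeschar} with $m=j=1$, invoke \cref{kernellimit}, and handle $k=0$ directly from $Q_q\to 0$ a.s. The only addition is your explicit verification that the normalizer $\int p^{k-1}(1-p)^{n-k}\,\sdist(\dee p)$ is strictly positive, a detail the paper leaves implicit but which your binomial identity settles cleanly.
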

\begin{proof}
From \cref{sumdefnofqp}, it is clear that $Q_q \to 0$ as $q \downto 0$ \as, and so  
$\Pr {\theset {Q_q \in \argdot }} = \bkernel_{0,0}(q,\argdot )$ converges weakly to $\delta_0$. 
To see that this holds when $k=0$, note that $\bkernel_{n,0}(q,A) \to \bkernel_{0,0}(q,A)$ weakly as $q \downto 0$
because $\Pr \theset {S_{q,n} = 0} \to 1$ as $q \downto 0$.

Now assume $k \ge 1$.  The result follows from \cref{kernellimit}, the identity \cref{eq:bayeschar} with $m=j=1$, and the boundedness and continuity of the map $p \mapsto p^{k-1} (1-p)^n$ on $[0,1]$.
\end{proof}

\cref{limitchar} implies that we may define
\[
\bkernel_{n,k}(0,\argdot) = \lim_{q \downto 0} \bkernel_{n,k}(q,\argdot)
\]
where the limit is understood in terms of weak convergence (not setwise).
Recall that $\PFF {n}(p) = (1-p)^n$.
We can give the following characterization of $\PFF {n}\sdist$ in terms of $\process P \Nats$ and $\process \nUni \Nats$:
\begin{lem}\label{charofmu}
For every $n \in \Nats$ and $B \in \BorelSetsInt$, we have
\[
(\PFF {n-1 }\sdist) (B) = \Pr \theset{ P_{\nUni_n} \in B \wedge \nUni_n > \nUni_{n-1} }.
\]
\end{lem}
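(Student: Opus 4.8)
The plan is to read $(\PFF{n-1}\sdist)(B) = \int_B (1-p)^{n-1}\,\sdist(\dee p)$ probabilistically and match it to the stated combinatorial event by a single exchangeability argument followed by a de~Finetti conditioning. The first step reduces the right-hand event to a statement about the \emph{first} token. Observe that $\theset{\nUni_n > \nUni_{n-1}}$ is exactly the event that $Z_n$ differs from each of $Z_1,\dotsc,Z_{n-1}$, i.e.\ that $Z_n$ is a fresh token, and that on this event the token appearing at position $n$ is $\tilde Z_{\nUni_n}$, whose limiting frequency is $P_{\nUni_n}$. For a sequence $w$, write $F_j(w) \defas \lim_m m^{-1}\card{\theset{i\le m \st w_i = w_j}}$ for the limiting frequency of the value at position $j$; this limit is unchanged by any permutation moving only finitely many indices, so $F_j(Z\circ\sigma)=F_{\sigma(j)}(Z)$ for every such $\sigma$.

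Choosing a permutation $\sigma$ of $\Nats$ that fixes $\theset{n+1,n+2,\dotsc}$, sends $1\mapsto n$, and maps $\theset{2,\dotsc,n}$ onto $\theset{1,\dotsc,n-1}$, exchangeability gives $\process Z \Nats \equaldist (Z_{\sigma(j)})_{j\in\Nats}$. Applying both sides to the functional $E(w)\defas 1\theset{w_1\ne w_2,\dotsc,w_1\ne w_n}\,1_B(F_1(w))$, and using $F_1(Z)=P_1$ together with $F_1(Z\circ\sigma)=F_n(Z)=P_{\nUni_n}$ on $\theset{\nUni_n>\nUni_{n-1}}$, I obtain
\[
\Pr\theset{\nUni_n > \nUni_{n-1},\, P_{\nUni_n}\in B}
= \Pr\theset{Z_1 \ne Z_2,\dotsc,Z_1\ne Z_n,\ P_1 \in B}.
\]

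The second step evaluates the right-hand side by conditioning on the directing measure $\drm$. Since $\process Z \Nats$ is \iid-$\drm$ given $\drm$, and since the de~Finetti representation \cref{eq:drmchar} (with $\mdrm$ nonatomic) yields $P_1 = \drm\theset{Z_1}$ — the $\drm$-mass of the atom at $Z_1$, which is $0$ precisely when $Z_1$ falls in the dust, consistent with $P_1=0$ there — the events $\theset{Z_i \ne Z_1}$, for $i=2,\dotsc,n$, are conditionally independent given $(\drm,Z_1)$, each of conditional probability $1-\drm\theset{Z_1}=1-P_1$. Hence the conditional probability of $\theset{Z_1\ne Z_2,\dotsc,Z_1\ne Z_n}$ is $(1-P_1)^{n-1}$, and integrating against the law $\sdist$ of $P_1$ gives
\[
\Pr\theset{Z_1\ne Z_2,\dotsc,Z_1\ne Z_n,\ P_1\in B}
= \EE\oooof{1_B(P_1)(1-P_1)^{n-1}}
= \int_B (1-p)^{n-1}\,\sdist(\dee p),
\]
which is exactly $(\PFF{n-1}\sdist)(B)$. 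Combining the two displays proves the claim; the case $n=1$ is the trivial boundary, where the product over an empty index set is $1$ and the event $\theset{\nUni_1>\nUni_0}$ has probability one.

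I expect the only genuine care to be in the first step: checking that $F_j$ is invariant under finite permutations and, above all, bookkeeping the permuted event so that $F_n(Z)$ is identified with $P_{\nUni_n}$ exactly on $\theset{\nUni_n>\nUni_{n-1}}$ (off this event the indicator vanishes, so the value of $F_n$ there is immaterial). The conditioning in the second step is routine once $P_1=\drm\theset{Z_1}$ is in hand.
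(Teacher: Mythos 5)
Your proof is correct and follows essentially the same route as the paper's: exchangeability (moving the "fresh token" position between index $1$ and index $n$) combined with the conditional computation $\Pr\{Z_1\ne Z_2,\dotsc,Z_1\ne Z_n \mid P_1\}=(1-P_1)^{n-1}$ via the de~Finetti/paintbox representation. Your write-up is somewhat more careful than the paper's (which states the event identity a bit loosely as $\{\card{(C_{\nUni_n}\cap[n])}=1\}=\{\nUni_n>\nUni_{n-1}\}$ rather than phrasing it in terms of the block containing $n$), but the argument is the same.
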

\begin{proof}
We have 
$\Pr^{P_1} \oF{ C_1 \cap [n] = 1 \wedge P_1 \in B } = (1-P_1)^{n-1} \Ind_{B} (P_1)$.  
The expectation of the latter is $\PFF {n-1 }\sdist B$.
But then, by exchangeability,
$
\Pr \theset { C_1 \cap [n] = 1 \wedge P_1 \in B }
=
\Pr \theset { C_{\nUni_n} \cap [n] = 1 \wedge P_{\nUni_n} \in B }
$ and $\theset {C_{\nUni_n} \cap [n] = 1} = \theset{\nUni_n > \nUni_{n-1}}$.
\end{proof}

Define $\Delta_n \defas \Pr \theset { \nUni_n > \nUni_{n-1} }$.  From the proceeding result, we have $\Delta_n = \sdist \PFF {n-1}$.
\section{Exchangeable sequences of Bernoulli processes}

For a measure $\nR$ on $\bspace$,
let $\countproc {\nR}$ be the simple measure 
on $\bspace \times (0,\infty)$
given by $\nR = \sum_{s \in \bspace} \delta_{(s,\nR\theset s)}$.  It can be shown that the map $\nR \mapsto \countproc {\nR}$ is measurable.
Let $\charber x p  \defas 1-p+p\,e^{-x }$ be the moment generating function of a Bernoulli random variable with mean $p$ evaluated at $-x$, and, given a measurable function $f$ on $\bspace$, write $\charbersym f$ for the map $(s,p) \mapsto \charber {f(s)} p$.  

\begin{thm}\label{exchjointlaw}
Let $g,f_1,\dotsc,f_n \ge 0$ be measurable, and put $g'(s,p) = p\, g(s)$. 
Let $\rH$ be a completely random hazard measure with nonrandom nonatomic component $\nrm$, fixed atoms $\Atoms$, and \Levy\ measure $\eta$ on $\bspace \times (0,1]$.
Conditioned on $\rH$, let $X_1,\dotsc,X_n$ be independent Bernoulli processes with mean $\rH$.  Then
\[
\log \EE \exp \Oof{ - \rH g - \sum_{j=1}^n X_j f_j } 
&= - \nrm (g +  \sum_{j=1}^{n} (1- e^{-f_j})) - \eta (1- e^{-g'} \prod_{j=1}^n \charbersym {f_j} ) 
\\&\quad \ + \sum_{s \in \Atoms} \log \Ooof{ (\delta_s \otimes \Pr \oF { \rH \theset s }) (e^{-g'} \textstyle\prod_{j=1}^n \charbersym {f_j}) }.
\]
\end{thm}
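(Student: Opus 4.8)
The plan is to condition on $\rH$, factor the conditional Laplace transform of $X_1,\dotsc,X_n$ across the nonatomic and atomic parts of $\rH$, and then integrate out $\rH$ using the mutual independence of its three components. First I would write
\[
\EE \exp\Of{-\rH g - \sum_{j=1}^n X_j f_j}
= \EE\oF{ e^{-\rH g}\, \EE\oF{ \exp\Of{-\textstyle\sum_{j=1}^n X_j f_j} \given \rH } },
\]
which is valid since $\rH g$ is $\sigma(\rH)$-measurable. Conditioned on $\rH$, the $X_j$ are independent Bernoulli processes with mean $\rH$, so the inner expectation is $\prod_{j=1}^n \EE\oF{e^{-X_j f_j}\given\rH}$. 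The ordinary and \fixedcomp\ components of $\rH$ are purely atomic, so the only nonatomic part of $\rH$ is the deterministic measure $\nrm$; the Bernoulli characteristic functional displayed just before \cref{thm:bernoullichar} then gives
\[
\EE\oF{e^{-X_j f_j}\given\rH}
= \exp\Of{-\nrm(1-e^{-f_j})}\,\prod_{s}\charber{f_j(s)}{\rH\theset s},
\]
the product running over the (random) atoms $s$ of $\rH$.

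Writing $F \defas e^{-g'}\prod_{j=1}^n\charbersym{f_j}$ for the integrand on the right-hand side (a function on $\bspace\times(0,1]$ with $0\le F\le 1$, since $g,f_j\ge 0$), I would multiply the displays above over $j$ and by $e^{-\rH g}=e^{-\nrm g}\prod_s e^{-\rH\theset s\, g(s)}$, and collect terms using $g'(s,\pp)=\pp\, g(s)$ to obtain
\[
e^{-\rH g}\prod_{j=1}^n \EE\oF{e^{-X_j f_j}\given\rH}
= \exp\Of{-\nrm\ooof{g + \textstyle\sum_{j=1}^n(1-e^{-f_j})}}\,\prod_{s} F(s,\rH\theset s).
\]
The deterministic nonatomic factor already supplies the first term of the claim, and each remaining factor is $F$ evaluated at an atom $s$ of $\rH$ and its mass $\rH\theset s$.

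Finally I would take the expectation over $\rH$. By \cref{thm:crmrep} and the decomposition following it, the atoms of $\rH$ split into the fixed atoms $\Atoms$, whose masses $\rH\theset s$ are independent, together with the ordinary atoms, which form a Poisson process with intensity $\eta$ and are independent of the fixed masses. Independence across the fixed atoms gives $\prod_{s\in\Atoms}\EE\oF{F(s,\rH\theset s)}=\prod_{s\in\Atoms}(\delta_s\otimes\Pr\oF{\rH\theset s})(F)$, which yields the third term after taking logarithms. For the ordinary atoms, applying the multiplicative form of Campbell's theorem (\cref{campbells})---with exponent $-\log F\ge 0$, legitimate because $0\le F\le1$---gives $\exp\Of{-\eta(1-F)}$; here $\eta$ is nonatomic, since $\eta(\theset s\times(0,\infty))=0$ for every $s$ by \cref{thm:crmrep}. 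Combining the three independent contributions and taking logarithms yields the stated identity.

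The main obstacle is getting the decomposition exactly right: one must keep straight that, conditioned on $\rH$, each Bernoulli process $X_j$ sees the deterministic nonatomic mass $\nrm$ as a continuous Poisson intensity (contributing $-\nrm(1-e^{-f_j})$), whereas it sees every atom of $\rH$---whether arising from the \fixedcomp\ or the ordinary component of $\rH$---as a single Bernoulli trial with success probability $\rH\theset s$. Conflating these two roles would produce the wrong functional. The only other point requiring care is the nonatomicity of the intensity $\eta$, needed to invoke \cref{campbells} in its multiplicative form, which follows from the property $\eta(\theset s\times(0,\infty))=0$ recorded in \cref{thm:crmrep}.
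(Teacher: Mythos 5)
Your proposal is correct and follows essentially the same route as the paper: condition on $\rH$, apply the Bernoulli characteristic functional to get the $-\nrm(1-e^{-f_j})$ terms plus a product of $\charbersym{f_j}$ over the atoms of $\rH$, then integrate out $\rH$ via Campbell's theorem for the ordinary (Poisson) atoms and independence across the fixed atoms. The paper merely phrases the atomic product as $\countproc{\rH}\log\charbersym{f_j}$ in the exponent rather than as your function $F$, which is the same computation.
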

\begin{proof}
We have
\[
\EE^{\rH }[ \exp \Oof {- X_j f_j } ] = \exp \Ooof { - \nrm(1 - e^{-f_j}) + 
                          \countproc {\rH} \log \charbersym {f_j}  }.
\]
It follows from the chain rule of conditional expectation and then
\cref{thm:crmrep}
that
\begin{multline}\label{maineq123}
\EE \exp \Oof{ - \rH g - \textstyle\sum_{j=1}^n X_j f_j } 
= \EE \oooF { \exp \Oof { - \rH g } \prod_{j=1}^n \EE^{\rH}[ \exp \Oof {- X_j f_j } ] }
\\
= \EE[ \exp \Oof { - \nrm (g +  \textstyle\sum_{j=1}^{n} (1- e^{-f_j})) - \countproc {\rH} (g' - \log \prod_{j=1}^n \charbersym {f_j} ) } ].
\end{multline}
Let $\rH_\Atoms \defas \rH(\argdot \cap \Atoms)$.
Then by Campbell's theorem,
\[\label{capm234}
\EE[ \exp \Oof { - \countproc {\rH-\rH_\Atoms} (g' - \log \textstyle\prod_{j=1}^n \charbersym {f_j} ) } ] 
= \exp \Ooof { - \eta (1- e^{-g'} \textstyle\prod_{j=1}^n \charbersym {f_j} ) }.
\]
On the other hand, by complete randomness, $\rH_\Atoms$ and $\rH-\rH_\Atoms$ are independent and
\[
\EE[ \exp \Oof { - \countproc {\rH_\Atoms} (g' - \log \textstyle\prod_{j=1}^n \charbersym {f_j} ) } ]
= \prod_{s \in \Atoms} \Pr \oF { \rH \theset s } (e^{-g'} \textstyle\prod_{j=1}^n \charbersym {f_j}),
\]
completing the proof.
\end{proof}

We may write $\eta = \lbm \otimes \lkernel$ for some measure $\lbm$ on $\bspace$ and kernel $\lkernel$ from $\bspace$ to $(0,1]$.
In the case that $\nrm$ is absolutely continuous with respect to $\lbm$, we can give a unified characterization of the ordinary components of $X_1,\dotsc,X_n$.

\begin{thm}\label{alternativeexchjointlaw}
Let $\lbm \otimes \lkernel$ be a disintegration of $\eta$,
assume that $\nrm \ll \lbm$ and let $\Delta$ satisfy $\nrm = \Delta \lbm$.
Then
\[
&-\textstyle\sum_{j=1}^{n} \nrm (1- e^{-f_j}) 
      - \eta ( 1 - \prod_{{j}=1}^{n} \charbersym {f_j} )
\\
&=
n(\lbm \otimes (\PF 1 1 \lkernel + \Delta \delta_0)    
    \Bin^{(n-1)}) (s,j \mapsto 
      \frac 1 {j+1} \sum_{\substack{J \subseteq [n]\\\card {J} = j+1}} {n \choose j+1}\inv [1 - e^{-f_J(s)} ] )
\]
\end{thm}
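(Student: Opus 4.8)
The plan is to reduce the claimed identity to a pointwise-in-$s$ statement by inserting the disintegrations $\eta = \lbm \otimes \lkernel$ and $\nrm = \Delta\,\lbm$, so that both sides become integrals against $\lbm(\dee s)$ of expressions depending only on the numbers $e^{-f_j(s)}$, on $\lkernel(s,\argdot)$, and on $\Delta(s)$. Writing $f_J \defas \sum_{j \in J} f_j$ and recalling that $\charber{f_j(s)}{p} = 1 - p\,(1 - e^{-f_j(s)})$, the whole computation rests on the elementary identity
\[
1 - \prod_{j=1}^{n} \charber{f_j(s)}{p} = \sum_{\emptyset \ne J \subseteq [n]} \oof{1 - e^{-f_J(s)}}\, p^{\card J}\,(1-p)^{n - \card J},
\]
valid for every $p \in [0,1]$. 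First I would prove this by applying the product expansion $\sum_{J \subseteq [n]} \prod_{j \in J} x_j \cdot y^{\,n - \card J} = \prod_{j=1}^n (x_j + y)$ twice, once with $(x_j, y) = (p, 1-p)$ and once with $(x_j, y) = (p\,e^{-f_j(s)}, 1-p)$, and subtracting; the two $J = \emptyset$ contributions, both equal to $(1-p)^n$, cancel. Conceptually this is the Campbell-type statement (cf.\ \cref{campbells,exchjointlaw}) that the shared ordinary component splits as a superposition of independent Poisson families indexed by the subset $J$ of the $n$ processes in which a given atom appears.

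Integrating the identity above against $\lkernel(s,\dee p)$, the term $\eta\oof{1 - \prod_{j} \charbersym{f_j}}$ becomes a sum over nonempty $J$ which, by the symmetry of the construction, depends on $J$ only through $k \defas \card J$. Grouping the $\binom nk$ subsets of each fixed size yields, for each $s$,
\[
\int_{(0,1]} \oooof{1 - \textstyle\prod_{j=1}^n \charber{f_j(s)}{p}}\,\lkernel(s,\dee p) = \sum_{k=1}^{n} \oooof{\binom nk\inv \sum_{\card J = k} \oof{1 - e^{-f_J(s)}}} \binom nk \int_{(0,1]} p^{k}(1-p)^{n-k}\, \lkernel(s,\dee p),
\]
where the first bracket is exactly the average of $1 - e^{-f_J(s)}$ over size-$k$ subsets appearing on the right-hand side of the theorem.

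The last step repackages the binomial weight. The plan is to use the size-biasing identity
\[
\binom nk p^{k}(1-p)^{n-k} = \frac nk\, p\, \binom{n-1}{k-1} p^{k-1}(1-p)^{n-k},
\]
whose factor $\binom{n-1}{k-1}p^{k-1}(1-p)^{n-k}$ is the $\Bin^{(n-1)}(p,\argdot)$ mass at $k-1$ and whose factor $p\,\lkernel = \PF 1 1 \lkernel$; together with the reindexing $j = k-1$ this turns the $k$-sum into $n\,\oof{\lbm \otimes (\PF 1 1 \lkernel)\,\Bin^{(n-1)}}$ applied to $(s,j)\mapsto \tfrac1{j+1}\binom n{j+1}\inv \sum_{\card J = j+1}(1 - e^{-f_J(s)})$. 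It then remains to absorb the \nonrandomcomp\ term $\sum_{j}\nrm(1-e^{-f_j}) = \int \Delta(s)\sum_{i=1}^n (1-e^{-f_i(s)})\,\lbm(\dee s)$: since $\Bin^{(n-1)}(0,\argdot) = \delta_0$, the atom $\Delta\,\delta_0$ feeds only the $j=0$ (equivalently $k=1$, singleton-$J$) term, and the prefactor $n\cdot\tfrac11\binom n1\inv = 1$ reproduces it exactly. Adding the two pieces gives the stated right-hand side.

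The hard part will be this bookkeeping that unifies the ordinary and \nonrandomcomp\ components under the single measure $\PF 1 1 \lkernel + \Delta\,\delta_0$: one must verify that the size-biasing reindexing is consistent, so that the universal factor $n$ and the weights $\tfrac1{j+1}\binom n{j+1}\inv$ emerge uniformly, and that the $\delta_0$-atom, passed through $\Bin^{(n-1)}$, lands precisely on the singleton subsets and matches the dust contribution. One should also track the overall sign carefully, since both grouped terms enter the log-Laplace functional of \cref{exchjointlaw} negatively.
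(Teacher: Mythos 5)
Your argument is correct, and in fact the paper states \cref{alternativeexchjointlaw} without proof, so there is nothing to compare against directly; the nearest thing in the paper is the derivation of the identity for $n(1-\Lambda_f^*(s,1))$ preceding \cref{maindefinetti}, which reaches essentially the same combinatorial content probabilistically (conditioning on $N_{1,n}$ and invoking exchangeability of the indicator vector), whereas your purely algebraic route--expanding $\prod_j\charber{f_j(s)}{p}$ over subsets $J\subseteq[n]$, grouping by $\card J$, and size-biasing $\binom nk p^k(1-p)^{n-k}=\tfrac nk\,p\,\binom{n-1}{k-1}p^{k-1}(1-p)^{n-k}$ to absorb one factor of $p$ into $\PF 11\lkernel$--is cleaner and verifies line by line, including the observation that $\Bin^{(n-1)}(0,\argdot)=\delta_0$ sends the $\Delta\,\delta_0$ atom to $j=0$ where the weight $n\cdot\tfrac11\binom n1\inv=1$ exactly reproduces $\sum_j\nrm(1-e^{-f_j})$.

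The one point you should not leave as "track the sign carefully": your computation establishes
\[
\textstyle\sum_{j=1}^{n} \nrm (1- e^{-f_j}) + \eta \ooof{ 1 - \prod_{{j}=1}^{n} \charbersym {f_j} }
= n\ooof{\lbm \otimes (\PF 1 1 \lkernel + \Delta \delta_0)\Bin^{(n-1)}}\ooof{\cdots},
\]
whereas the theorem as printed equates the \emph{negative} of the left-hand side to the same right-hand side. Since for $f_j\ge 0$ the printed left-hand side is $\le 0$ and the right-hand side is $\ge 0$, the displayed equality cannot hold except trivially; this is a sign typo in the statement (consistent with the intended use in \cref{maindefinetti}, where both terms enter the log-Laplace functional of \cref{exchjointlaw} with minus signs). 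State explicitly that you prove the identity with the right-hand side negated. A minor wording issue: the summand $1-e^{-f_J(s)}$ does not "depend on $J$ only through $\card J$"---only the weight $p^{\card J}(1-p)^{n-\card J}$ does---though your displayed grouping is correct as written.
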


The partial sums $S_n \defas X_1+ \dotsm + X_n$ are key quantities when studying the conditional distribution of $\rH$.  The next result characterizes the joint law of $S_n$ and $H$.  Write $\ord \rH \defas \rH( \argdot \setminus \Atoms)$ and $\ord S_n \defas S_n (\argdot \setminus \Atoms)$,
and let $\mathrm{Bin}(n,p)$ denote the Binomial distribution on $\theset{0,1,\dotsc,n}$ with mean $np$ and variance $np(1-p)$.
For every $r \le n \in \NNInts$, 
define $\PF {n} {r} : [0,1] \to [0,1]$ by
$\PF {n} {r} (p) \defas p^{r} (1-p)^{n-r}.$

\begin{cor}\label{exchjointlawofS}
Let $f,g \ge 0$ be measurable, 
let $f''(s,p,k)=f'(s,k)=k\,f(s)$ and $g''(s,p,k)=g'(s,p) = p\, g(s)$,
and let $\Bin^{(n)}(p,\argdot) = \Bin(n,p)$.
Then
\[\label{jointlaweqnofS}
\log \EE e^{ - \ord\rH g - \ord S_n f } 
&=  - \nrm (g + n\,(1- e^{-f})) - \eta (1- e^{-g'} \charbersym{f}^n ) 
\\&=  - \nrm (g + n\,(1- e^{-f})) - (\lbm \otimes (\lkernel \otimes \Bin^{(n)})) (1- e^{-g'' - f''} ) ,
\]
where
$\lbm \otimes \lkernel$ is a disintegration of $\eta$.
In particular, for $h\ge 0$ measurable, 
\[
\EE e^{ - \countproc{\ord S_n} h } 
= \exp \Oof { - (n \nrm \otimes \delta_1
                    + \lbm \otimes \lkernel \Bin^{(n)}_{[n]} )
                     (1- e^{-h} ) },
\]
where $\Bin^{(n)}_A(p,\argdot)$ is the restriction of the measure $\Bin{(n,p)}$ to the set $A$.
\end{cor}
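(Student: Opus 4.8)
The plan is to derive the two equalities of \eqref{jointlaweqnofS} directly from \cref{exchjointlaw}, and then to obtain the ``in particular'' statement by recognizing $\countproc{\ord S_n}$ as a Poisson process and computing its Laplace functional by conditioning on $\rH$. Throughout, passing to ordinary components amounts to replacing the test functions by ones that vanish on the fixed atoms $\Atoms$, which kills the fixed-atom contribution.

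For the first equality, I would specialize \cref{exchjointlaw} to $f_1 = \dotsb = f_n = f\,\Ind_{\bspace\setminus\Atoms}$ and replace the function $g$ there by $g\,\Ind_{\bspace\setminus\Atoms}$. Because $\nrm$ is nonatomic and $\Atoms$ is countable, the indicator leaves the $\nrm$-terms unchanged, so the first summand is $-\nrm(g + n(1-e^{-f}))$; because $\eta$ is carried by $(\bspace\setminus\Atoms)\times(0,1]$ and $\prod_{j=1}^n \charbersym{f_j} = \charbersym{f}^n$ there, the second summand is $-\eta(1 - e^{-g'}\charbersym{f}^n)$. The only check is that the fixed-atom sum vanishes: at each $s\in\Atoms$ the test functions are $0$, so $\charber{0}{p}=1$ and $e^{-g'(s,p)}=1$, whence each factor $(\delta_s\otimes\Pr\oF{\rH\theset s})(e^{-g'}\prod_{j}\charbersym{f_j})$ equals $1$ and contributes $\log 1=0$. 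Since $\rH(g\,\Ind_{\bspace\setminus\Atoms}) = \ord\rH g$ and $S_n(f\,\Ind_{\bspace\setminus\Atoms}) = \ord S_n f$, this is the first line.

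For the second equality I would expand the Bernoulli moment generating function by the binomial theorem, $\charber{f(s)}{p}^n = (1-p+p\,e^{-f(s)})^n = \sum_{k=0}^n \binom{n}{k} p^k(1-p)^{n-k} e^{-kf(s)} = \int e^{-k f(s)}\,\Bin^{(n)}(p,\dee k)$, so that $e^{-g'(s,p)}\charber{f(s)}{p}^n = \int e^{-g''(s,p,k)-f''(s,p,k)}\,\Bin^{(n)}(p,\dee k)$. Using that $\Bin^{(n)}(p,\argdot)$ is a probability measure to write $1 = \int \Bin^{(n)}(p,\dee k)$, the integrand $1 - e^{-g'}\charbersym{f}^n$ becomes $\int(1 - e^{-g''-f''})\,\Bin^{(n)}(p,\dee k)$; integrating against the disintegration $\eta = \lbm\otimes\lkernel$ turns $\eta(1 - e^{-g'}\charbersym{f}^n)$ into $(\lbm\otimes(\lkernel\otimes\Bin^{(n)}))(1 - e^{-g''-f''})$, which is the second line.

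For the ``in particular'' statement I would compute the Laplace functional of $\countproc{\ord S_n}$ by conditioning on $\rH$, exactly as in the proof of \cref{exchjointlaw}. The ordinary component splits into two independent pieces. The nonatomic part $\nrm$ contributes to each $\ord X_j$ an independent Poisson process with intensity $\nrm$; as these $n$ processes have a.s.\ disjoint atom sets, $\ord S_n$ restricted here is Poisson with intensity $n\nrm$ and every atom has mass $1$, so $\countproc{\ord S_n}$ sees it as $n\,\nrm\otimes\delta_1$ and \cref{campbells} contributes the factor $\exp\Oof{-(n\,\nrm\otimes\delta_1)(1-e^{-h})}$. At each Poisson atom $(s,p)$ of $\rH$ the count $\ord S_n\theset s$ is $\Bin(n,p)$-distributed, and such an atom contributes $\delta_{(s,k)}$ to $\countproc{\ord S_n}$ when the count is $k\ge 1$ and nothing when it is $0$; hence the conditional factor attached to $(s,p)$ is $1 - \int(1-e^{-h(s,k)})\,\Bin^{(n)}_{[n]}(p,\dee k)$, where the restriction to $[n]$ arises precisely because $\theset{k=0}$ produces no atom. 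Campbell's theorem applied to $\eta=\lbm\otimes\lkernel$ then gives the factor $\exp\Oof{-(\lbm\otimes\lkernel\,\Bin^{(n)}_{[n]})(1-e^{-h})}$, and multiplying the two independent factors yields the claim. The main obstacle is this bookkeeping of masses: correctly excluding the empty ($k=0$) atoms so that $\Bin^{(n)}$ is replaced by its restriction $\Bin^{(n)}_{[n]}$, and justifying that the $n$ Poisson pieces arising from $\nrm$ almost surely never collide; the remainder is the same conditional Campbell computation already used for \cref{exchjointlaw}.
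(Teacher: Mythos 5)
Your proposal is correct, and for the two displayed equalities it is essentially the paper's proof: the first line is \cref{exchjointlaw} with $f_j = f$ (your explicit handling of the restriction to $\bspace\setminus\Atoms$, showing each fixed-atom factor equals $1$, just makes precise what the paper leaves implicit in passing to $\ord\rH$ and $\ord S_n$), and the second line is the same binomial expansion of $\charbersym{f}^n$ into $\Bin^{(n)}(p,\argdot)$. The only real divergence is in the final claim: the paper obtains it by setting $g=0$ in the second line, observing that $f''$ vanishes at $k=0$ and is independent of $p$ so the intensity collapses to $n\nrm\otimes\delta_1+\lbm\otimes\lkernel\Bin^{(n)}_{[n]}$, and then invoking \cref{thm:crmrep} to recognize $\countproc{\ord S_n}$ as a Poisson process whose intensity is identified by that Laplace functional. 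You instead recompute the Laplace functional of $\countproc{\ord S_n}$ from scratch by conditioning on $\rH$, splitting the diffuse part (whose $n$ independent Poisson copies a.s.\ do not collide, giving $n\nrm\otimes\delta_1$) from the marked atoms of $\eta$ (where the $k=0$ outcome contributes no point, forcing the restriction to $[n]$), and applying \cref{campbells} to each. Your route is more self-contained and makes the appearance of $\Bin^{(n)}_{[n]}$ transparent; the paper's route is shorter because it reuses the formula just proven, at the cost of a small implicit step identifying the Poisson intensity from test functions of the form $(s,k)\mapsto k f(s)$. Both are sound.
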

\begin{proof}
The first equality follows from \cref{exchjointlaw}, taking $f_j=f$.
To establish the second equality, note that
\begin{multline}
\qquad \charbersym{f}^n(s,p) 
= (1-p+p\,e^{-f(s)})^n 
\\= \sum_{k=0}^n {n \choose k} p^k (1-p)^{n-k} e^{-k f(s)}
= \mathrm{Bin}(n,p) (k \mapsto e^{- k f(s)}). \qquad
\end{multline}
The second claim follows from \cref{thm:crmrep} after taking $g=0$, and noting that $\countproc {\ord S_n}$ is a Poisson process on $\bspace \times \Nats$.
\end{proof}

For a kernel $\lkernel$ from $S$ to $T$ let $\drag \lkernel$ be the kernel from $S$ to $S \times T$ given by $\drag \lkernel_s = \delta_s \otimes \lkernel_s$.
For a finite measure $\tau$ on a space $S$, let $\normalize {\tau}$ be the probability measure $\tau / (\tau S )$.

\newcommand{\postkappa}{\wp}
\newcommand{\df}{\drag f}
\newcommand{\dg}{\drag g}
\begin{thm}\label{exchpostlaw}
Let $g \ge 0$ be measurable,
let $\ord \rH$ and $\ord S_n$ be as above,
let $\lbm \otimes \lkernel$ be a disintegration of $\eta$,
assume that $\nrm \ll \lbm$, and let $\Delta$ satisfy $\nrm = \Delta \lbm$.
Then \as
\[
\log \EE^{\ord S_n} \oooF{ e^{ - \ord \rH g } } 
= %
      -\nrm g 
      - (\lbm \otimes \PF n 0 \lkernel) (1-e^{- g'}) 
      + \countproc{ \ord S_n } \log { \drag\postkappa^{(n)} e^{-\dg} } 
   ,
\]
where $\dg(s,k,p)\defas g'(s,p) \defas p\,g(s)$
and 
\[
\postkappa^{(n)}_{s,k} \defas
     \overline { 
       \Delta(s) \PF {n-1} {k-1} \delta_0 
       +
       \PF n k \lkernel_s  
       }
       .
\]
\end{thm}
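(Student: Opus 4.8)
The plan is to verify the asserted identity of conditional Laplace functionals by testing it against a determining family of functionals of the observation. Write $\Phi$ for the right-hand side of the claimed equality; since $\Phi$ is a deterministic constant plus an integral against $\countproc{\ord S_n}$, it is $\sigma(\countproc{\ord S_n})$-measurable, and the claim is equivalent to $\EE^{\ord S_n}\oof{e^{-\ord\rH g}} = e^{\Phi}$ a.s. Because the maps $e^{-\countproc{\ord S_n} h}$, for measurable $h\ge 0$, form a determining family for bounded $\sigma(\countproc{\ord S_n})$-measurable random variables, it suffices to prove
\[
\EE\oof{e^{-\ord\rH g}\,e^{-\countproc{\ord S_n} h}} = \EE\oof{e^{\Phi}\,e^{-\countproc{\ord S_n} h}}, \qquad h\ge 0 \text{ measurable.}
\]
Both sides will then be computed by Campbell's theorem (\cref{campbells}), and the whole argument reduces to matching two explicit exponents.

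For the left-hand side I would use the marked-Poisson representation underlying \cref{exchjointlawofS}. By \cref{thm:crmrep} the ordinary component is $\ord\rH = \nrm + \sum_{(s,\pp)\in\eta}\pp\,\delta_s$, and, conditionally on $\eta$, the atom at $(s,\pp)$ is retained in each of $X_1,\dotsc,X_n$ independently with probability $\pp$, so its total multiplicity across the $n$ processes is $\Bin(n,\pp)$-distributed; marking $\eta$ by these multiplicities yields a Poisson process on $\bspace\times(0,1]\times\Oof{0,1,\dotsc,n}$ with intensity $\lbm\otimes\lkernel\otimes\Bin^{(n)}$. Meanwhile, since $\nrm$ is nonatomic and $\nrm=\Delta\lbm$, the background contributes, after superposing $X_1,\dotsc,X_n$, an independent Poisson process of multiplicity-one atoms with intensity $n\nrm$. \cref{exchjointlawofS} records the law of $\countproc{\ord S_n}$ as Poisson with intensity $\lambda \defas n\nrm\otimes\delta_1 + \lbm\otimes\lkernel\,\Bin^{(n)}_{[n]}$. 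A single application of \cref{campbells} to this joint representation gives
\[
\EE\oof{e^{-\ord\rH g}\,e^{-\countproc{\ord S_n} h}} = e^{-\nrm g}\;e^{-(\lbm\otimes\PF n 0\lkernel)(1-e^{-g'})}\;e^{J},
\]
where $e^{-\nrm g}$ is the contribution of the background to $\ord\rH$, the factor involving $\PF n 0$ is the contribution of the unobserved ($k=0$) atoms of the marked process, and $J$ gathers the remaining $k\ge 1$ and background terms.

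For the right-hand side I would apply \cref{campbells} to the Poisson law of $\countproc{\ord S_n}$ just recalled, since $\Phi$ is a deterministic constant plus the linear functional $\countproc{\ord S_n}\oof{\log\drag\postkappa^{(n)}(e^{-\dg})}$. After cancelling the two matching deterministic factors and the common constant $-\lambda(\bspace\times\Nats)$ (the contribution of the ``$-1$'' terms on each side), the identity reduces to the single equality
\[
\int \drag\postkappa^{(n)}_{s,k}(e^{-\dg})\,e^{-h(s,k)}\,\lambda(\dee s,\dee k)
&= \sum_{k=1}^{n}\int e^{-\pp g(s)-h(s,k)}\,\Bin(n,\pp)(\Oof k)\,\lkernel_s(\dee\pp)\,\lbm(\dee s) \\
&\quad + \int e^{-h(s,1)}\,n\nrm(\dee s).
\]
The observation that makes this transparent is that the $k$-section of $\lambda$ is exactly
\[
\lambda(\dee s,\Oof k) = \binom n k\,Z_{s,k}\,\lbm(\dee s), \qquad k\in[n],
\]
where $Z_{s,k} = \Delta(s)\,\PF{n-1}{k-1}(0) + \int_{(0,1]}\PF n k(\pp)\,\lkernel_s(\dee\pp)$ is precisely the normalizing constant of $\postkappa^{(n)}_{s,k}$. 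Hence the normalization in $\drag\postkappa^{(n)}_{s,k}(e^{-\dg}) = Z_{s,k}\inv\oof{\Delta(s)\,\PF{n-1}{k-1}(0) + \int e^{-\pp g(s)}\,\PF n k(\pp)\,\lkernel_s(\dee\pp)}$ cancels against $\lambda$, and the elementary identity $\Bin(n,\pp)(\Oof k) = \binom n k\,\PF n k(\pp)$ reconciles the remaining factors termwise.

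The termwise check splits according to $k$. For $k\ge 2$ the point mass at $0$ disappears, since $\PF{n-1}{k-1}(0)=0$, so $\postkappa^{(n)}_{s,k}=\normalize{\PF n k\lkernel_s}$ and both sides reduce at once to $\binom n k\int e^{-\pp g(s)-h(s,k)}\,\PF n k(\pp)\,\lkernel_s(\dee\pp)\,\lbm(\dee s)$. The step I expect to be the main obstacle is $k=1$: an observed multiplicity-one atom may have arisen either from a genuine $\eta$-atom that appeared exactly once or from the nonatomic background, and the posterior must correctly mix a jump distributed as $\normalize{\PF n 1\lkernel_s}$ with an atom at $\pp=0$ carrying no persistent mass. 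The delicate point is the relative weighting: the background supplies weight $\Delta(s)$ (this is where the term $\Delta(s)\,\PF{n-1}{0}(0)\,\delta_0$ and the identity $\PF{n-1}{0}(0)=1$ enter), the $\eta$-source supplies weight $\int_{(0,1]}\PF n 1(\pp)\,\lkernel_s(\dee\pp)$, and the factor $\binom n 1 = n$ reconciles these against the background intensity $n\nrm = n\Delta\lbm$. Carrying the argument through Laplace functionals and the determining family, rather than through a bespoke disintegration of the posterior, is what keeps this bookkeeping elementary and sidesteps having to establish separately that the conditional law of a completely random measure is again completely random.
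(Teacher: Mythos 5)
Your proposal is correct and follows essentially the same route as the paper: both arguments identify the conditional law by matching the joint Laplace functional of $(\ord\rH,\countproc{\ord S_n})$ computed two ways via Campbell's theorem and the Poisson structure of $\countproc{\ord S_n}$ from \cref{exchjointlawofS}, and both hinge on the same disintegration identity for $\postkappa^{(n)}$ --- your termwise check of the $k$-sections of $\lambda$ (with the $k=1$ mixing of the $\Delta(s)\,\delta_0$ dust term against the $n\nrm$ background) is precisely the content of the paper's \cref{exchpostlawlemma}. The only cosmetic differences are that you test against the richer family $e^{-\countproc{\ord S_n}h}$ rather than $e^{-\ord S_n f}$ and make the marked-Poisson picture explicit rather than citing the binomial-thinning identity already recorded in \cref{exchjointlawofS}.
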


\newcommand{\cmeasure}{\mathfrak c}
For $n \ge k \ge 1$, note that $\PF {n-1} {k-1} \delta_0 $ is $\delta_0$ when $k=1$ and is otherwise the null measure.
The following result is the key identity.
\begin{lem}\label{exchpostlawlemma}
Define $\pi(s,p,k) = (s,k,p)$.
Then $\postkappa \defas \postkappa^{(n)}$ satisfies 
\[
(\Delta \lbm \otimes n\delta_1 + \lbm \otimes \lkernel \Bin^{(n)}_{[n]} ) \otimes \postkappa
=(\Delta\lbm \otimes \delta_0 \otimes n\delta_1 + \lbm \otimes \lkernel \otimes \Bin^{(n)}_{[n]} ) \circ \pi\inv.
\]      
\end{lem}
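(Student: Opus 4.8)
The plan is to verify the identity directly as an equality of $\sigma$-finite measures on $\bspace\times\Nats\times[0,1]$. Both sides are concentrated on the middle coordinate $k\in\{1,\dotsc,n\}$ (the factor $\Bin^{(n)}_{[n]}$ excludes $k=0$, while $n\delta_1$ forces $k=1$), so it suffices to test against rectangles $A\times\{k\}\times B$ with $A\in\bsa$, $k\in\Nats$, and $B\in\BorelSetsInt$. Equivalently, since every $\bspace$-marginal in sight is absolutely continuous with respect to $\lbm$ (because $\nrm=\Delta\lbm$), I would disintegrate both sides over $s$ against $\lbm$ and fix the fiber data $a\defas\Delta(s)\ge 0$ and $\lambda\defas\lkernel_s$, reducing to a one-dimensional identity of measures on $\Nats\times[0,1]$.

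First I would unpack the $k$-section of the first factor on the left. Using $\Bin(n,p)(\{k\})={n\choose k}p^k(1-p)^{n-k}$ together with $\nrm=\Delta\lbm$, its mass (as a density in $s$ relative to $\lbm$) is $n\,a\,\mathbf 1\{k=1\}+{n\choose k}(\PF n k\lambda)((0,1])\,\mathbf 1\{k\in[n]\}$. Next I would record the normalizer $Z_{s,k}$ of $\postkappa^{(n)}_{s,k}$: since $\PF{n-1}{k-1}\delta_0$ equals $\delta_0$ when $k=1$ and is null otherwise, one has $Z_{s,k}=a\,\mathbf 1\{k=1\}+(\PF n k\lambda)((0,1])$.

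The crux is the observation that for every $k\in\{1,\dotsc,n\}$ the ratio of this first-factor mass to $Z_{s,k}$ is exactly ${n\choose k}$. For $k\ge 2$ this is immediate, as the $\delta_0$-terms drop out and the common factor $(\PF n k\lambda)((0,1])$ cancels, leaving ${n\choose k}$. For $k=1$ it is the single delicate point: both the first-factor mass and $Z_{s,1}$ carry the atom of weight $a$ (the former through the $n\delta_1$ summand of $\nrm\otimes n\delta_1$, the latter through $\Delta(s)\PF{n-1}{0}\delta_0$), and the identity ${n\choose 1}=n$ is precisely what makes the ratio collapse to $n$ here too. Multiplying this ratio by the \emph{unnormalized} kernel $\Delta(s)\mathbf 1\{k=1\}\delta_0+\PF n k\lambda$ then reproduces, section by section in $k$, exactly the fiber of the right-hand side: at $k=1$ the $\delta_0$-atom of total weight $na$ matches the $\pi$-pushforward of $a\,\delta_0\otimes n\delta_1$, and for every $k$ the absolutely continuous part ${n\choose k}\PF n k\lambda$ coincides with $\lambda(\dee p)\,\Bin(n,p)(\{k\})$, i.e. the $\pi$-pushforward of $\lambda\otimes\Bin^{(n)}_{[n]}$. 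Reassembling over $k$ and integrating back against $\lbm$ yields the claim.

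The main obstacle is thus entirely the bookkeeping at $k=1$: one must carry the fixed atom at $p=0$ simultaneously in the total mass of the first factor and in the normalizer of $\postkappa^{(n)}$, and check that the binomial weight ${n\choose 1}=n$ aligns them so that the $\delta_0$-contribution on the left reproduces the $\Delta\lbm\otimes\delta_0\otimes n\delta_1$ term on the right. Away from $k=1$ the statement reduces to the elementary fact that normalizing $\PF n k\lambda$ and then rescaling by ${n\choose k}$ times its own total mass simply returns ${n\choose k}\PF n k\lambda$, which is exactly the $\Bin^{(n)}$-weighting appearing after the coordinate swap $\pi$.
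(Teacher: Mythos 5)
Your verification is correct and amounts to essentially the same direct computation as the paper's proof: both check the disintegration identity by matching, for each $k \in [n]$, the $k$-marginal mass of the first factor against the normalizer of $\postkappa^{(n)}_{s,k}$, the only delicate point being the $\delta_0$ atom at $k=1$, where ${n \choose 1} = n$ reconciles the dust contribution on the two sides. The paper organizes the bookkeeping slightly differently---it integrates a general test function $h$, divides by the normalizer $f'+g'$ to form $h'$, and splits the identity into two pieces corresponding to the two summands of the first factor---but the content is identical to your section-by-section ratio argument.
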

\begin{proof}
Let $h \ge 0$ be measurable, and define
\[
 h'(s,p,k) \defas \frac { h(s,k,p) } { g'(s,p,k') + f'(s,p,k)}
\]
where $g'(s,p,k) \defas \lkernel_s \PF n k$ 
and 
\[
 f'(s,p,k) \defas \Delta(s)\,\PF {n-1} {k-1} (0) = \Delta(s)\,\delta_1 \{k\} .
\]
Let $\cmeasure_n$ denote counting measure on $[n]$, and let $\cmeasure^b_n \defas {n \choose \argdot} \cmeasure_n$.  Noting that  
$\Bin^{(n)}_{[n]}(p,\argdot) = \PF n \argdot (p) \cmeasure^b_n$ 
and $\Bin^{(n)}_{[1]}(p,\argdot) =  \PF n 1 (p) \, n \delta_1$, 
it is straightforward to verify that
\[
(\Delta \lbm \otimes n\delta_1 \otimes \postkappa ) h
=
 (\Delta\lbm \otimes \delta_0 \otimes n \delta_1) f' h'
+  (\lbm \otimes \lkernel \otimes \Bin^{(n)}_{[n]}) f'h'
\]
and
\[
(\lbm \otimes \lkernel \Bin^{(n)}_{[n]} \otimes \postkappa ) h
= (\Delta\lbm \otimes \delta_0 \otimes n \delta_1) g'h'
          + (\lbm \otimes \lkernel \otimes \Bin^{(n)}_{[n]}) g'h'
 .
\]
Summing and using the identity $(f' + g')h' = h \circ \pi$ completes the proof.
\end{proof}

\begin{proof}[Proof of \cref{exchpostlaw}]
Let $f\ge 0$ be measurable.
By the chain rule of conditional expectation,
\[
\EE e^{- \ord\rH g - \ord S_n f }
= \EE \ooF{ e^{-\ord S_n f } \EE^{\ord S_n} \ooF{ e^{-\ord \rH g} } }
\]
and so it suffices to show that \cref{jointlaweqnofS} in \cref{exchjointlawofS} is equal to  
\[
\exp \Oof{ 
      -\nrm g 
      - (\lbm \otimes \PF n 0 \lkernel) (1-e^{- g'}) }
 \, \EE \exp \Oof{ \countproc{ \ord S_n } \log {\drag\postkappa^{(n)} e^{-\df-\dg} } },
\label{kthetnhesuhe}
\]
where $\df(s,k,p) = k\, f(s)$.
By \cref{exchjointlawofS}, the identity $\nrm = \Delta \lbm$, and the fact that $\postkappa^{(n)}$ is a probability kernel,
\[
\begin{split}\label{asoehusaeo}
&\log \EE \exp \Oof{ \countproc{ \ord S_n } \log {\drag\postkappa^{(n)} e^{-\df-\dg} } }
\\&\qquad=
 - (n \Delta\lbm \otimes \delta_1 + \lbm \otimes \lkernel \Bin^{(n)}_{[n]} )
      \drag\postkappa^{(n)} (1-e^{-\df-\dg} ) 
      .
\end{split}
\]
By \cref{exchpostlawlemma} and the identity
\[
\lkernel \otimes \Bin^{(n)}
 = 
      \lkernel \otimes \Bin^{(n)}_{[n]}
         +
       \PF n 0 \lkernel \otimes \delta_0 ,
\]                    
we can rewrite the right hand side of \cref{asoehusaeo} as
\[
\begin{split}
          -n\nrm(1-e^{-f}) 
                    &- (\lbm \otimes (\lkernel \otimes \Bin^{(n)})) (1-e^{-f''-g''}) 
                    \\& + (\lbm \otimes \PF n 0 \lkernel) (1-e^{- g'})
       ,
\end{split}
\]
where $f'' = \df \circ \pi$ and $g'' = \dg \circ \pi$.
Substituting back into \cref{kthetnhesuhe}, we find agreement with \cref{jointlaweqnofS}, completing the proof.
\end{proof}

In the next section, we introduce a generalization of the one-parameter scheme.
\section{The continuum-of-urns scheme}
\label{sec:cup}

In this section, we define a generalization of the one-parameter scheme;
show that it is an exchangeable sequence of Bernoulli processes;
and then characterize the directing random \hazard\ measure.  
As a result we define a family of completely random \hazard\ measures generalizing beta processes.  
\NA{These random measures can be similarly organized into hierarchies and 
admit a straightforward urn scheme characterizing directed \iid\ sequences of Bernoulli processes.}

\subsection{Construction}

\newcommand{\EQ}{\EE_{\nY}}
Let $\nY \defas \process \nY \Nats$ be a nonrandom sequence of simple measures on $\borelspace$ 
concentrated on a nonrandom locally-finite countable set $\Atoms$,
and let $\EPPFfam$ be a \defn{(measurable) family of EPPFs}, i.e., one such that the map $s \mapsto \pi_s(n)$ is measurable for every $n \in \Nats^*$.
For every $s \in \Atoms$, let $Z^s \defas (Z^s_{1},Z^s_{2},\dotsc)$ be an independent $\eppf_s$-scheme, 
and let $\ginitial {} s$ be the arrival times of the tokens in $Z^s$.
Consider the sequence $\nX \defas \process \nX \Nats$ of simple point process, concentrated on $\Atoms$,
where, for every $n \in \Nats$ and $s \in \Atoms$,
\[\label{simpledef}
 \nX_n \theset s = \nY_{\ginitial n s} \theset s. 
\]
Equivalently,
$\nX_1 \defas \nY_1$
and for every $n \in \Nats$ and $s \in \Atoms$, 
\[\label{maindef}
\nX_{n+1} \theset s \defas 
\begin{cases}
\nX_{j} \theset s, & \text{ if $Z^s_{n+1} = Z^s_j$, where $j \le n$,} \\
\nY_{n+1} \theset s, & \text{ otherwise.} 
\end{cases}
\]
Let $\cQ{\nY}$ be the law of $\nX$, which is clearly a measurable function of $\nY$.

\begin{definition}[\COUS]
Let $\BM$ be a \hazard\ measure on $\borelspace$ and let $\process X \Nats$ be a sequence of random measures on $\borelspace$.
We call $\process X \Nats$ a \defn{\COUS\ induced by the (measurable) EPPF family $\EPPFfam$ and \hazard\ measure $\BM$} when there exists an \iid\ sequence $\process Y \Nats$ of Bernoulli processes with \hazard\ measure $\BM$ such that 
\[\label{mainconddef}
\Pr[\process X \Nats \given \process Y \Nats] = \cQ{\process Y \Nats} \ \as
\]  
We may also say that $\process X \Nats$ is a \COUS\ \defn{induced by $\EPPFfam$ and $\process Y \Nats$}.
A \COUS\ induced by a EPPF family $\EPPFfam$ is said to be \defn{homogeneous} if, for some EPPF $\eppf$ and every $s \in \bspace$, we have $\eppf_s = \eppf$, and \defn{nonhomogeneous} otherwise.
\end{definition}

\begin{remark}[relationship with one-parameter scheme] \label{rem:CFopus}
Let $\conc > 0$ and consider the EPPF $\eppf$ given by  
\[\label{opeppf}
\eppf (n_1,\dotsc,n_k) \defas \frac { \conc^{k-1} \prod_{i=1}^k (n_i - 1)! } { [1+ \conc]_{n - 1} },
\]
where $n = \sum_{i=1}^k n_i$ and $[x]_m \defas \prod_{j=1}^m (x+j-1)$.  
This EPPF is that associated with the exchangeable sequence characterized by \cref{bmus}, i.e., a Blackwell-MacQueen urn scheme.
It is therefore immediate that the above definition of a \COUS\ specializes to that of the one-parameter scheme with concentration parameter $\conc$.
Thus, in this special case, $\process X \Nats$ is an exchangeable sequence of Bernoulli processes directed by a beta process.
We can generalize the one-parameter scheme by fixing a (measurable) concentration \emph{function} $\conc : \bspace \to \NNReals$ and constructing a \COUS\ induced by the measurable family $\EPPFfam$ where $\eppf_s$ is given by \cref{opeppf} for $\conc=\conc(s)$.
In this case, the sequence remains exchangeable and directed by a nonhomogeneous beta process.  (\citep{Thibaux2007} discuss this particular nonhomogeneous case.)  The next few results show that a general \COUS\ is also exchangeable and directed by a completely random \hazard\ measure.
\end{remark}

\begin{remark}[simulability]
Note that in a computer simulation of the processes $\nX_1,\dotsc, \nX_n$, one need only generate $Z^s_1,\dotsc,Z^s_n$ for each $s \in \nY_j$ and $j \le n$.
\end{remark}

For the remainder of the section, let $\process X \Nats$ be a \COUS\  induced by the measurable family $\EPPFfam$ 
and an \iid\ sequence $\process Y \Nats$ of Bernoulli processes with \hazard\ measure $\BM$.  

\subsection{\Triangular\ construction}

In order to characterize the law of $\process X \Nats$, it will be useful to return to its construction and produce a richer process from which we can derive $\process X \Nats$.
For every $s \in \Atoms$, let $\ftt s {} \defas (\ftt s 1, \ftt s 2, \dotsc)$ be the first arrival times of the unique tokens in $Z^s$, and recall that, on the event that there are only $j$ unique tokens, $\ftt s k = \infty$ \as\ for every $k > j$.
Let $\indexmk {\nX} {n} {m} {k} $, for ${k} \le {m} \le {n} \in \Nats$, be the \triangular\ array of simple point processes, concentrated on $\Atoms$, 
such that, for every $s \in \Atoms$, 
\[
\indexmk {\nX} {n} {m} {k} \{s\}
 \defas 
 1(\ftt s {k} = {m})\, 1(\tau^s_{n} = m) \, \nY_{m}\{s\} .
\]
It is easy to verify that,
for every ${n} \in \Nats$, 
\[
\nX_{n} = \sum_{{k} \le {m} \le {n}} \indexmk {\nX} {n} {m} {k} \ \as
\]
Writing $\cQ{\nY}^*$ for the law of the \triangular\ array of simple point processes,
a transfer argument 
implies that there exists a \triangular\ array of simple point processes $\indexmk {X} {n} {m} {k}$
such that
\[\label{thithutuihui}
\Pr [ \indexmk {X} {n} {m} {k} : {k} \le {m} \le {n} \in \Nats \given Y ] = \cQ{Y}^* \ \as
\]
and therefore, for every $n \in \Nats$, 
\[
X_{n} = \sum_{{k} \le {m} \le {n}} \indexmk {X} {n} {m} {k} \ \as
\]

\subsection{The law via characteristic functionals}

We begin by characterizing the probability kernel $\nY \mapsto \cQ {\nY}^*$.
For every 
$n \in \Nats$
and family $f=\gprocess {\indexmk f {j} {m} {k} } {{k} \le {m} \le {j} \le {n}}$ of nonnegative measurable functions on $\bspace$,
it follows from the independence of the sequences $Z^s$ that
\[\label{theutheuthiu}
\log \EE \ooof { \exp \Ooof { - \textstyle \sum_{{k} \le {m} \le {j} \le {n}}
            \indexmk {\nX} {j} {m} {k} \indexmk f {j} {m} {k} }}
 = 
\sum_{s \in \Atoms}
\log \lambda_{f} (s, \nY_1\{s\},\dotsc, \nY_n\{s\} )
\]            
where
\[
\lambda_{f} (s,y_1,\dotsc,y_n)
=
  \EE_{\eppf_s} \ooof { \exp \Ooof { -
     \textstyle \sum_{{k} \le {m} \le {j }\le {n}}
               1(M_{k} = {m})\,
               1(\tau_{j} = m)\,
               y_m \,
                \indexmk f {j} {m} {k} (s) }}.
\]
Introduce on $\bspace \times ( \theset {0,1}^n \setminus \theset 0^n) $
the measure
\[
U_n \defas \sum_s \delta_{(s,(Y_1 \theset s,\dotsc,Y_n \theset s))}.
\]
It follows from \cref{theutheuthiu,thithutuihui} that \as\
\[\label{funcY2}
\log \EE^{Y} \oooF { \exp \Ooof { - \textstyle \sum_{{k} \le {m} \le {j} \le {n}}
            \indexmk {\nX} {j} {m} {k} \indexmk f {j} {m} {k} } 
            }
 = U_{n} \log \lambda_{f} .
\]            
\begin{prop}\label{altlaplacefuncs}
Let $R_n \defas Y_1 + \dotsm + Y_n$
and define 
\[\label{blockver} 
\Lambda_{f}(s,r) \defas 
  \sum_{y : \sum_j y_j = r} {n \choose r}\inv 
        \lambda_{f}(s,y_1,\dotsc,y_n)
   . 
\]
Then \as
\[
\EE^{R_n} \ooF { \exp \Oof{-\textstyle \sum_{{k} \le {m} \le {j} \le {n}}  \indexmk {X} {j} {m} {k} \indexmk f {j} {m} {k}  }} = \exp \Oof{ \countproc {R_n} \log \Lambda_{f} }. 
\]
\end{prop}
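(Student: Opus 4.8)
The plan is to descend from the conditioning $\sigma$-algebra $\sigma(Y)=\sigma(Y_1,\dotsc,Y_n)$, on which \cref{funcY2} already evaluates the conditional Laplace functional, to the coarser $\sigma(R_n) \subseteq \sigma(Y)$. Since $R_n = Y_1 + \dotsm + Y_n$ is $\sigma(Y)$-measurable, the tower property and \cref{funcY2} give, a.s.,
\begin{align*}
\EE^{R_n} \ooF{ \exp \Oof{ -\textstyle\sum_{k \le m \le j \le n} \indexmk X j m k \indexmk f j m k } }
&= \EE^{R_n} \ooF{ \EE^{Y} \ooF{ \exp \Oof{ -\textstyle\sum_{k \le m \le j \le n} \indexmk X j m k \indexmk f j m k } } } \\
&= \EE^{R_n} \ooF{ \exp \oof{ U_n \log \lambda_{f} } }
= \EE^{R_n} \ooF{ \textstyle\prod_{s} \lambda_{f}(s, Y_1\theset s, \dotsc, Y_n\theset s) },
\end{align*}
the last step merely unfolding $U_n \log \lambda_{f} = \sum_s \log \lambda_{f}(s, Y_1\theset s, \dotsc, Y_n\theset s)$ and using $\lambda_{f}(s,0,\dotsc,0)=1$.

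The crux is to disintegrate the completely random marked point process $U_n = \sum_s \delta_{(s,(Y_1\theset s,\dotsc,Y_n\theset s))}$ over $\sigma(R_n)$, which is generated by the image of $U_n$ under the count map $(s,v) \mapsto (s,\sum_i v_i)$. I claim that, conditionally on $R_n$, the incidence vectors $(Y_1\theset s,\dotsc,Y_n\theset s)$ are independent across atoms $s$ and each is uniform over the $\binom{n}{r}$ binary vectors of coordinate sum $r \defas R_n\theset s$. I would prove this by splitting $U_n$ into fixed-atomic and ordinary parts: at a fixed atom $s \in \FixedAtoms$ the vector has \iid\ $\Bernoulli(\BM\theset s)$ coordinates, so conditioning on its sum yields the uniform level-set law; the ordinary part is a Poisson process on $\bspace$ marked by an index in $[n]$ whose intensity is symmetric in the index (the ordinary components of $Y_1,\dotsc,Y_n$ are \iid\ with common nonatomic intensity $\NABM$, so their atoms are a.s.\ distinct and the index is equidistributed), whence the independent-marking theorem gives, conditionally on the atom locations, \iid\ uniform indices — exactly the uniform level-set law at $r=1$. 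Independence across atoms is complete randomness in both parts. This disintegration is the one substantive step; everything else is bookkeeping.

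Granting it, the conditional expectation in the first display factorizes over atoms, and each factor is the uniform average of $\lambda_{f}(s,\argdot)$ over $\theset{v : \sum_i v_i = R_n\theset s}$, which is precisely $\Lambda_{f}(s,R_n\theset s)$ by \cref{blockver}. Hence
\[
\EE^{R_n} \ooF{ \textstyle\prod_{s} \lambda_{f}(s, Y_1\theset s, \dotsc, Y_n\theset s) } = \prod_{s} \Lambda_{f}(s, R_n\theset s) = \exp \oof{ \countproc{R_n} \log \Lambda_{f} },
\]
where the final equality uses $\countproc{R_n} = \sum_s \delta_{(s,R_n\theset s)}$ together with $\Lambda_{f}(s,0)=1$ to drop the atoms with $R_n\theset s = 0$. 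Chaining the two displays proves the proposition.
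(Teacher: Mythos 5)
Your proof is correct and follows essentially the same route as the paper's: both reduce the claim to the fact that, conditionally on $R_n$, the incidence vectors $(Y_1\theset s,\dotsc,Y_n\theset s)$ are independent across atoms and uniform on the level set $\theset{y \st \sum_j y_j = R_n\theset s}$, and then average $\lambda_f$ over that level set to get $\Lambda_f$. The only difference is that the paper labels this disintegration ``straightforward to verify'' while you actually supply the verification (fixed-atomic part via exchangeability of \iid\ Bernoullis given their sum, ordinary part via the marking theorem), which is a welcome addition rather than a deviation.
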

\begin{proof}
Let $g,g_1,\dotsc,g_n \ge 0$ be measurable functions
and define $g'(s,k) \defas k\,g(s)$.
Then 
\[\label{Rlaplacefunc}
\log \EE \exp \of { - R_{n} g }
= - {n} \NABM (1 - e^{-g}) 
  + \countproc {\BM} \log \drag \Bin^{({n})} e^{-g'}.
\]
It is straightforward to verify that \as
\[\begin{split}
&\log \EE^{R_{n}} \oooF {
    \exp \Ooof { - \textstyle \sum_{{j} \le {n}} Y_j g_j } 
    }
 \\&\qquad= 
       \countproc {R_n} \log
       \ooof{ 
          s,r \mapsto \textstyle \sum_{y : \sum_j y_j = r} {n \choose r}\inv 
                                \exp \Ooof { - \textstyle \sum_{{j} \le {n}} y_j g_j(s)} 
         }
         ,
\end{split}         
\]
from which we can infer that \as 
\[\label{estbhithiusu}
&\log \EE^{R_{n}} \oooF {
    \exp \Ooof { - U_n h } 
    }
 \\&\qquad= 
       \countproc {R_n} \log
       \ooof{ 
          s,r \mapsto \textstyle \sum_{y : \sum_j y_j = r} {n \choose r}\inv 
                                \exp \Ooof { - h(s,y_1,\dotsc,y_n) }
         }
         .
\]
The proof then follows from \cref{funcY2,estbhithiusu}.
\end{proof}

The following result characterizes the law of the \triangular\ array, and highlights the distinct roles played by the atoms $\FixedAtoms$
and the nonatomic part $\NABM$ of the \hazard\ measure $\BM$.

\begin{prop}\label{altlaplacefuncofjoint}
Let $n \in \Nats$, 
let $\mathrm{Bin}_{(n,p)}$ denote the distribution of the sum of $n$ independent Bernoulli random variables, each with mean $p \in (0,1]$,
and let $f=\gprocess {\indexmk f {n} {m} {k} } {{k} \le {m} \le {n} \in \Nats}$ be a family of nonnegative measurable functions on $\bspace$. 
Then
\[
\log \EE \exp \Oof {-\textstyle \sum_{{k} \le {m} \le {j} \le {n}}
            \indexmk {X} {j} {m} {k} f_{j,{m},{k}} }
= - n \NABM (1 - \Lambda_f (\argdot, 1))
     + \countproc {\BM} \log \drag \Bin^{(n)} \Lambda_f
\]
\end{prop}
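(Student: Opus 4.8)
The plan is to peel off the randomness of the underlying urns first and only afterwards integrate out the directing Bernoulli processes. By the chain rule of conditional expectation, the quantity in question equals $\EE\bigl[\,\EE^{R_n}\exp(-\sum_{k\le m\le j\le n}\indexmk{X}{j}{m}{k}\indexmk{f}{j}{m}{k})\,\bigr]$, where $R_n\defas Y_1+\dotsm+Y_n$. \cref{altlaplacefuncs} evaluates the inner conditional expectation as $\exp(\countproc{R_n}\log\Lambda_{f})$, with $\Lambda_f$ as in \cref{blockver}. Hence it suffices to compute the ordinary (unconditional) Laplace functional $\EE\exp(\countproc{R_n}\log\Lambda_{f})$ of the point process $\countproc{R_n}=\sum_s\delta_{(s,R_n\theset s)}$ on $\bspace\times\Nats$ against the integrand $G\defas-\log\Lambda_{f}$. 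First I would note that $\Lambda_f(s,0)=\lambda_f(s,0,\dotsc,0)=1$, since the empty-history urn contributes nothing; thus $G(s,0)=0$, which is exactly what is needed so that ``sites with no atom'' do not contribute. Moreover, since each $\indexmk{f}{j}{m}{k}\ge 0$ we have $\Lambda_f\in(0,1]$, so $G\ge 0$ and the whole functional is bounded by $1$, sidestepping any integrability concern.

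The remaining step is precisely the computation behind \cref{Rlaplacefunc}, but carried out for the nonlinear integrand $G(s,k)$ rather than the linear $g'(s,k)=k\,g(s)$. Because $R_n$ is a superposition of $n$ i.i.d.\ Bernoulli processes with mean $\BM$, it is completely random, so $\countproc{R_n}$ splits into independent ordinary and \fixedcomp\ parts. On $\bspace\setminus\FixedAtoms$ the ordinary components of the $Y_j$ are independent Poisson processes with intensity $\NABM$, whose superposition is Poisson with intensity $n\NABM$; since $\NABM$ is nonatomic, with probability one no two of these atoms coincide, so every atom of $R_n$ in this region has multiplicity exactly $1$. Hence $\countproc{R_n}$ restricted to $(\bspace\setminus\FixedAtoms)\times\Nats$ is a Poisson process supported on the level $k=1$ with intensity $n\NABM$, and \cref{campbells} yields the factor $\exp(-n\NABM(1-e^{-G(\argdot,1)}))=\exp(-n\NABM(1-\Lambda_f(\argdot,1)))$. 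At each fixed atom $s\in\FixedAtoms$, on the other hand, $R_n\theset s=\sum_{j=1}^n Y_j\theset s$ is a sum of $n$ independent $\Bernoulli(\BM\theset s)$ variables, hence $\Bin(n,\BM\theset s)$-distributed, and these are independent across $s$; this produces the factor $\prod_{s\in\FixedAtoms}\sum_k\binom{n}{k}\BM\theset s^{k}(1-\BM\theset s)^{n-k}e^{-G(s,k)}=\exp(\countproc{\BM}\log\drag\Bin^{(n)}\Lambda_f)$. Multiplying the two independent contributions and recalling $e^{-G}=\Lambda_f$ gives exactly the right-hand side of \cref{altlaplacefuncofjoint}.

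The hard part will be the justification that replaces the full Laplace functional of $\countproc{R_n}$ on $\bspace\times\Nats$ by its value at level $k=1$ on the nonatomic part: one must argue, using the nonatomicity of $\NABM$ together with the \as\ simplicity of each $Y_j$, that no multiplicities exceeding $1$ arise away from $\FixedAtoms$, so that the Poisson superposition of the ordinary components may be treated as a Poisson process on $\bspace$ (marked trivially at level $1$) rather than on $\bspace\times\Nats$. This is the same point that underlies \cref{Rlaplacefunc}, so I would either invoke that derivation verbatim with $G$ in place of $g'$, or, for self-containedness, redo it through \cref{thm:crmrep} and \cref{campbells} as sketched above. The only measurability obligation is that $s\mapsto\Lambda_f(s,\argdot)$ be measurable, which is inherited from the measurability of the family $\EPPFfam$ through $\lambda_f$ and the finite averaging in \cref{blockver}; everything else is the routine bookkeeping of independent increments.
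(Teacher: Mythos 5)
Your proposal is correct and follows essentially the same route as the paper, whose proof is precisely ``condition on $R_n$, apply \cref{altlaplacefuncs}, then integrate out $R_n$ via the Laplace-functional computation behind \cref{Rlaplacefunc}.'' Your expansion of that last step --- splitting $\countproc{R_n}$ into the Poisson superposition on the nonatomic part (all multiplicities equal to $1$ a.s., handled by \cref{campbells}) and the independent $\Bin(n,\BM\theset s)$ masses at the fixed atoms, with the observation that $\Lambda_f(\argdot,0)=1$ --- is exactly the intended justification.
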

\begin{proof}
Follows from \cref{altlaplacefuncs,Rlaplacefunc}.
\end{proof}

We now characterize the law of $\process X \Nats$.

\begin{prop}\label{laplacefuncofjoint}
Let $f =\nprocess f {[n]} j$ be a family of nonnegative measurable functions on $\bspace$
and let
\[
\Lambda_{f}^* (s,r)
= \sum_{y : \sum_j y_j = r} {n \choose r}\inv 
  \EE_{\eppf_s} \ooof { \exp \Ooof { -
     \textstyle 
            \sum_{{j} \le {n}}
                y_{\tau_{j}}
                f_j (s)
               }}
        .
\]
Then
\[
\log \EE \exp \Oof {-\textstyle\sum_{j \le n} X_j f_j }
= - n \NABM (1 - \Lambda_f^* (\argdot, 1))
     + \countproc {\BM} \log \drag \Bin^{(n)} \Lambda_f^*.
\]
\end{prop}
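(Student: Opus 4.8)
The plan is to derive this as an immediate specialization of \cref{altlaplacefuncofjoint}, which already characterizes the joint law of the full \triangular\ array $\indexmk{X}{j}{m}{k}$. The only real work is to collapse that array back down onto the sequence $\process X \Nats$ and to verify that the two integrands $\Lambda_f$ and $\Lambda_f^*$ coincide under the substitution this collapse induces. First I would invoke the \triangular\ decomposition established above, namely $X_j = \sum_{k \le m \le j} \indexmk{X}{j}{m}{k}$ a.s.\ for every $j \le n$. Pairing with the test functions gives
\[
\sum_{j \le n} X_j f_j = \sum_{k \le m \le j \le n} \indexmk{X}{j}{m}{k} f_j \quad \as
\]
which is exactly the exponent in \cref{altlaplacefuncofjoint} under the degenerate choice of array $\indexmk{f}{j}{m}{k} \defas f_j$, i.e.\ a family depending only on the outer index $j$. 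Applying \cref{altlaplacefuncofjoint} with this choice thus reproduces the left-hand side of the claim and yields the right-hand side with $\Lambda_f$ in place of $\Lambda_f^*$.

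It then remains to identify $\Lambda_f$ with $\Lambda_f^*$ under this substitution. Since $\Lambda_f(s,r) = \sum_{y:\sum_j y_j = r} \binom{n}{r}\inv \lambda_f(s,y_1,\dotsc,y_n)$, it suffices to show $\lambda_f(s,y) = \EE_{\eppf_s}[\exp(-\sum_{j \le n} y_{\tau_j} f_j(s))]$. The key observation is that, for each realization of the $\eppf_s$-scheme and each fixed position $j$, the arrival time $\tau_j$ is a single well-defined value, and there is exactly one token index $k$ with $M_k = \tau_j$ (the block containing $j$). Hence for fixed $j$ the double sum over $(k,m)$ of $1(M_k = m)\,1(\tau_j = m)\,y_m$ collapses to the single term $y_{\tau_j}$, giving
\[
\sum_{k \le m \le j \le n} 1(M_k = m)\,1(\tau_j = m)\,y_m\,f_j(s) = \sum_{j \le n} y_{\tau_j}\, f_j(s).
\]
Substituting this into the definition of $\lambda_f$ and comparing with the integrand defining $\Lambda_f^*$ shows $\Lambda_f = \Lambda_f^*$, which completes the argument.

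The calculation is otherwise entirely mechanical, so I expect the one delicate point to be precisely this collapse of the indicator sum—not any analytic estimate. It relies on the fact that $\{M_k\}_k$ enumerates the distinct token-arrival times without repetition, so that the product $1(M_k=m)\,1(\tau_j=m)$ selects a \emph{unique} pair $(k,m)$ for each $j$. I would state that bookkeeping carefully, since it is the sole place where a careless re-indexing could introduce a spurious factor.
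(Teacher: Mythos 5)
Your proposal is correct and is exactly the paper's argument: the paper's proof is the one-line statement that the result follows from \cref{altlaplacefuncofjoint} upon taking $\indexmk f {j} {m} {k} = f_j$, and your verification that the indicator sum collapses so that $\Lambda_f = \Lambda_f^*$ is just the bookkeeping the paper leaves implicit.
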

\begin{proof}
Follows from \cref{altlaplacefuncofjoint} for $\indexmk f {n} {m} {k} = \index f_n$.
\end{proof}

While \cref{laplacefuncofjoint} characterizes the law of $X$, it is relatively opaque.  Consider the fixed component:  We have
\[
\Bin^{(n)} \Lambda_f^*(s,\argdot)
&= (q \mapsto \EE_{\eppf_s}
      \ooof { \exp \Ooof { -
     \textstyle 
            \sum_{{j} \le {n}}
               1(U_{\tau_j} \le q)
                f_j (s)
               }}
\\&= (q \mapsto \EE_{\eppf_s}
      \ooof { \exp \Ooof { -
     \textstyle 
            \sum_{{j} \le {n}}
                Z^q_j
                f_j (s)
               }}
\\&= \thekernel_s \prod_{{j} \le {n}} \charbersym {f_j} (s, \argdot),
     \label{thiuthiutihui}
\]
where $U_1,U_2,\dotsc$ are independent uniformly distributed random variables independent of $\tau_1,\tau_2,\dotsc$ satisfying $Z_n = U_{\tau_n}$ a.s.

\newcommand{\unin}{\ceiling{nU}}
Now consider the ordinary component: Let $U$ be a uniformly distributed random variable, independent from $\process \tau \Nats$. We have
\[
n( 1 - \Lambda_f^* (s, 1))
&=  
   \sum_{y : \sum_j y_j = 1} 
  \EE_{\eppf_s} \ooof { 1- 
   \exp \Ooof { -
     \textstyle 
            \sum_{{j} \le {n}}
                y_{\tau_{j}}
                f_j (s) 
               } }
\\
&= 
  \EE_{\eppf_s} \of {
   1- \exp \Ooof { -
     \textstyle 
            \sum_{{j} \le {n}}
                1(\tau_{j} = {\unin})\,
                f_j (s)
               }}
\\
&= 
  \EE_{\eppf_s} \of {
  \frac 1 {N_{K_{\tau_{\unin}},n}}
  (1 - 
   \exp \Ooof { -
     \textstyle 
            \sum_{{j} \le {n}}
                1(\tau_{j} = \tau_{\unin})\,
                f_j (s)
               }
          )
      }
      .
\]
By exchangeability, for all ${m} \le {n}$, 
\[
N_{K_{\tau_{\unin}},n} \equaldist N_{1,n}
\]
and, conditioned on $N_{K_{\tau_{\unin}},n} = k$,
the vector $(1(\tau_1 = \tau_{\unin}),\dotsc,1(\tau_n = \tau_{\unin}))$ is uniformly distributed among those vectors with $k$ ones and $n-k$ zeros.
Finally, note that, conditioned on $P_1$, it is the case that $N_{1,n}-1$ is binomially distributed with mean $(n-1)P_1$ and variance $(n-1)P_1(1-P_1)$.
It follows that
\[
n( 1 - \Lambda_f^* (s, 1))
&= 
  \Pr_{\eppf_s} \oF { N_{1,n} }
  (k \mapsto
      \frac 1 k
      \sum_{y : \sum_j y_j = k}
        (1 - \exp \Ooof { -
	        \textstyle 
        		    \sum_{{j} \le {n}}
        		        	y_j \, f_j (s)
               }
          )
      )
\\&= 
  \Pr_{\eppf_s} \oF { P_{1} }
  \Bin^{(n-1)}
  (k \mapsto
      \frac 1 {k+1}
      \sum_{y : \sum_j y_j = k+1}
        (1 - \exp \Ooof { -
	        \textstyle 
        		    \sum_{{j} \le {n}}
        		        	y_j \, f_j (s)
               }
          )
      )  \label{aosnhtintihusu}
      .
\]

Let $\Delta(s) = \Pr_{\eppf_s} \{ P_1 = 0\}$ for $s \in \bspace$.

\begin{thm}\label{maindefinetti}
Let $\cF_n = \sigma(X_1,\dotsc,X_n)$ and 
define the partial sums $S_n \defas X_1 + \dotsm + X_n$, for $n \in \Nats$.
There is an \as\ unique, completely random \hazard\ measure $\rH$ such that
\[\label{definettiLLN}
\rH(A) = \lim_{n\to\infty} {\textstyle \frac 1 n} S_n (A) \ \as, \text{ for every $A \in \bsa$,}
\]
and
\[\label{bepdisint}
\process X \Nats \given \rH \dist \BePLAW(\rH).
\]
In particular, $\process X \Nats$ is an exchangeable sequence of Bernoulli processes.
For every $s \in \bspace$ and $A \in \bsa$, we have $\Pr \{\rH \{s\} \in A \} = \thekernel(\NABM\{s\}, A)$.
Moreover, conditional on $\cF_n$, the law of $\ord \rH = \rH ( \argdot \setminus \FixedAtoms)$ 
is given by \as\
\[
\log \EE^{\cF_n} \oooF{ e^{ - \ord \rH g } } 
= %
      - \Delta \NABM g 
      - (\NABM \otimes \PF {n-1} {-1} \sdist) (1-e^{- g'}) 
      + \countproc{ \ord S_n } \log { \drag\postkappa^{(n)} e^{-\dg} } 
   ,
\]
where $\dg(s,k,p)\defas g'(s,p) \defas p\,g(s)$
and 
\[
\postkappa^{(n)}_{s,k} \defas
     \overline {
       \PF {n-1} {k-1} \sdist
       }
       .
\]
\end{thm}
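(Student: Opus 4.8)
The plan is to identify the directing measure by matching Laplace functionals: \cref{laplacefuncofjoint} gives the joint functional of $\process X \Nats$, while \cref{exchjointlaw} (with $g=0$) gives that of a conditionally i.i.d.\ sequence of Bernoulli processes directed by a completely random hazard measure. I would read the three components of the candidate $\rH$ off the two pieces of \cref{laplacefuncofjoint}. The fixed-atom piece $\countproc{\BM}\log\drag\Bin^{(n)}\Lambda_f^*$ equals, by \cref{thiuthiutihui} and since $\countproc{\BM}$ supplies the Bernoulli parameter $q=\BM\{s\}$ at each atom $s\in\FixedAtoms$, the sum $\sum_{s\in\FixedAtoms}\log\int\thekernel_s(\BM\{s\},\dee p)\prod_{j\le n}\charbersym{f_j}(s,p)$. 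Comparing with the sum over fixed atoms in \cref{exchjointlaw} forces $\Pr\{\rH\{s\}\in\argdot\}=\thekernel_s(\BM\{s\},\argdot)$, which is the marginal claim; for $s\notin\FixedAtoms$ one has $\BM\{s\}=0$ and $\thekernel_s(0,\argdot)=\delta_0$, so $\rH\{s\}=0$ and the identity persists at every $s$.

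The crux is the nonatomic piece $-n\NABM(1-\Lambda_f^*(\argdot,1))$. Using that $f(n,k)=\int p^{k-1}(1-p)^{n-k}\sdist(\dee p)$ is the probability that a fixed $k$-subset of $[n]$ forms a block of $\Pi_n$ (the remark following \cref{introcombstruct}), I would rewrite $n(1-\Lambda_f^*(s,1))=\sum_{k\ge1}f(n,k)\sum_{y:\sum_j y_j=k}(1-e^{-\sum_j y_j f_j(s)})$, and then recognize the binomial expansion of $1-\prod_{j\le n}\charbersym{f_j}(s,p)$ to collapse this to $\int_{[0,1]}p^{-1}\bigl(1-\prod_{j\le n}\charbersym{f_j}(s,p)\bigr)\sdist(\dee p)$. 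Splitting off the atom of $\sdist$ at $0$ --- whose contribution is $\Delta(s)\sum_{j\le n}(1-e^{-f_j(s)})$, since $p^{-1}(1-\prod_{j}\charbersym{f_j}(s,p))\to\sum_j(1-e^{-f_j(s)})$ as $p\downto0$ and $\Delta(s)=\sdist\{0\}$ --- and integrating against $\NABM$ then exhibits $-n\NABM(1-\Lambda_f^*(\argdot,1))$ as exactly the nonrandom-nonatomic term $-\nrm\sum_j(1-e^{-f_j})$ with $\nrm=\Delta\NABM$ together with the ordinary term $-\eta(1-\prod_j\charbersym{f_j})$ of \cref{exchjointlaw}, where $\eta$ has intensity $\NABM(\dee s)\,p^{-1}\sdist(\dee p)$ on $(0,1]$. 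I expect this combinatorial bookkeeping, and in particular keeping the dust atom at $0$ straight, to be the main obstacle, since it is precisely what apportions mass between the nonrandom-nonatomic and ordinary components.

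With the two functionals shown equal for every $n$ and all $f_1,\dots,f_n\ge0$, the finite-dimensional distributions of $\process X \Nats$ coincide with those of an exchangeable sequence of Bernoulli processes directed by this $\rH$; in particular $\process X \Nats$ is exchangeable and conditionally i.i.d. As in the proof of \cref{thm:main}, a transfer argument (\cref{transfer}) produces a random measure $\rH$ on the original space with $\process X \Nats \given \rH \dist \BePLAW(\rH)$, establishing \cref{bepdisint}. Since $\EE^{\sigma(\rH)}X_n=\rH$, the disintegration theorem (\cite[Thm.~6.4]{FMP2}) and the strong law of large numbers give $n^{-1}S_n(A)\to\rH(A)$ a.s.\ for each fixed $A$; running this over a countable $\pi$-system generating $\bsa$ and invoking \cite[Lem.~1.17]{FMP2} yields \cref{definettiLLN} simultaneously and the a.s.\ uniqueness of $\rH$.

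Finally, the conditional law of $\ord\rH$ given $\cF_n$ follows by specializing \cref{exchpostlaw} to the components just found: take the disintegration $\lbm=\NABM$, $\lkernel_s(\dee p)=p^{-1}\sdist(\dee p)$ on $(0,1]$, and $\nrm=\Delta\NABM=\Delta\lbm$. Then $\lbm\otimes\PF n 0\lkernel=\NABM\otimes\PF{n-1}{-1}\sdist$ because $\PF n 0(p)\,p^{-1}=(1-p)^n p^{-1}=\PF{n-1}{-1}(p)$, and the posterior kernel simplifies since $\PF n k\lkernel_s=\PF{n-1}{k-1}\sdist$ on $(0,1]$: the term $\Delta(s)\PF{n-1}{k-1}\delta_0$ reinstates the atom of $\sdist$ at $0$ exactly when $k=1$ (mass $\Delta(s)=\sdist\{0\}$, as $\PF{n-1}{0}(0)=1$) and vanishes for $k\ge2$ (as $\PF{n-1}{k-1}(0)=0$), so that $\postkappa^{(n)}_{s,k}=\overline{\PF{n-1}{k-1}\sdist}$ as stated. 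Substituting into \cref{exchpostlaw} reproduces the displayed conditional Laplace functional, completing the proof.
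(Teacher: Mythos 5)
Your proposal is correct, and its overall architecture is the one the paper uses: identify the candidate directing measure by matching the characteristic functional of \cref{laplacefuncofjoint} against \cref{exchjointlaw} under the identifications $\nrm = \Delta\NABM$, $\lbm = \NABM$, $\lkernel_s(\dee p) = p\inv\sdist(s,\dee p)$; upgrade the distributional identity to \cref{bepdisint} and \cref{definettiLLN} by transfer, disintegration, and the LLN as in \cref{thm:main}; and obtain the conditional law by specializing \cref{exchpostlaw}, where your verification that $\Delta(s)\PF{n-1}{k-1}\delta_0 + \PF n k \lkernel_s$ reassembles $\PF{n-1}{k-1}\sdist$ on all of $[0,1]$ is exactly the point. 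The one place you genuinely deviate is the nonatomic piece: the paper conditions on the size $N_{1,n}$ of the block containing a uniformly chosen index, arrives at the $\Pr_{\eppf_s}[P_1]\,\Bin^{(n-1)}$ form in \cref{aosnhtintihusu}, and matches it against \cref{alternativeexchjointlaw} (which is stated without proof); you instead sum $n(1-\Lambda_f^*(s,1))$ directly over the blocks of $\Pi_n$, invoke $f(n,k)=\Pr\{h\inv(1)\in\Pi_n\}$, and recognize the binomial expansion of $1-\prod_{j}\charbersym{f_j}(s,p)$, which lets you match \cref{exchjointlaw} directly and split off the dust atom at $p=0$ by hand. The two computations are the same identity with different bookkeeping, but your route has the merit of being self-contained where the paper leans on an unproven lemma, at the cost of redoing the combinatorics that \cref{alternativeexchjointlaw} was meant to package once and for all.
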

\begin{proof}
Follows from comparison of \cref{thiuthiutihui,aosnhtintihusu} with \cref{exchjointlaw,alternativeexchjointlaw} after taking
$\nrm \defas \Delta \NABM$,
$\lbm \defas \NABM$, and 
$\lkernel(s,\dee p) \defas p\inv \sdist(s,\dee p)$.
\end{proof}

\begin{remark}
\cref{intromain,genbeta} follow immediately as corollaries.
\end{remark}

\begin{definition}
We will say $\rH$ is the \defn{random \hazard\ measure directing} $\process X \Nats$.
\end{definition}

\begin{remark}
Recall the family of EPPFs defined in \cref{rem:CFopus}, corresponding to a continuum of Blackwell-MacQueen urn schemes.
In this case we know that the correspondence with the one-parameter scheme implies that the directing random \hazard\ measure is a beta process.  
\end{remark}

\begin{remark}
Conditioned on $\cF_n$, the directing random \hazard\ measure $\rH$ is completely random with
\nonrandomcomp\ component $\Delta \NABM$,
fixed atoms $\FixedAtoms \cup \supp {S_n}$,
and ordinary component with \Levy\ measure
\[
p\inv (1-p)^n\,\sdist(s,\dee p) \, \NABM(\dee s)
\]
For $s \in \Atoms$,
 the distribution of $\rH\{s\}$ given $\cF_n$ is 
\[
\normalize {\PF {n} {S_n\{s\}} \thekernel (\BM \{s\}, \argdot)  }.
\]
Informally speaking, for $s \in \supp S_n \setminus \FixedAtoms$,
the distribution of $\BM \{s\}$ given $\cF_n$ is
\[
\postkappa^{(n)}(s,S_n\{s\}).
\]
\end{remark}

\subsection{Alternative characterizations of $\rH$}  

For ${k} \le {m} \le {n} \in \Nats$, define the partial sums
\[
\indexm X {n} {m} &\defas \sum_{{k'}=1}^{m} \indexmk X {n} {m} {k'},
&\indexk X {n} {k} &\defas \sum_{{m'}={k}}^{n} \indexmk X {n} {m'} {k}, 
\\
\indexm S {n} {m} 
 &\defas \sum_{{j}={m}}^{n} \indexm X {j} {m},
&\indexk S {n} {k} 
 &\defas \sum_{{j}={k}}^{n}  \indexk X {j} {k} .
\]
We will give two complementary characterizations of the directing random \hazard\ measure $\rH$ on $\bspace \setminus \Atoms$ using the identities
\[\label{mainidentity}
S_{n} = \sum_{{m}=1}^{n} \indexm S {n} {m} = \sum_{{k}=1}^{n} \indexk S {n} {k} \, .
\]
We will recover two classes of representations that have been described in the special case where $\eppf$ corresponds to a one- and two-parameter Chinese restaurant process.  The first class would be well described as \emph{size-biased} and corresponds with the first equality in \cref{mainidentity}.  Such a representation was given by Thibaux and Jordan \citep{Thibaux2007} in the one-parameter case and by \TG~\citep{TehGor2009a} in the two-parameter case.

The second equality in \cref{mainidentity} corresponds with the second class of representations.  These are the so-called \emph{stick-breaking} representations, although this name would have perhaps been best reserved for the Ferguson--Klass-type construction given by Teh, \Gorur\, and Ghahramani \citep{TehGorGha2007} of the $c=1$ instance of the one-parameter case.   For the second class, we recover the stick-breaking construction of Paisley, Zaas, Woods, Ginsburg, and Carin~\citep{PZWGC2010}, which was later revisited by Paisley, Blei, and Jordan~\citep{PBJ2012}
using the calculus of completely random measures.  When $\eppf$ corresponds with a two-parameter Chinese restaurant process, we recover the stick-breaking construction described by Broderick, Jordan, and Pitman~\citep{BJP12}.  
In a sense, both classes of alternative representations follow in trivial fashions from identities that we note here for the first time in the stick-breaking class.  However, the development below describes much more of the probabilistic structure.

As our focus is on the nonatomic part of $\rH$, 
define $\indexmk {\ord X} {n} {m} {k} \defas \indexmk X {n} {m} {k} (\argdot \setminus \FixedAtoms)$ 
and similarly define ${\ord X}_{n}$, $\indexm {\ord X} {n} {m}$, $\indexk {\ord X} {n} {k}$, $\indexm {\ord S} {n} {m}$, $\indexk {\ord S} {n} {k}$, and ${\ord S}_{n}$.
We begin by showing that the columns of $\indexm {\ord X} {n} {m}$ and $\indexk {\ord X} {n} {k}$ are independent:

\begin{thm}\label{theindplawethiu}
The columns
\[
  \ooof { \indexmk {\ord X} {n} {m} {k} \st {k} \le {m} \text{ and } {n} \ge {m} }, \qquad \text{ $m \in \Nats$, }
\]
are independent.  The same holds for the columns
\[
  \ooof { \indexmk {\ord X} {n} {m} {k} \st {m} \ge {k} \text{ and } {n} \ge {m} }, \qquad \text{ $k \in \Nats$. }
\]
\end{thm}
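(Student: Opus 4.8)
The plan is to establish both independence assertions by showing that the joint Laplace functional of the ordinary triangular array factorizes over the relevant columns, invoking \cref{altlaplacefuncofjoint}. Since the entries $\indexmk{\ord X}nmk = \indexmk X nmk(\argdot\setminus\FixedAtoms)$ are obtained by restricting away the fixed atoms, testing against a family $f = (\indexmk f jmk)_{k\le m\le j\le n}$ amounts to testing $\indexmk X nmk$ against functions vanishing on $\FixedAtoms$. For such $f$ the term $\countproc\BM\log\drag\Bin^{(n)}\Lambda_f$ of \cref{altlaplacefuncofjoint} drops out, because $\Lambda_f(s,\argdot)\equiv 1$ whenever $f(s)=0$, while the first term ignores $\FixedAtoms$ as $\NABM$ is nonatomic; hence
\[
\log \EE \exp \Oof{ -\textstyle\sum_{k\le m\le j\le n} \indexmk{\ord X}jmk \indexmk f jmk } = -n\,\NABM\,(1 - \Lambda_f(\argdot,1)).
\]
Because Laplace functionals determine joint laws and the product form characterizes independence, it suffices to prove that $s\mapsto 1-\Lambda_f(s,1)$ is additive under each column decomposition; summing the displayed identity over columns then exhibits the joint functional as the product of the per-column functionals. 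As only columns with index $\le n$ enter at level $n$, and any finite subfamily lies in such a level, independence of the full families follows on letting $n\to\infty$.

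For the row-index decomposition I would use that, with $\binom n1 = n$, the quantity $\Lambda_f(s,1) = n^{-1}\sum_{m_0=1}^n \lambda_f(s,e_{m_0})$ is an average over the unit vectors $e_{m_0}$, and that $\lambda_f(s,e_{m_0})$ involves only column $m_0$: the indicators $1(M_{k}=m')\,1(\tau_j=m')$ in the definition of $\lambda_f$ vanish unless $m'=m_0$. Writing $f^{(m)}$ for the family obtained by zeroing every column but the $m$-th, one gets $\lambda_{f^{(m)}}(s,e_{m_0})=\lambda_f(s,e_m)$ for $m_0=m$ and $=1$ otherwise, so $1-\Lambda_{f^{(m)}}(s,1)=n^{-1}(1-\lambda_f(s,e_m))$ and $\sum_m(1-\Lambda_{f^{(m)}}(s,1)) = 1-\Lambda_f(s,1)$. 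Conceptually this is transparent: restricted to $\bspace\setminus\FixedAtoms$ the superposition $R_n$ is a.s.\ simple, since the $Y_m$ are independent Poisson processes with nonatomic intensity, so each atom lies in a single $Y_m$ and contributes to only one row, and distinct rows are driven by the independent $Y_m$ together with the independent schemes $Z^s$.

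The token-index decomposition is the delicate step, and I expect it to be the main obstacle: the column an atom feeds is its token ordinal $K_{m_0}$, a function of the random scheme $Z^s$ rather than of the birth time alone, so the disjoint-support picture is unavailable and one must argue on $\lambda_f$ directly. Writing $f^{[k]}$ for the family retaining only token-index $k$, the single surviving indicator $1(M_k=m_0)$ gives, after interchanging the finite nonnegative sum with $\EE_{\eppf_s}$,
\[
1-\Lambda_{f^{[k]}}(s,1) = n^{-1}\,\EE_{\eppf_s}\oF{ 1(M_k<\infty)\of{ 1 - \exp\Oof{ -\textstyle\sum_j 1(\tau_j=M_k)\,\indexmk f j{M_k}k(s) } } }.
\]
On the other hand, for fixed $m_0$ the inner sum over $k$ in $\lambda_f(s,e_{m_0})$ has at most one nonzero term, namely $k=K_{m_0}$, present exactly when a new token is born at $m_0$; and the set of birth times (those $m_0$ at which a new token appears) is in bijection with the token ordinals through $m_0=M_k$, $K_{m_0}=k$. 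Under this reindexing $\sum_{m_0}(1-\lambda_f(s,e_{m_0}))$ becomes exactly $\sum_k\sum_{m_0}(1-\lambda_{f^{[k]}}(s,e_{m_0}))$, yielding $1-\Lambda_f(s,1)=\sum_k(1-\Lambda_{f^{[k]}}(s,1))$ and hence the factorization. The only points requiring care are this bijection and the interchange of summation with $\EE_{\eppf_s}$, both harmless since at each level $n$ everything is a finite sum of nonnegative terms.
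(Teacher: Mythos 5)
Your proposal is correct and follows essentially the same route as the paper: both reduce, via \cref{altlaplacefuncofjoint} restricted to the ordinary part, to the additivity of $s \mapsto 1-\Lambda_f(s,1)$ under the two column decompositions, and both verify that additivity through the per-unit-vector identities for $\lambda_f$ (your birth-time/token-ordinal bijection is precisely the content of the paper's ``straightforward to verify'' identity $1-\lambda_f(s,\indicator m n)=\sum_{k\le m}[1-\lambda_{f[k]}(s,\indicator m n)]$). The only nitpick is that your indicator should read $1(M_k\le n)$ rather than $1(M_k<\infty)$, which is harmless since the accompanying factor vanishes when $M_k>n$.
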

\begin{proof}
Let $f=\gprocess { \indexmk f {n} {m} {k} } {{k} \le {m} \le {n} \in \Nats}$ be a family of nonnegative measurable functions on $\bspace$
and, for every ${i} \le {n} \in \Nats$, 
define 
\[
\indexmk {f(i)} {n} {m} {k} =
\begin{cases}
\indexmk f {n} {m} {k}, & {m}=i \\
0, & \text{otherwise,}
\end{cases}
&&
\indexmk {f[i]} {n} {m} {k} =
\begin{cases}
\indexmk f {n} {m} {k}, & {k}=i \\
0, & \text{otherwise.}
\end{cases}
\]
For every $i \le {n} \in \Nats$, 
define $\indicator i {n} \in \{0,1\}^{n}$ by $\indicator i {n } (j) = 1(i=j)$.
It is then straightforward to verify that 
\[
\lambda_{f} (s,\indicator i {n})
= \lambda_{f(i)} (s,\indicator i {n}),
\]
\[
\lambda_{f(i)} (s,\indicator j {n}) = 1,  \text { for $i \neq j$,}
\]
and thus
\[\label{thuisuhtiu}
1 - \Lambda_{f}(s,1) =
  \sum_{m \le {n}} (1 - \Lambda_{f(m)}(s,1)).
\]
By \cref{altlaplacefuncofjoint,thuisuhtiu}, 
\[
&- \frac 1 {n} \log \EE \exp \Oof {-\textstyle \sum_{{k} \le {m} \le {j} \le {n}} 
                                \indexmk {\ord X} {j} {m} {k} \indexmk f {j} {m} {k} }
\\&\qquad =  \NABM (1 - \Lambda_f (\argdot, 1))                                
\\&\qquad = \sum_{{m} \le {n}} \NABM (1 - \Lambda_{f(m)} (\argdot, 1)),
\]
which establishes the first claim.  To establish the second claim,
it is straightforward to verify that, for ${m} \le {n} \in \Nats$,
\[
1 - \lambda_{f} (s,\indicator {m} {n})
=   \sum_{{k} \le {m}} 
     \oF { 1 - \lambda_{f[k]} (s, \indicator {m} {n}) }
\]
and therefore     
\[
1 - \Lambda_{f}(s,1) 
&= \frac 1 {n}  
  \sum_{{m} \le {n}} 
    \oF { 1 - \lambda_{f} (s,\indicator {m} {n}) }
\\&= 
\frac 1 {n} 
  \sum_{{m} \le {n}} 
   \sum_{{k} \le {m}} 
     \oF { 1 - \lambda_{f[k]} (s, \indicator {m} {n}) }
\\&= 
\frac 1 {n} 
   \sum_{{k} \le {n}} 
  \sum_{{m} = {k}}^{n} 
     \oF { 1 - \lambda_{f[k]} (s, \indicator {m} {n}) }
\\&= 
\frac 1 {n} 
   \sum_{{k} \le {n}} 
  \sum_{{m} \le {n}} 
     \oF { 1 - \lambda_{f[k]} (s, \indicator {m} {n}) }
\\&= \sum_{{k} \le {n}} \of { 1 -  \Lambda_{f[{k}]}(s,1) } 
              . \label{esathuethues}
\]
The result follows as above from \cref{altlaplacefuncofjoint,esathuethues}.
\end{proof}

We next establish the law of the partial averages 
$\frac 1 {n} \indexm {\ord S} {n} {m} $
and
$\frac 1 {n} \indexk {\ord S} {n} {k} $:

\begin{thm}\label{thesumlawetuhui}
Let ${m},{k} \le {n} \in \Nats$.  Then
\[
\log \EE \exp \Oof{ - {\textstyle \frac 1 {n} } \indexm {\ord S} {n} {m} f }
= - \int \oooooF {
       \int (1-e^{-p\, f(s)})\, \Pr_{\eppf_s} \Ooooof{ \frac {N_{\nUni_{m},{n}}}{{n}} \in \dee p \wedge \nUni_{m} > \nUni_{{m}-1} }
     }  \, \NABM(\dee s)
\]
and
\[
\log \EE \exp \Oof{ - \frac 1 {n} \indexk {\ord S} {n} {k} f }
= - \int \oooooF {
       \int (1-e^{-p\, f(s)})\, \Pr_{\eppf_s} \Ooooof{ \frac {N_{{k},{n}}}{{n}} \in \dee p }
     }  \, \NABM(\dee s) 
\]
\end{thm}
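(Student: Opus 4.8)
The plan is to reduce both identities to the master Laplace-functional formula for the nonatomic part of the \triangular\ array, specializing the family of test functions so as to isolate a single column (for the first display) or a single row (for the second). Recall from the proof of \cref{theindplawethiu} that, for any family $f = \gprocess{\indexmk f j m k}{k \le m \le j \le n}$ of nonnegative measurable functions,
\[
\log \EE \exp \Oof{ - \textstyle\sum_{k \le m \le j \le n} \indexmk {\ord X} j m k \indexmk f j m k } = - n\, \NABM\ooof{1 - \Lambda_f(\argdot, 1)} ,
\]
and that, by \cref{altlaplacefuncs}, $\Lambda_f(s,1) = \frac 1 n \sum_{i=1}^n \lambda_f(s, \indicator i n)$, the sum ranging over the unit vectors $\indicator i n$. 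So it suffices, in each case, to choose $f$ making the left-hand exponent equal to the desired partial average and then to evaluate $\Lambda_f(\argdot,1)$ explicitly.

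For the first display I would fix the column $m$ and set $\indexmk f j \mu k \defas \frac 1 n f\, 1(\mu = m)$ for all $k \le \mu \le j \le n$. Since $\indexm{\ord S}{n}{m} = \sum_{j = m}^n\sum_{k=1}^m \indexmk{\ord X}{j}{m}{k}$, the exponent collapses to $\frac 1 n \indexm{\ord S}{n}{m} f$. Substituting this $f$ into the definition of $\lambda_f$ leaves only the column $\mu = m$, so the exponent inside $\EE_{\eppf_s}$ becomes $\frac{f(s)}{n}\, y_m \ooof{\sum_{k=1}^m 1(M_k = m)}\ooof{\sum_{j=m}^n 1(\tau_j = m)}$; the first factor is the new-token indicator $1(\nUni_m > \nUni_{m-1})$ and the second is the multiplicity $N_{\nUni_m, n}$ of the token first appearing at time $m$. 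Evaluating at $\indicator i n$ gives $y_m = 1(i = m)$, so $\lambda_f(s,\indicator i n) = 1$ for $i \neq m$ and $\lambda_f(s,\indicator m n) = \EE_{\eppf_s}\exp\Oof{-\tfrac{f(s)}{n}\, 1(\nUni_m > \nUni_{m-1})\, N_{\nUni_m,n}}$. Hence $1 - \Lambda_f(s,1) = \frac 1 n \EE_{\eppf_s}\oF{1 - \exp\Oof{-\tfrac{f(s)}{n}\,1(\nUni_m > \nUni_{m-1}) N_{\nUni_m,n}}}$; because the integrand carries the factor $1(\nUni_m > \nUni_{m-1})$ and the exponent vanishes off that event, this equals $\frac 1 n \int (1 - e^{-p f(s)})\, \Pr_{\eppf_s}\Oof{\tfrac{N_{\nUni_m,n}}{n} \in \dee p \wedge \nUni_m > \nUni_{m-1}}$. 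Multiplying by $-n\, \NABM(\dee s)$ and integrating in $s$ reproduces the first display.

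For the second display I would instead fix the row $k$ and set $\indexmk f j \mu {k'} \defas \frac 1 n f\, 1(k' = k)$, which makes the exponent $\frac 1 n \indexk{\ord S}{n}{k} f$ since $\indexk{\ord S}{n}{k} = \sum_{\mu = k}^n\sum_{j=\mu}^n \indexmk{\ord X}{j}{\mu}{k}$. Now only the row $k' = k$ survives, and since $M_k$ takes a single value the sum over $\mu$ collapses to $\mu = M_k$ on the event $\{\nUni_n \ge k\}$, leaving $1(\nUni_n \ge k)\, y_{M_k}\, N_{k,n}$ in the exponent. Evaluating at $\indicator i n$ gives $y_{M_k} = 1(M_k = i)$; summing over $i$ inside $\EE_{\eppf_s}$, for each realization exactly the term with $i = M_k$ (on $\{\nUni_n \ge k\}$) differs from $1$, and the same bookkeeping yields $1 - \Lambda_f(s,1) = \frac 1 n \EE_{\eppf_s}\oF{1 - \exp\Oof{- f(s) N_{k,n}/n}}$, where the indicator $1(\nUni_n \ge k)$ disappears because $N_{k,n} = 0$ whenever fewer than $k$ tokens have appeared among the first $n$. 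The inner expectation equals $\int (1 - e^{-pf(s)})\, \Pr_{\eppf_s}\Oof{N_{k,n}/n \in \dee p}$, and multiplying by $-n\,\NABM(\dee s)$ gives the second display. The step I expect to be the main obstacle is precisely this explicit evaluation of $\Lambda_f(\argdot,1)$: one must correctly identify which first-arrival and multiplicity events each unit test vector $\indicator i n$ activates through the factors $1(M_k = \mu)$ and $1(\tau_j = \mu)$, and verify that the relevant indicator can be folded into or removed from the integral without altering its value.
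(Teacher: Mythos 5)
Your proposal is correct and is exactly the route the paper takes: its proof is the one\mbox{-}line remark that the result ``follows in a straightforward fashion from \cref{altlaplacefuncofjoint}'', and your choices $\indexmk f j \mu k = \tfrac 1 n f\,1(\mu=m)$ and $\indexmk f j \mu {k'} = \tfrac 1 n f\,1(k'=k)$, together with the identifications $\sum_{k\le m}1(M_k=m)=1(\nUni_m>\nUni_{m-1})$, $\sum_{j\ge m}1(\tau_j=m)=N_{\nUni_m,n}$, and the collapse of the sum over unit vectors $\indicator i n$, are precisely the bookkeeping that remark leaves implicit. The handling of the indicators (dropping $1(\nUni_n\ge k)$ because $N_{k,n}=0$ off that event, and keeping $1(\nUni_m>\nUni_{m-1})$ inside the law) is also right.
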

\begin{proof}
The result follows in a straightforward fashion from \cref{altlaplacefuncofjoint}.
\end{proof}

We can now determine the distributional limits of the partial averages:

Note that
\[
  \PF {{m}-1} 0 (p) \, \sdist (s,\dee p) = \Pr_{\eppf_s} \Ooof{ P_{\nUni_{m}} \in \dee p \wedge \nUni_{m} > \nUni_{{m}-1} }.
\]

\begin{thm}
As $n \to \infty$,
the partial averages $\frac 1 {n} \indexm {\ord S} {n} {m}$ converge in distribution to a random measure $\ord \rH_{m}$ 
given by
\[\label{charofindexm}
\log \EE \exp \Oof{ - \ord \rH_{m} f }
&= - \int \oooooF {
       \int (1-e^{-p\, f(s)})\, \Pr_{\eppf_s} \Ooof{ P_{\nUni_{m}} \in \dee p \wedge \nUni_{m} > \nUni_{{m}-1} }
     }  \, \NABM(\dee s) .
\\
&
= - (\NABM \otimes \PF {{m}-1} 0 \sdist) (1-e^{-f'}), 
\]
where $f'(s,p) = p\, f(s)$.
\end{thm}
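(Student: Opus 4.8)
The plan is to pass to the limit in the exact Laplace functional of $\frac 1 n \indexm{\ord S}{n}{m}$ already supplied by \cref{thesumlawetuhui}, namely
\[
\log \EE \exp \Oof{ - \frac 1 n \indexm{\ord S}{n}{m} f }
= - \int \oooooF{
       \int (1-e^{-p\, f(s)})\, \Pr_{\eppf_s} \Oooof{ \frac{N_{\nUni_m,n}}{n} \in \dee p \wedge \nUni_m > \nUni_{m-1} }
     } \, \NABM(\dee s),
\]
and to show that the right-hand side converges, as $n \to \infty$, to $-(\NABM \otimes \PF{m-1}0 \sdist)(1-e^{-f'})$ with $f'(s,p)=p\,f(s)$. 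By the continuity theorem for random measures \citep[][Chp.~12]{FMP2}, it is enough to establish this convergence for $f$ ranging over the convergence-determining class $C_K^+(\bspace)$ of nonnegative continuous functions of compact support, provided the limiting functional is indeed that of a genuine random measure, which I verify at the end.

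First I would handle the inner ($p$-) integral. On the event $\theset{\nUni_m > \nUni_{m-1}}$ the block $C_{\nUni_m}$ is precisely the block first entered at time $m$, so by the definition of the limiting frequencies in \cref{sec:esp}, and because $\nUni_m$ takes only the finitely many values $1,\dots,m$, we have $N_{\nUni_m,n}/n \to P_{\nUni_m}$ almost surely under $\Pr_{\eppf_s}$. Since $p \mapsto 1-e^{-p\, f(s)}$ is bounded and continuous on $[0,1]$, bounded convergence gives, for each fixed $s$,
\[
\int (1-e^{-p\, f(s)})\, \Pr_{\eppf_s} \Oooof{ \frac{N_{\nUni_m,n}}{n} \in \dee p \wedge \nUni_m > \nUni_{m-1} }
\to
\int (1-e^{-p\, f(s)})\, \Pr_{\eppf_s} \Oooof{ P_{\nUni_m} \in \dee p \wedge \nUni_m > \nUni_{m-1} }.
\]
To pass the limit through the outer ($s$-) integral, note that $1-e^{-p\, f(s)} \le p\, f(s) \le f(s)$ for $p \in [0,1]$ and that the inner sub-probability measure has total mass at most one, so the $s$-integrand is dominated by $f$; since $f$ is continuous with compact support and $\NABM$ is finite on compacts, $f$ is $\NABM$-integrable and dominated convergence applies.

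It remains to identify the limit. The identity recorded immediately before the statement (equivalently \cref{charofmu}, on recalling that $\PFF{m-1}=\PF{m-1}0$) reads
\[
\Pr_{\eppf_s} \Oooof{ P_{\nUni_m} \in \dee p \wedge \nUni_m > \nUni_{m-1} } = \PF{m-1}0(p)\, \sdist(s,\dee p),
\]
so the limiting right-hand side equals $-(\NABM \otimes \PF{m-1}0 \sdist)(1-e^{-f'})$, which is the asserted second form (the first form being just the display before this substitution). By \cref{campbells} read in reverse, this is the Laplace functional of the completely random measure $\ord\rH_m \defas \sum_{(s,p)\in\eta_m} p\,\delta_s$, where $\eta_m$ is a Poisson process on $\bspace \times (0,1]$ with intensity $\NABM \otimes \PF{m-1}0 \sdist$ — exactly the $\eta_m$ of \cref{superposthm} — so the limit is genuinely the Laplace functional of a random measure. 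Since the functionals converge on the convergence-determining class $C_K^+(\bspace)$ to that of $\ord\rH_m$, the continuity theorem yields the stated convergence in distribution.

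The one delicate step, and the main thing to get right, is the interchange of the limit with the outer integration against the merely $\sigma$-finite measure $\NABM$. Restricting the test functions to $C_K^+(\bspace)$ is precisely what licenses this: the $s$-integrand then vanishes off the compact support of $f$ and is bounded there by $f$, making the domination legitimate. Everything else — the inner bounded convergence (the range of $\nUni_m$ being contained in $\{1,\dots,m\}$) and the final identification of the limiting intensity — is routine.
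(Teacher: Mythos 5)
Your proposal is correct and follows essentially the same route as the paper: both start from the exact Laplace functional supplied by \cref{thesumlawetuhui}, pass to the limit in the inner integral by bounded convergence using $N_{\nUni_m,n}/n \to P_{\nUni_m}$ a.s.\ on $\theset{\nUni_m > \nUni_{m-1}}$, and identify the limiting intensity as $\PF{m-1}{0}\sdist$ via the identity recorded in \cref{charofmu}. The only real difference is in how the limit is pushed through the outer $\NABM$-integral: you restrict to $f\in C_K^+$ and dominate the $s$-integrand by $f$ (which tacitly uses that $\NABM$ is finite on compacts), whereas the paper keeps $f$ an arbitrary nonnegative measurable function, partitions $\bspace$ into pieces of finite $\NABM$-measure, and dominates by the constant $1$ on each piece.
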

\begin{proof}
By the continuity of $\exp$ and \citep[][Thm.~16.16]{FMP2},
it suffices to prove that 
\[
\lim_{{n} \to \infty} \log \EE \exp \Oof{ - \frac 1 {n} \indexm {\ord S} {n} {m} f }  
= \log \EE \exp \Oof{ - \ord \rH_{m} f },
\]
for every nonnegative measurable $f$.
Define
\[
g_{n}(r) \defas \int (1-e^{-p\, r})\, \Pr_{\eppf_s} \Ooooof{ \frac {N_{\nUni_{m},{n}}}{{n}} \in \dee p \wedge \nUni_{m} > \nUni_{{m}-1} },
\quad \text{ for $r \in \NNReals$.}
\]
On $\{\nUni_{m} \ge \nUni_{{m}-1}\}$, we have $\frac {N_{\nUni_{m},{n}}}{{n}} \to P_{\nUni_{m}}$ \as\ and thus in distribution, and so,
by the boundedness and continuity of $p \mapsto (1 - e^{-p\, r})$ for $r \in \NNReals$,
we have
\[
\lim_{{n} \to \infty} g_{n}(r)
= g(r) \defas \int (1-e^{-p\, r})\, \Pr_{\eppf_s} \Ooof{ P_{\nUni_{m}} \in \dee p \wedge \nUni_{m} > \nUni_{{m}-1} }.
\]
Let $f$ be a nonnegative measurable function, %
and let $\bspace_1,\bspace_2,\dotsc$ be a partition of $\bspace$ such that $\NABM (\bspace_i) < \infty $ for every $i \in \Nats$.
Then, $g_n \circ f \le 1$, and so, by dominated convergence,
\[
\lim_{{n} \to \infty} \int_{\bspace_i} (g_n \circ f) \,  \dee\NABM 
=  \int_{\bspace_i} (g \circ f) \, \dee\NABM
\]
for every $i \in \Nats$, and so then also on all of $\bspace$, completing the proof.
\end{proof}

Define $\gsd k(s,\dee p) = \Pr_{\eppf_s} \Ooof{ P_{k} \in \dee p }$ for $k \in \Nats$.

\begin{thm}
As $n \to \infty$, the partial averages  $\frac 1 {n} \indexk {\ord S} {n} {k}$  converge in distribution to a random measure $\ord \rH^{k}$ given by 
\[\label{charofindexk}
\log \EE \exp \Oof{ - \ord \rH^{k} f }
&= ( \NABM \circ \gsd k ) (1 - e^{- f'}),
\\
&= - \int \oooooF {
       \int (1-e^{-p\, f(s)})\, \Pr_{\eppf_s} \Ooof{ P_{k} \in \dee p }
     }  \, \NABM(\dee s).
\]
where $f'(s,p) = p\, f(s)$.
\end{thm}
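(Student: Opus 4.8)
The plan is to run the very same Laplace-functional argument just used for the partial averages $\frac{1}{n}\indexm{\ord S}{n}{m}$, since the present statement is its structural twin: the multiplicity $N_{k,n}$ of the $k$-th token plays the role previously played by $N_{\nUni_m,n}$, and the conditioning event $\{\nUni_m>\nUni_{m-1}\}$ is now absent. By the continuity of $\exp$ together with \citep[][Thm.~16.16]{FMP2}, it suffices to establish that
\[
\lim_{n\to\infty}\log\EE\exp\Oof{-\frac{1}{n}\indexk{\ord S}{n}{k}f} = \log\EE\exp\Oof{-\ord\rH^{k}f}
\]
for every nonnegative measurable $f$, the right-hand side being the quantity displayed in \cref{charofindexk}.

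First I would invoke \cref{thesumlawetuhui}, which writes the left-hand log-Laplace functional as $-\int(g_n\circ f)\,\dee\NABM$, where
\[
g_n(r) \defas \int(1-e^{-p\,r})\,\Pr_{\eppf_s}\Oof{\frac{N_{k,n}}{n}\in\dee p}, \qquad r\in\NNReals.
\]
By the definition of $P_k$ as the \as\ limiting relative frequency of the block $C_k$, we have $N_{k,n}/n\to P_k$ \as, hence in distribution; since $p\mapsto(1-e^{-p\,r})$ is bounded and continuous on $[0,1]$, this yields the pointwise limit $g_n(r)\to g(r)\defas\int(1-e^{-p\,r})\,\Pr_{\eppf_s}\{P_k\in\dee p\}$ for each $s\in\bspace$ and $r\in\NNReals$. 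Note that, in contrast to the companion proof, no restriction to an event is needed here, since $N_{k,n}/n$ converges unconditionally (indeed, when there are fewer than $k$ tokens, $N_{k,n}\equiv 0$ and $P_k=0$).

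Finally, since $g_n\circ f\le 1$ and $\NABM$ is $\sigma$-finite, I would fix a partition $\bspace_1,\bspace_2,\dotsc$ of $\bspace$ with $\NABM(\bspace_i)<\infty$ for every $i$ and apply dominated convergence on each $\bspace_i$ to conclude $\int_{\bspace_i}(g_n\circ f)\,\dee\NABM\to\int_{\bspace_i}(g\circ f)\,\dee\NABM$, and hence convergence over all of $\bspace$. The limit $-\int(g\circ f)\,\dee\NABM$ is exactly the right-hand side of \cref{charofindexk}, with $f'(s,p)=p\,f(s)$. I expect no real obstacle beyond bookkeeping; the only point needing care is the dominated-convergence step, which is precisely why the $\sigma$-finite partition is introduced, exactly as in the preceding result.
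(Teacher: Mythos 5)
Your proposal is correct and is exactly the argument the paper intends: its own proof consists of the single line ``The proof is similar to that of the previous theorem,'' and you have simply carried out that mirrored Laplace-functional argument (via \cref{thesumlawetuhui}, the \as\ convergence $N_{k,n}/n \to P_k$, and dominated convergence over a $\sigma$-finite partition) in full. Your observation that the conditioning event $\{\nUni_m > \nUni_{m-1}\}$ drops out because $N_{k,n}/n$ converges unconditionally is the one genuine point of difference from the companion proof, and you have handled it correctly.
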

\begin{proof}
The proof is similar to that of the previous theorem.
\end{proof}

\newcommand{\SM}{\hat S}
\newcommand{\LSM}{\hat H}
We now will establish that the partial averages converge almost surely, and relate the limiting partial averages to the directing random measure $\ord \rH$.
For every $m \le n \in \Nats$, let $\indexm {\SM} {n} {m}$ be the random measure given by
\[
\indexm {\SM} {n} {m} (A) \defas \countproc {\indexm {\ord S} {n} {m}} (A \times (0,1]), \qquad A \in \bsa.
\]
It is straightforward to verify that \as, for $A \in \bsa$,
\[
\indexm {\SM} {n} {m} (A) = \# ( A \cap \supp \indexm {\ord S} {n} {m}  ).
\]
In the same manner, define $\indexk {\SM} {n} {k}$ for every $k \le n \in \Nats$. 
By construction we have that $\indexm {\SM} {n+1} {m} \ge \indexm {\SM} {n} {m}$ for all $n \in \Nats$ and so the limit
\[
\LSM_{m} \defas \lim_{n} \indexm {\SM} {n} {m}
\]
exists almost surely and is itself a random measure.  The same holds of
$\LSM^{k} \defas \lim_{n} \indexk {\SM} {n} {k}$.

\begin{thm}\label{limitingpoissonsupport}
Let $ {k},{m} \in \Nats$.  The limiting supporting measures
$\LSM_{m}$ and $\LSM^{k}$ are Poisson processes with intensities 
$ \oof{ \sdist \PF {{m}-1} {0} } \NABM$ 
and $ \gsd k \NABM$, respectively.
\end{thm}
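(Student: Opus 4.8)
The plan is to realize each supporting measure as the $\bspace$-projection of the mass-representation of a completely random measure, use that such projections are Poisson, and then pass to the monotone limit in $n$ through Laplace functionals. Fix $m\le n$. By \cref{thesumlawetuhui} the column average $\frac1n\indexm{\ord S}{n}{m}$ is completely random, supported on $\bspace\setminus\FixedAtoms$ with $\indexm{\ord S}{n}{m}\{s\}=0$ a.s.\ for every $s$, and has no nonrandom component; hence \cref{thm:crmrep} shows its mass process $\countproc{\indexm{\ord S}{n}{m}}$ is a Poisson process on $\bspace\times(0,1]$. Consequently $\indexm{\SM}{n}{m}(\argdot)=\countproc{\indexm{\ord S}{n}{m}}(\argdot\times(0,1])$ is a Poisson process on $\bspace$, whose intensity is the $\bspace$-marginal of the \Levy\ measure exhibited in \cref{thesumlawetuhui}. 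The identical reasoning applies to $\indexk{\SM}{n}{k}$.

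\textbf{The measures $\LSM_m$.} Here the limit is already attained at $n=m$. Unwinding the \triangular\ definition gives $\indexm{\ord S}{n}{m}\{s\}=N_{\nUni_m,n}\,\nY_m\{s\}\,1(\nUni^s_m>\nUni^s_{m-1})$, so on $\{\nUni^s_m>\nUni^s_{m-1}\}$ the count $N_{\nUni_m,n}$ is automatically positive for every $n\ge m$ and the support is exactly $\{s:\nY_m\{s\}=1,\ \nUni^s_m>\nUni^s_{m-1}\}$, independent of $n$. Thus by the first paragraph $\LSM_m=\indexm{\SM}{m}{m}$ is Poisson with intensity $s\mapsto\Pr_{\eppf_s}\{\nUni_m>\nUni_{m-1}\}$ against $\NABM$, and \cref{charofmu} identifies this probability as $\sdist\PFF{m-1}$, the asserted intensity.

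\textbf{The measures $\LSM^k$.} Now the supports genuinely grow with $n$. By the first paragraph $\indexk{\SM}{n}{k}$ is Poisson with intensity $\Pr_{\eppf_s}\{N_{k,n}\ge1\}\,\NABM(\dee s)=\Pr_{\eppf_s}\{\nUni_n\ge k\}\,\NABM(\dee s)$, so by \cref{campbells}, for measurable $h\ge0$,
\[
\EE\exp\oof{-\indexk{\SM}{n}{k}h}=\exp\oof{-\int(1-e^{-h(s)})\,\Pr_{\eppf_s}\{\nUni_n\ge k\}\,\NABM(\dee s)}.
\]
Since $\indexk{\SM}{n}{k}\uparrow\LSM^k$, the left side converges to $\EE\exp(-\LSM^k h)$ by dominated convergence, while $\Pr_{\eppf_s}\{\nUni_n\ge k\}\uparrow\Pr_{\eppf_s}\{C_k\neq\emptyset\}$ lets monotone convergence handle the right side. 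The limit is the Laplace functional of a Poisson process with intensity $\Pr_{\eppf_s}\{C_k\neq\emptyset\}\,\NABM(\dee s)$, and as Laplace functionals determine the law of a point process, $\LSM^k$ is this Poisson process, i.e.\ has intensity $\gsd k\NABM$.

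\textbf{Main obstacle.} The crux is the limiting step for $\LSM^k$: that the a.s.\ monotone limit of the Poisson processes $\indexk{\SM}{n}{k}$, with increasing intensities, is again Poisson. The Laplace-functional computation settles this provided the limiting intensity is $\sigma$-finite on $\bspace$, which holds since $\NABM$ is $\sigma$-finite and $\Pr_{\eppf_s}\{C_k\neq\emptyset\}\le1$. The one point requiring care is the identification of $\lim_n\Pr_{\eppf_s}\{\nUni_n\ge k\}$---the probability that the $k$-th block of appearance is eventually nonempty---with the total mass $\gsd k(s,(0,1])$ of $\gsd k(s,\argdot)$; for the EPPFs of principal interest (the one- and two-parameter Chinese restaurant processes) every block of appearance has positive asymptotic frequency, so $\{C_k\neq\emptyset\}$ and $\{P_k>0\}$ agree up to a null event and the two expressions coincide.
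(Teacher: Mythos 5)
Your proof is correct and follows essentially the same route as the paper, whose proof simply defers to the Laplace-functional computations behind \cref{charofindexm,charofindexk}: you identify the finite-$n$ support processes as Poisson via the L\'evy measures exhibited in \cref{thesumlawetuhui} (equivalently, by thinning the underlying Poisson processes $Y_m$) and then pass to the monotone limit in $n$. Your closing observation---that the limiting intensity for $\LSM^k$ is really $\Pr_{\eppf_s}\theset{C_k \neq \emptyset}\,\NABM(\dee s)$, which agrees with the stated $\gsd k \NABM$ only once dust and the possibility of finitely many blocks are accounted for---is a genuine point of care that the paper's one-line proof glosses over.
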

\begin{proof}
Follows from the proofs of \cref{charofindexm,charofindexk}.
\end{proof}

\begin{remark}
Note that, in contrast, the support of $\ord \rH_{m}$ and $\ord \rH^{k}$, i.e., $\countproc{ \ord \rH_{m}}( \argdot \times (0,1])$ and  $\countproc {\ord \rH^{k}}(\argdot \times (0,1])$, are Poisson processes with intensities 
$ \oof{ \sdist \ooF{ 1_{(0,1]}\PF {{m}-1} {0} } } \NABM$ 
and $\oof { \gsd k 1_{(0,1]} } \NABM$, respectively.  In particular, the support of the limiting partial averages no longer contains points that were associated with \emph{dust} as they appear only once in the urn scheme and so their frequency converges to zero.
\end{remark}

\begin{cor}\label{disjointlimitingpoissonsupport}
The limiting supporting measures $\LSM_{m}$, for ${m} \in \Nats$, are mutually singular.  The same is true of $\LSM^{k}$, for ${k} \in \Nats$.
\end{cor}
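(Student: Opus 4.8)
The plan is to reduce both claims to a single fact about the directing data: writing $\ord Y_m \defas Y_m(\argdot \setminus \FixedAtoms)$ for the ordinary component of the $m$-th Bernoulli process, the processes $\ord Y_1, \ord Y_2, \dotsc$ are i.i.d.\ Poisson processes on $\bspace \setminus \FixedAtoms$ with common intensity $\NABM$, and $\NABM$ is nonatomic. First I would record that, consequently, the event $N \defas \{ \exists\, m \neq m' \text{ and } s \in \bspace\setminus\FixedAtoms \st \ord Y_m\{s\} > 0 \text{ and } \ord Y_{m'}\{s\} > 0 \}$ is null: conditioning on $\ord Y_m$, whose atoms form a countable set, each fixed location $s$ satisfies $\Pr\{\ord Y_{m'}\{s\}>0\} = 1 - e^{-\NABM\{s\}} = 0$ by nonatomicity, so $\Pr(N) = 0$ after summing over the countably many pairs $(m,m')$ and atoms. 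Since $\LSM_m$ and $\LSM^k$ are simple point processes on a Borel space (cf.\ \cref{limitingpoissonsupport}), mutual singularity of any subfamily is equivalent to a.s.\ disjointness of their atom sets, so it suffices to show that on $N^c$ the supports $\supp\LSM_m$ (resp.\ $\supp\LSM^k$) are pairwise disjoint.

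For the row family I would trace the support of $\indexm {\ord S} n m$ back through the triangular array. Summing the defining identity $\indexmk {\ord X} j m k\{s\} = 1(\ftt s k = m)\,1(\tau^s_j = m)\,\ord Y_m\{s\}$ over $k \le m$ collapses $\sum_{k} 1(\ftt s k = m) = 1(\tau^s_m = m)$, giving $\indexm {\ord S} n m\{s\} = 1(\tau^s_m = m)\,\ord Y_m\{s\}\,\sum_{j=m}^n 1(\tau^s_j = m)$. In particular every atom of $\indexm {\ord S} n m$, and hence of $\LSM_m = \lim_n \indexm {\SM} n m$, is an atom of $\ord Y_m$, so $\supp\LSM_m \subseteq \supp\ord Y_m$. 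On $N^c$ the sets $\supp \ord Y_m$ are pairwise disjoint, and the first claim follows at once.

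For the column family the same bookkeeping applies, but the relevant time index is now random. For fixed $k$ there is a single first-arrival time $M^s_k \defas \ftt s k$ of the $k$-th token in $Z^s$ (equal to $\infty$ if fewer than $k$ tokens ever appear), so summing $\indexmk {\ord X} j m k$ over $m$ selects $m = M^s_k$ and yields $\indexk {\ord S} n k\{s\} = \ord Y_{M^s_k}\{s\}\,\sum_{j=k}^n 1(M^s_k \le j)\,1(\tau^s_j = M^s_k)$. Thus each atom $s$ of $\LSM^k$ satisfies $M^s_k < \infty$ and lies in $\supp \ord Y_{M^s_k}$, and, since the first-arrival times of distinct tokens are strictly ordered, $M^s_k < M^s_{k'}$ whenever $k < k'$. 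Consequently a location $s$ lying in both $\supp\LSM^k$ and $\supp\LSM^{k'}$ with $k \neq k'$ would be a common atom of the two distinct processes $\ord Y_{M^s_k}$ and $\ord Y_{M^s_{k'}}$, i.e.\ would witness the event $N$; on $N^c$ no such $s$ exists, proving the second claim.

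The main obstacle is the column case: because the index $M^s_k$ identifying which $\ord Y_m$ an atom comes from is itself random and varies with $s$, one cannot simply confine $\LSM^k$ to a single $\ord Y_m$ as in the row case. The resolution is to push all the randomness of the token arrivals into the deterministic, per-realization support inclusions and to absorb the only probabilistic input into the single null event $N$, which depends on the $Y_m$ alone; one should check that these inclusions are genuine pathwise consequences of the array's definition (including the degenerate case $M^s_k = \infty$), so that the argument never needs the joint law of $(Z^s)$ and $(Y_m)$ beyond the a.s.\ disjointness of the $\ord Y_m$.
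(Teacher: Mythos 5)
Your proof is correct, but it takes a genuinely different route from the paper's. The paper's proof is a one-liner: by \cref{theindplawethiu} the families $\gprocess{\LSM_m}{m\in\Nats}$ and $\gprocess{\LSM^k}{k\in\Nats}$ each consist of independent point processes, and by \cref{limitingpoissonsupport} each member is Poisson with intensity absolutely continuous with respect to the diffuse measure $\NABM$; the superposition of two independent Poisson processes with diffuse intensities is again Poisson with diffuse intensity, hence a.s.\ simple, which forces the supports to be a.s.\ disjoint. You instead argue pathwise: every atom of $\indexm {\ord S} {n} {m}$ is an atom of $\ord Y_m$, every atom of $\indexk {\ord S} {n} {k}$ is an atom of $\ord Y_{\ftt s k}$ with the arrival times $\ftt s k$ strictly increasing in $k$, so a common atom of two distinct limits would witness a common atom of two distinct members of the i.i.d.\ Poisson family $\gprocess{\ord Y_m}{m\in\Nats}$ --- a null event because $\NABM$ is diffuse. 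What your route buys: it uses neither the independence of the limits nor their Poisson characterization, only the elementary a.s.\ disjointness of the supports of the $\ord Y_m$, and it explains \emph{why} the supports separate (row limits live on distinct $Y_m$; column limits live on distinct, token-ordered $Y_{\ftt s k}$). What it costs: the support inclusions are statements about the triangular array on the extended space produced by the transfer argument, so, as you yourself flag, they must be read as holding $\cQ{Y}^*$-surely for each realization of $Y$ and then a.s.\ after integration over the law of $Y$; the paper's argument sidesteps this bookkeeping entirely by working only with the joint law of the limits. Both arguments are sound.
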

\begin{proof}
Follows from independence of the measures (\cref{theindplawethiu}) and the fact that they are Poisson processes (\cref{thesumlawetuhui}).
\end{proof}

\begin{thm}
Let ${m},{k} \in \Nats$.
With probability one, for all $A \in \bsa$,
we have 
\[
\lim_{{n} \to \infty} \frac 1 {n} \indexm {\ord S} {n} {m} (A) = \ord H ( A \cap \supp \LSM_{m})
\quad \text{and} \quad
\lim_{{n} \to \infty} \frac 1 {n} \indexk {\ord S} {n} {k} (A) = \ord H ( A \cap \supp \LSM^{k}).
\]
\end{thm}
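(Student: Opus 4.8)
The plan is to reduce the statement to the behavior of $\ord S_n$ on the (random, locally finite) support of each limiting Poisson process, combining the decomposition \cref{mainidentity}, the de~Finetti law of large numbers \cref{definettiLLN}, and the mutual singularity from \cref{disjointlimitingpoissonsupport}. I would treat the column index $m$; the argument for $k$ is identical after replacing $\LSM_m$, $\indexm {\ord S} n m$, $\indexm \SM n m$ by $\LSM^k$, $\indexk {\ord S} n k$, $\indexk \SM n k$. Two structural facts drive everything. Since $\indexm \SM {n+1} m \ge \indexm \SM n m$, the supports are nested and $\supp \indexm {\ord S} n m \subseteq \supp \LSM_m$ for every $n$, so $\indexm {\ord S} n m$ is carried by $\supp \LSM_m$. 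Restricting \cref{mainidentity} to the nonatomic part gives $\ord S_n = \sum_{m'=1}^n \indexm {\ord S} n {m'}$, and \cref{maindefinetti}, through \cref{definettiLLN}, gives $\frac 1 n \ord S_n \to \ord H$ almost surely.

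The heart of the argument is a pathwise identity valid on the almost sure event of \cref{disjointlimitingpoissonsupport}. Fix $A \in \bsa$ and write $B \defas A \cap \supp \LSM_m$. Since $\indexm {\ord S} n m$ is carried by $\supp \LSM_m$, its restriction to $A$ agrees with its restriction to $B$. On the other hand, decomposing $\ord S_n(B) = \sum_{m'} \indexm {\ord S} n {m'}(B)$, every term with $m' \neq m$ vanishes, because $\supp \indexm {\ord S} n {m'} \subseteq \supp \LSM_{m'}$ is disjoint from $B \subseteq \supp \LSM_m$ by the mutual singularity of the limiting supports. Hence $\indexm {\ord S} n m(A) = \ord S_n(A \cap \supp \LSM_m)$ for every $n$.

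It then remains to pass to the limit in $\frac 1 n \ord S_n(A \cap \supp \LSM_m)$. For $A$ with $\NABM(A) < \infty$ the set $A \cap \supp \LSM_m$ is almost surely finite, since by \cref{limitingpoissonsupport} $\LSM_m$ is Poisson with intensity $\of{\sdist \PF {m-1} 0}\NABM \le \NABM$. Writing this finite set as $\{s_1,\dots,s_J\}$, we have $\frac 1 n \ord S_n(A \cap \supp \LSM_m) = \sum_{i=1}^J \frac 1 n \ord S_n\{s_i\}$, and each atomic average converges to $\ord H\{s_i\}$, the convergence in \cref{definettiLLN} holding simultaneously on the countable $\pi$-system generating $\bsa$ used in the proof of \cref{maindefinetti}. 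Summing the finitely many terms yields $\frac 1 n \indexm {\ord S} n m(A) \to \ord H(A \cap \supp \LSM_m)$. For arbitrary $A \in \bsa$ I would partition $\bspace$ into countably many sets of finite $\NABM$-measure and invoke the same $\pi$-system argument to obtain the convergence simultaneously for all $A$.

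The main obstacle is the pathwise identity $\indexm {\ord S} n m(A) = \ord S_n(A \cap \supp \LSM_m)$: this is exactly where both structural facts are indispensable, since the nestedness of the supports rules out an atom lying in $\supp \indexm {\ord S} n {m'}$ for a wrong $m'$ at finite $n$, while the probabilistic mutual singularity of \cref{disjointlimitingpoissonsupport} guarantees that distinct columns do not share atoms in the limit. Once this identity is secured, the convergence is just the law of large numbers on a finite, stabilized support, and the passage to general $A$ is routine.
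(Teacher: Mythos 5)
Your proposal is correct and follows essentially the same route as the paper: both arguments rest on the decomposition $\ord S_n = \sum_{m'} \indexm{\ord S}{n}{m'}$, the mutual singularity of the limiting supports from \cref{disjointlimitingpoissonsupport}, the nestedness $\indexm{\SM}{n}{m} \upto \LSM_m$ to identify $\indexm{\ord S}{n}{m}(A)$ with $\ord S_n(A \cap \supp \LSM_m)$, and the law of large numbers \cref{definettiLLN} together with a countable generating collection to get the statement simultaneously for all $A \in \bsa$. The only difference is cosmetic — you establish the pathwise identity first and then pass to the limit, whereas the paper starts from $\ord H(A \cap \supp \LSM_m)$ and unwinds it — and your extra remark about the a.s.\ finiteness of $A \cap \supp \LSM_m$ when $\NABM(A) < \infty$ is a harmless elaboration of the same limit step.
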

\begin{proof}
We prove the former case.  The latter follows identically.
Let $B$ be the support of $\LSM_{m}$, and let $A \in \bsa$.
Then, almost surely,
\[
\ord H (A \cap B)
&= \lim_{n \to \infty} \frac 1 n \ord S_{n} (A \cap B)
\\&= \lim_{n \to \infty} \frac 1 n \sum_{{m'}=1}^{n} \indexm {\ord S} {n} {m'} (A \cap B) \label{ethiutih}
\\&= \lim_{n \to \infty} \frac 1 n \indexm {\ord S} {n} {m} (A \cap B) \label{tihsuhtiuu}
\\&= \lim_{n \to \infty} \frac 1 n \indexm {\ord S} {n} {m} (A) \label{tihsuhtiuueue}
         ,
\]
where \cref{tihsuhtiuu} follows from \cref{disjointlimitingpoissonsupport}, and \cref{tihsuhtiuueue} follows from the fact that the fact that $\indexm {\SM} {n} {m} \,\upto\, \LSM_{m}$.
A Borel probability measure on $\bspace$ is characterized by its values on a countable collection $\cF \subseteq \bsa$ of measurable sets, and so the above holds \as\ simultaneously for any such collection $\cF$, and so it holds simultaneously for $\bsa$.
\end{proof}

Post facto, we may now take $\ord \rH_{m}$ and $\ord \rH^{k}$ to be not only the distributional limits of the partial averages but also their almost sure limits, i.e.,
\[
\ord \rH_{m} = \lim_{{n} \to \infty} \frac 1 {n} \indexm {\ord S} {n} {m}
\qquad \text{and} \qquad
\ord \rH^{k} = \lim_{{n} \to \infty} \frac 1 {n} \indexk {\ord S} {n} {k}
\]
almost surely, where the limits are understood in the strong sense.

\begin{remark}
The above development for $\ord \rH_{m}$ can be given a more direct proof.  Let ${m}\in \Nats$.  It is straightforward to show that, conditioned on 
$ \indexm {\ord X} {m} {m} $, the Bernoulli processes $ \indexm {\ord X} {m+1} {m}, \indexm {\ord X} {m+2} {m},\dotsc$ are conditionally i.i.d.  From this fact, the existence of the limiting partial average follows from the conditional version of the law of large numbers.  There is no obvious analogue of this approach for $\ord \rH^{k}$, hence the alternative development above.
\end{remark}

We now relate $\sum_{{m}=1}^\infty \ord \rH_{m}$ and $\sum_{{k}=1}^\infty \rH^{k}$ to $\ord \rH$.

\begin{thm}\label{thm:altreps}
With probability one,
\[\label{thedecomp}
\ord \rH 
= \Delta \NABM + \sum_{{m}=1}^\infty \ord \rH_{m} 
= \Delta \NABM + \sum_{{k}=1}^\infty \ord \rH^{k}.
\]
\end{thm}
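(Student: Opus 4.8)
The plan is to prove the first equality by identifying $\sum_{m=1}^\infty \ord\rH_m$ with the ordinary Poisson part of $\ord\rH$, and then to run a verbatim argument for the second equality with $\ord\rH^k$ in place of $\ord\rH_m$. Recall from \cref{genbeta} that almost surely $\ord\rH = \Delta\NABM + P$, where $P \defas \sum_{(s,\pp)\in\eta} \pp\,\delta_s$ is purely atomic with $\eta$ Poisson of intensity $\NABM(\dee s)\,\pp\inv\sdist(\dee \pp)$, and $\Delta\NABM$ is nonatomic. It therefore suffices to show $\sum_{m=1}^\infty\ord\rH_m = P$ almost surely, after which adding the deterministic measure $\Delta\NABM$ yields the claim.

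First I would record the domination $\sum_m\ord\rH_m \le P$. By the theorem established immediately above (identifying the almost-sure strong limits of the column averages), $\ord\rH_m = \ord\rH(\argdot\cap B_m)$ almost surely, where $B_m \defas \supp\LSM_m$; by \cref{limitingpoissonsupport} each $\LSM_m$ is a Poisson process, so each $B_m$ is almost surely countable, and by \cref{disjointlimitingpoissonsupport} the $B_m$ are almost surely pairwise disjoint. Countable additivity of $\ord\rH$ then gives $\sum_m\ord\rH_m = \ord\rH(\argdot\cap\bigcup_m B_m)$ almost surely. Since $\bigcup_m B_m$ is countable and $\Delta\NABM$ is nonatomic, the nonatomic part contributes nothing to this restriction, whence $\sum_m\ord\rH_m = P(\argdot\cap\bigcup_m B_m) \le P$ almost surely.

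To close the gap I would use an expectation squeeze on sets of finite mean. Fix $A\in\bsa$ with $\NABM(A)<\infty$. Using the intensity of $\ord\rH_m$ read off from \cref{charofindexm} together with Tonelli and the identity $\sum_{m\ge1}(1-\pp)^{m-1} = \pp\inv$ on $(0,1]$,
\[
\EE \textstyle\sum_{m}\ord\rH_m(A)
&= \NABM(A)\sum_m\int_{(0,1]} \pp\,(1-\pp)^{m-1}\,\sdist(\dee \pp)
\\
&= \NABM(A)\int_{(0,1]}\sdist(\dee \pp) = (1-\Delta)\,\NABM(A),
\]
which equals $\EE P(A) = \NABM(A)\int_{(0,1]} \pp\cdot \pp\inv\sdist(\dee \pp)$ (finite, since $\NABM(A)<\infty$), using $\sdist\Oof{0} = \Delta$. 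As $\sum_m\ord\rH_m(A)\le P(A)$ almost surely with equal finite expectations, the difference is a nonnegative random variable of zero mean, hence $\sum_m\ord\rH_m(A) = P(A)$ almost surely. Because $\NABM$ is $\sigma$-finite and $\bspace$ is \general, $\bsa$ is generated by a countable $\pi$-system of sets of finite $\NABM$-measure; the identity therefore holds almost surely simultaneously on this $\pi$-system and, by uniqueness of measures (\citep[][Lem.~1.17]{FMP2}), as measures almost surely.

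The second equality follows identically with $B^k\defas\supp\LSM^k$ and $\gsd k = \law(P_k)$: \cref{limitingpoissonsupport,disjointlimitingpoissonsupport} give countable disjoint supports, so $\sum_k\ord\rH^k = P(\argdot\cap\bigcup_k B^k)\le P$, while $\EE\sum_k\ord\rH^k(A) = \NABM(A)\sum_k\EE P_k = (1-\Delta)\,\NABM(A) = \EE P(A)$, using $\sum_k\EE P_k = \EE\sum_k P_k = 1-\Delta$ from $\Delta = \EE(1-\sum_k P_k)$. The main obstacle here is conceptual rather than computational: one cannot pass the exact identity $\frac 1n\ord S_n = \sum_{m\le n}\frac1n\indexm{\ord S}{n}{m}$ to the limit termwise, since doing so would silently discard the dust contribution $\Delta\NABM$ that is spread across the columns whose index is comparable to $n$. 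The purpose of the countable-and-disjoint support structure is precisely to certify that $\sum_m\ord\rH_m$ captures all and only the atomic Poisson mass $P$, so that the remaining mass is exactly the deterministic nonatomic term $\Delta\NABM$.
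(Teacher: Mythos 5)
Your proof is correct, and its skeleton matches the paper's: both arguments ultimately rest on the a.s.\ identification $\ord\rH_m=\ord\rH(\argdot\cap\supp\LSM_m)$ from the preceding theorem, the mutual singularity of the supports, and the two series identities $p\inv=\sum_{i\ge 0}(1-p)^i$ and $\sdist(\dee p)=p\,\bigl(\sum_k\gsd k\bigr)(\dee p)$ on $(0,1]$. Where you diverge is in how the ``nothing is left over'' step is closed. The paper treats $\sum_m\ord\rH_m$ as a superposition of independent completely random measures, computes its L\'evy measure as the sum of the component L\'evy measures, and observes that this sum is exactly $\NABM\otimes\PF{-1}{-1}\sdist$, i.e.\ the L\'evy measure of the Poisson part of $\ord\rH$; the a.s.\ conclusion is then implicit in the fact that a submeasure of $P$ with the full law of $P$ must equal $P$. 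You instead make the domination $\sum_m\ord\rH_m=P(\argdot\cap\bigcup_m B_m)\le P$ explicit and close with a first-moment squeeze on sets of finite $\NABM$-measure, followed by a $\pi$-system argument. Your route is more elementary (it needs only intensities, not full L\'evy measures) and is more careful about upgrading distributional agreement to almost-sure equality, which the paper leaves terse; the paper's route has the advantage of identifying the law of each partial sum along the way, which is reused elsewhere (e.g.\ in the truncation bound). One small point worth stating explicitly in your write-up: the pairwise disjointness of the supports $B_m$ follows from independence of the $\LSM_m$ together with the nonatomicity of their intensities $(\sdist\PF{m-1}{0})\NABM$, which is exactly what \cref{disjointlimitingpoissonsupport} packages as mutual singularity.
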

\begin{proof}
Recall that $\ord \rH$ is the sum of a \nonrandomcomp\ component $\Delta \NABM$ and a completely random measure with \Levy\ measure $\NABM \otimes \PF {-1} {-1} \sdist$.  But, the infinite sum $\sum_{{m}=1}^\infty \ord \rH_{m}$ is also completely random with \Levy\ measure given by the infinite sum of the component \Levy\ measures, yielding 
\[\textstyle
\sum_{{m}=1}^\infty \ooof { \NABM \otimes \PF {{m}-1} 0 \sdist }
= \NABM \otimes (\sum_{{m}=1}^\infty \PF {{m}-1} 0) \sdist
= \NABM \otimes \PF {-1} {-1} \sdist,
\]
where the last equality follows from the identity 
\[\label{theidentityA}
p\inv = \sum_{{i}=0}^\infty (1-p)^{i}.
\]
Similarly, $\sum_{{k}=1}^\infty \ord \rH^{k}$ is completely random with \Levy\ measure
\[\textstyle
\sum_{{k}=1}^\infty \ooof { \NABM \otimes \gsd {k} }
= \NABM \otimes (\sum_{{k}=1}^\infty \gsd {k} )
= \NABM \otimes \PF {-1} {-1} \sdist,
\]
where the last equality follows from the identity
\[\label{theidentityB}
\EE f(P_1) = \EE \ooof { \textstyle \sum_{{k}=1}^\infty P_{k} f(P_{k}) }, \quad \text{ for measurable $f$ satisfying $f(0)=0$.}
\]
In particular, \cref{theidentityB}
implies that $\sdist(\dee p) = p\, (\sum_{k} \gsd {k})(\dee p)$ on $(0,1]$.
\end{proof}

\begin{remark}
\cref{simplestickbc,superposthm} follow immediately as a corollaries.   As noted in the Introduction following \cref{simplestickbc}, this result allows one to develop stick-breaking representations like those given by Paisley et al.~\citep{PZWGC2010} for the beta process.
\end{remark}

\begin{remark}\label{remarkaoeu}
Note that, on their own, the identities given in \cref{theidentityA,theidentityB} already yield, via a transfer argument and the calculus of completely random measures, the decomposition given by \cref{thedecomp}.
However, the relationship between $\ord \rH_{m}$ and $\ord \rH^{k}$, the limiting partial averages, and the underlying urn schemes is not revealed by this approach.  In particular, the random measures $\ord \rH^{k}$ are not measurable with respect to the process $\process X \Nats$. 
\end{remark}

\begin{remark}
In unpublished, independent work by James, Orbanz, and Teh~\citep{JOT}, 
an identity related to \cref{theidentityB} for the case $\Delta=0$ is shown to give rise to the 
decomposition of $\ord \rH$ in terms of $\ord \rH^{k}$ as in \cref{thedecomp} via a transfer argument (see \cref{remarkaoeu}). 
\end{remark}

\begin{remark}
To understand where the \nonrandomcomp\ component $\Delta\NABM$ arises, 
let $M \in \Nats$
and $A \in \bsa$. 
Then, with probability one,
\[
\ord H (A)
&= \lim_{n \to \infty} \frac 1 n S_{n} (A) 
\\&= \lim_{n \to \infty} \sum_{{m}=1}^{n} \frac 1 n \indexm S {n} {m} (A)
\\&= \sum_{{m}=1}^M \ord H_m (A)
         + \lim_{n \to \infty} \sum_{{m}=M}^{n} \frac 1 n \indexm S {n} {m} (A)
         .
\]
Taking $M \to \infty$ leaves the residual 
$\lim_{M \to \infty} \lim_{n \to \infty} \sum_{{m}=M}^{n} \frac 1 n \indexm S {n} {m} (A)$.
\end{remark}

We conclude by characterizing the approximation introduced by truncating the first infinite series in \cref{thm:altreps}.
Recall that
$\Delta_{m} = \sdist \PF {{m}-1} {0}$.

\begin{thm}
Assume that $\FixedAtoms = \emptyset$, let $m \in \Nats$, and
let
\[
\hat \rH  \defas \Delta \NABM + \sum_{m'=1}^{m-1} \ord \rH_{m'}
\]
be the finite truncation of $\rH$, i.e., the sum of only the first $m-1$ terms of the right hand side of \cref{thedecomp}.
Conditioned on $\rH$, let $X$ be a Bernoulli process with \hazard\ measure $\rH$ and let $\hat X$ be the restriction of $X$ to the complement of the support of $\rH - \hat \rH$.
Then the expected total mass of the ordinary component of $\rH-\hat \rH$, and equivalently, an upper bound on the probability that $\hat X \neq X$, is
\[\label{dabound}
\int_{\bspace} \oooooF{ \int_{(0,1]} (1-p)^{m-1} \sdist(s, \dee p) } \, \NABM(\dee s).
\]
When $\Delta = 0$, 
\cref{dabound} simplifies to
$
 \NABM \Delta_m.
$
\end{thm}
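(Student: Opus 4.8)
The plan is to read everything off the decomposition \cref{thm:altreps} together with a single mean computation. Since $\FixedAtoms=\emptyset$ we have $\NABM=\BM$ and $\rH=\ord\rH$, so \cref{thm:altreps} gives $\rH=\Delta\NABM+\sum_{m'=1}^\infty\ord\rH_{m'}$ and hence
\[
\rH-\hat\rH=\sum_{m'=m}^\infty\ord\rH_{m'}\AS
\]
In particular the nonrandom nonatomic term $\Delta\NABM$ cancels, so $\rH-\hat\rH$ has no nonrandom nonatomic component and thus coincides with its own ordinary component; the expected total mass of that component is simply $\EE[(\rH-\hat\rH)(\bspace)]$.

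First I would compute this mass. By \cref{charofindexm}, each $\ord\rH_{m'}$ is the completely random measure $\sum_{(s,p)\in\eta_{m'}}p\,\delta_s$ associated with a Poisson process $\eta_{m'}$ of intensity $\NABM(\dee s)\,(1-p)^{m'-1}\sdist(s,\dee p)$ on $\bspace\times(0,1]$. Applying the Campbell mean formula (\cref{prop:rdmean,campbells}) to the map $(s,p)\mapsto p$ gives $\EE[\ord\rH_{m'}(\bspace)]=\int_\bspace\int_{(0,1]}p\,(1-p)^{m'-1}\,\sdist(s,\dee p)\,\NABM(\dee s)$. Summing over $m'\ge m$ and interchanging sum and integral by Tonelli, the geometric identity \cref{theidentityA} collapses $p\sum_{m'\ge m}(1-p)^{m'-1}=(1-p)^{m-1}$ on $(0,1]$, leaving exactly \cref{dabound}.

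Next I would bound $\Pr\{\hat X\neq X\}$. Set $D\defas\supp(\rH-\hat\rH)$, a countable random set. Since $\hat X$ is the restriction of $X$ to $\bspace\setminus D$, the event $\{\hat X\neq X\}$ is precisely $\{X(D)\ge 1\}$. Conditioning on the full tail decomposition $(\ord\rH_{m'})_{m'}$, which determines $\rH$, $\hat\rH$ and $D$, and using that the fresh draw $X$ is, given $\rH$, a Bernoulli process with mean $\rH$ and conditionally independent of the decomposition, Markov's inequality gives $\Pr\{X(D)\ge 1\mid(\ord\rH_{m'})\}\le\EE[X(D)\mid(\ord\rH_{m'})]=\rH(D)$. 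It then remains to identify $\rH(D)$ with $(\rH-\hat\rH)(\bspace)$, and this is the one step requiring care: I must show $\hat\rH(D)=0$ a.s. The atomic part of $\hat\rH$ is $\sum_{m'<m}\ord\rH_{m'}$, and the $\ord\rH_{m'}$ are independent (\cref{theindplawethiu}) with atom locations forming Poisson processes over the nonatomic base $\NABM$ (\cref{limitingpoissonsupport,disjointlimitingpoissonsupport}); hence distinct $\ord\rH_{m'}$ a.s.\ share no atom, so the atomic parts of $\hat\rH$ and of $\rH-\hat\rH$ have a.s.\ disjoint supports, while the continuous part $\Delta\NABM$ assigns the countable set $D$ zero mass. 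Thus $\rH(D)=(\rH-\hat\rH)(D)=(\rH-\hat\rH)(\bspace)$ a.s., and taking expectations yields $\Pr\{\hat X\neq X\}\le\EE[(\rH-\hat\rH)(\bspace)]$, the quantity \cref{dabound}. I expect this disjointness/nonatomicity bookkeeping to be the main obstacle; the remaining steps are routine.

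Finally, the simplification when $\Delta=0$ is immediate from the presence or absence of an atom of the structural distribution at $0$. Because $\sdist(s,\argdot)=\Pr_{\eppf_s}\{P_1\in\argdot\}$ carries mass $\Delta(s)=\Pr_{\eppf_s}\{P_1=0\}$ at $0$, and $(1-p)^{m-1}=1$ there, the integrals $\int_{(0,1]}(1-p)^{m-1}\sdist(s,\dee p)$ and $\int_{[0,1]}(1-p)^{m-1}\sdist(s,\dee p)=\Delta_m(s)$ differ by exactly $\Delta(s)$. When $\Delta\equiv 0$ they agree, so \cref{dabound} reduces to $\int_\bspace\Delta_m(s)\,\NABM(\dee s)=\NABM\Delta_m$.
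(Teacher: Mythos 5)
Your proof is correct and follows essentially the same route as the paper's: Markov's inequality plus the tower property reduce the claim to computing $\EE(\rH-\hat \rH)(\bspace)$, which both you and the paper obtain by summing the expected masses of the tail terms $\ord \rH_{m'}$, $m' \ge m$, of the decomposition in \cref{thm:altreps} and collapsing the sum with the geometric identity \cref{theidentityA}. The only difference is that you spell out the support-disjointness bookkeeping showing $\hat \rH$ vanishes a.s.\ on $\supp(\rH-\hat \rH)$, a step the paper's proof passes over silently.
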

\begin{proof}
By Markov's inequality and then the chain rule of conditional expectation,
\[
\Pr \theset {  (X - \hat X)(\bspace) \ge 1 } 
          \le  \EE (X - \hat X)(\bspace) = \EE (\rH - \hat \rH)(\bspace).
\]
From \cref{thm:altreps,charofindexk}, 
\[
\EE ( \rH - \hat \rH)(\bspace) 
&= \sum_{m=k}^\infty \EE \ord \rH_{m} (\bspace) 
\\&= \sum_{{m}={k}}^\infty ( \NABM \otimes \PF {{m}-1} {1} \sdist) (\bspace \times (0,1] ))
\\&= ( \NABM \otimes ({\textstyle \sum_{{m}={k}}^\infty \PF {{m}-1} {1}}) \sdist) (\bspace \times (0,1] ))
\\&= ( \NABM \otimes \PF {{k}-1} {0} \sdist) (\bspace \times (0,1] )).
\]
The final claim follows from the definition of $\Delta_m$ and the fact that $\sdist (s,\{0\}) = 0$ if and only if $\Delta(s) = 0$.
\end{proof}
\begin{remark}[posterior approximation]
\cref{introapprox} follows immediately as a corollary.  
From \cref{theindplawethiu}, we can see that $\ord \rH_{n+1}, \ord \rH_{n+2}, \dotsc$ are independent of $\cF_n = \sigma(X_{[n]})$, for every $n \in \Nats$.  It follows that these approximation bounds also hold \emph{conditionally} on $\cF_n$ for $n < m$.
\end{remark}

\newcommand{\zno}{0^n\!1}
\section{Combinatorial structure and Indian buffet proccesses}
\label{sec:combstruct}

Here we study the combinatorial structure of a \COUS\ $\process X \Nats$ induced by a homogeneous family of EPPFs $\eppf_s = \eppf$ and an \iid\ sequence $\process Y \Nats$ of Poisson processes whose nonatomic mean measure has total mass $\gamma \in (0, \infty)$.  
This will lead us to generalizations of the Indian buffet process~\citep{GG05}.  Certain special cases recover processes proposed in the literature~\citep{GGS2007,TehGor2009a,BJP12}.

To begin, note that $\CombStruct{X_1}$ is entirely characterized by $M_1$, the cardinality of $X_1$.  
Because $X_1 = Y_1$ \as, the cardinality is Poisson-distributed with mean $\gamma$.  

In order to derive the distribution of $\CombStruct{X_{[n+1]}}$,
note that $\CombStruct{X_{[n]}}$ is $\CombStruct{X_{[n+1]}}$-measurable due to the fact that
\[
M_h = M_{h0} + M_{h1} \ \as, \qquad \text{for $h \in \Histories_n$.}
\] 
Moreover, by the complete randomness of $X_{n+1}$ given $X_{[n]}$, it follows that the random variables
$M_{h1}$, for $h \in \theset {0,1}^n$,
are conditionally independent given $\CombStruct {X_n}$.
(Recall that $M_{\zno}$ counts the number of atoms appearing for the first time in $X_{n+1}$, and 
$M_{h1}$, for $h \in \Histories_n$, counts the number of atoms appearing in $X_{n+1}$ with history $h$.)
Indeed, from \cref{maindefinetti},
we know that, conditioned on $\CombStruct {X_n}$,
\begin{enumerate}

\item
$M_{\zno}$ is Poisson-distributed with mean $\gamma \, \Delta_{n+1}$; and

\item
$M_{h1}$ is binomially-distributed with $M_h$ trials and mean $M_h \hat p$ 
where 
\[
\hat p = L_n(s,\, 0, {\textstyle \frac {s(h)} n}) 
= \Pr_{\eppf} \theset { h1\inv (1)  \in \Pi_n | h\inv (1) \in \Pi_n }
\]

\end{enumerate}

All together, 
we have
\[\label{ibpcond}
&\Pr \oF { \CombStruct{ X_{[n+1]}} = \of { m_h ; h \in \Histories_{n+1} }  
          \given \CombStruct{ X_{[n]} } = \of {m_h = m_{h1}+m_{h0} ; h \in \Histories_n } }
 \\&= \frac{ (\gamma\, \Delta_{n+1})^{m_{\zno}}} {(m_{\zno})!} e^{-\gamma\, \Delta_{n+1}}
 \\&\qquad  \times \prod_{h \in \Histories_n}  {m_h \choose m_{h1}} 
              L_n(s,\, 0, {\textstyle \frac {s(h)} n})^{m_{h1}}
              \,(1- L_n(s,\, 0, {\textstyle \frac {s(h)} n}))^{m_{h0}} 
\]

\begin{thm}[$\eppf$ Indian buffet process]\label{cupibp}
\[ \label{ibppmf}
\begin{split}
\Pr \theset { \CombStruct {X_{[n]}} = \of { m_h ; h \in \Histories_n } }  
&= 
\frac 
  {\gamma^{\sum_{h \in \Histories_n} m_h } \, 
  \exp \bigl [  - \gamma \sum_{j=1}^n \Pr_{\eppf} \theset {N_{1j} = 1} %
            \bigr ] } 
  {\prod_{h \in \Histories_n} (m_h)! } \\
&\quad\qquad \times                      
   \prod_{h \in \Histories_n}
     \ooof { \Pr_{\eppf} \theset { N_{1,s(h)} = s(h) \wedge N_{1n} = s(h) } }^{m_h}.
\end{split}
\]
\end{thm}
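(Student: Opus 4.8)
The plan is to induct on $n$, feeding the one-step conditional law \cref{ibpcond} (already extracted from \cref{maindefinetti}) into the level-$n$ formula. It is cleanest to phrase everything through the quantity $f(n,k)$ of \cref{fnkdef}: by the identities in the remark following \cref{introcombstruct} we have $f(j,1)=\Pr_{\eppf}\{N_{1j}=1\}$ and $f(n,s(h))=\Pr_{\eppf}\{N_{1,s(h)}=s(h)\wedge N_{1n}=s(h)\}$, so that \cref{ibppmf} is equivalent to
\[
\Pr\{\CombStruct{X_{[n]}}=(m_h;h\in\Histories_n)\}=\gamma^{\sum_{h}m_h}\,e^{-\gamma\sum_{j=1}^{n}f(j,1)}\prod_{h\in\Histories_n}\frac{f(n,s(h))^{m_h}}{(m_h)!}.
\]
The base case $n=1$ is immediate: $\Histories_1=\{1\}$, $X_1=Y_1$ a.s., and $N_{11}=1$ surely, so $f(1,1)=1$ and the right-hand side reduces to the $\Poisson(\gamma)$ mass function of $M_1=\card{X_1}$.

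First I would record the two ingredients of \cref{ibpcond} in terms of $f$. The new-atom intensity is $\gamma\Delta_{n+1}$ with $\Delta_{n+1}=\sdist\PF{n}{0}=\int(1-p)^n\,\sdist(\dee p)=f(n+1,1)$. The per-atom success probability $\hat p$ for an atom of history $h$ (with $k\defas s(h)\ge1$) is the posterior Bernoulli mean $\EE[\rH\{s\}\mid\cF_n]$ read off the ordinary-component posterior $\postkappa^{(n)}_{s,k}=\overline{\PF{n-1}{k-1}\sdist}$ of \cref{maindefinetti}, namely
\[
\hat p=\frac{\int p^{k}(1-p)^{n-k}\,\sdist(\dee p)}{\int p^{k-1}(1-p)^{n-k}\,\sdist(\dee p)}=\frac{f(n+1,k+1)}{f(n,k)},\qquad 1-\hat p=\frac{f(n+1,k)}{f(n,k)},
\]
where the second equality uses the Pascal-type recursion $f(n,k)=f(n+1,k+1)+f(n+1,k)$, itself immediate from \cref{fnkdef} upon writing $1=p+(1-p)$ under the integral. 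Crucially, that all $m_h$ atoms of a common history share this $\hat p$ and are conditionally independent is exactly what \cref{ibpcond} provides.

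Next I would exploit that passing from level $n$ to level $n+1$ is a bijection of configurations, not a sum: writing each $h'\in\Histories_{n+1}$ as $(h,b)$ with $h\in\{0,1\}^n$ and $b\in\{0,1\}$, the level-$(n+1)$ data $(m_{h'})$ fixes the level-$n$ data through $m_h=m_{h1}+m_{h0}$ for $h\in\Histories_n$, leaving $m_{\zno}$ as the only new coordinate. Hence the left-hand side is the inductive hypothesis times the single factor \cref{ibpcond}, and it remains to match factors: the $\gamma$-powers combine to $\gamma^{\sum_{h'}m_{h'}}$; the exponentials combine via $\Delta_{n+1}=f(n+1,1)$ to $e^{-\gamma\sum_{j=1}^{n+1}f(j,1)}$; the new-atom term $(\gamma\Delta_{n+1})^{m_{\zno}}/(m_{\zno})!$ is precisely the target factor at $\zno$ since $s(\zno)=1$; and for each $h\in\Histories_n$ the binomial coefficient $\binom{m_h}{m_{h1}}$ splits $(m_h)!$ into $(m_{h1})!\,(m_{h0})!$, while
\[
f(n,s(h))^{m_{h1}+m_{h0}}\,\hat p^{\,m_{h1}}\,(1-\hat p)^{m_{h0}}=f(n+1,s(h)+1)^{m_{h1}}\,f(n+1,s(h))^{m_{h0}}
\]
by the two displayed ratios, using $s(h1)=s(h)+1$ and $s(h0)=s(h)$. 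Assembling these reproduces the level-$(n+1)$ formula and closes the induction.

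The only genuinely delicate step is the identification $\hat p=f(n+1,s(h)+1)/f(n,s(h))$ — equivalently, that conditioning on the full history $h$ with $s(h)=k$ acts on an ordinary atom exactly like conditioning on the count $S_n\{s\}=k$ and reading its posterior mean. This is where \cref{maindefinetti} (its ordinary-component posterior $\postkappa^{(n)}$) together with the recursion for $f$ carry the weight; the rest is the combinatorial bookkeeping above.
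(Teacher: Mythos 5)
Your proof is correct and follows essentially the same route as the paper: induction on $n$, with the base case being the Poisson law of $M_1$ and the inductive step obtained by multiplying the level-$n$ formula by the one-step conditional \cref{ibpcond} and matching factors. Your Pascal-type recursion $f(n,k)=f(n+1,k+1)+f(n+1,k)$ and the identification $\hat p = f(n+1,s(h)+1)/f(n,s(h))$ are just the integral reformulation (via the remark after \cref{introcombstruct}) of the conditioning identity on the events $\{N_{1,s(h)}=s(h)\wedge N_{1,n}=s(h)\}$ that the paper uses to close the induction.
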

\begin{proof}
Note that  $N_{1,1}=1$ \as\ and so \cref{ibppmf}, for $n=1$, is precisely the statement that $M_1$ is Poisson distributed with mean $\gamma$, as was to be shown.
The result for $n > 1$ then follows by induction on $n$.  In particular, multiply \cref{ibppmf,ibpcond} and then apply the following identities:
\[
&\textstyle
m_{\zno} + \sum_{h \in \Histories_n} m_h 
= m_{\zno} + \sum_{h \in \Histories_n} (m_{h1} + m_{h0})
= \sum_{h \in \Histories_{n+1}} m_h,
\\
&
\textstyle
\frac 1 { \prod_{h \in \Histories_{n+1}} (m_h)! } 
=
   \frac 1 { \prod_{h \in \Histories_n} (m_h)! } 
  \frac 1 { (m_{\zno})! } 
 \prod_{h \in \Histories_n}  {m_h \choose m_{h1}} ,
\\
&
\textstyle
\Delta_{n} \defas \Pr_{\eppf} \theset {N_{1n} = 1}, \qquad \text{(by exchangeability),}
\]
and, by exchangeability,
\[
\begin{split}
&
\textstyle
\Pr_{\eppf} \theset { N_{1,n+1} = s(h) + z \given N_{1,s(h)} = s(h) \wedge N_{1,n} = s(h) }
\\
& \textstyle 
\qquad \times  \Pr_{\eppf} \theset { N_{1,s(h)} = s(h) \wedge N_{1,n} = s(h) } 
\\
& \textstyle 
=  \Pr_{\eppf} \theset { N_{1,s(hz)} = s(hz) \wedge N_{1,n+1} = s(hz) }
\end{split}
\]
for $z \in \theset {0,1}$.
\end{proof}

\begin{remark}
Note that 
\[
\Pr_{\eppf} \theset { N_{1,k} = k \wedge N_{1n} = k }
= \int_{[0,1]} p^{n-k} (1-p)^{k-1} \sdist(\dee p),
\]
and so \cref{introcombstruct} follows immediately as a corollary.
\end{remark}

Given its derivation from an exchangeable sequence,
there is, of course, no dependence on the ordering of the underlying sequence $\process X \Nats$ in the distribution of $\CombStruct { X_{[n]}}$ and indeed this is another way of noting that the combinatorial stochastic process is itself exchangeable in the following sense:

\begin{thm}[exchangeability]
Let $\sigma$ be a permutation of $[n]$, and, for $h \in \Histories_n$, consider the composition $h \circ \sigma \in \Histories_n$ given by 
$(h \circ \sigma)(j) = h(\sigma(j))$, for $ j \le n$.  Then $\nprocess M {\Histories_n} h \equaldist \gprocess {M_{h\circ \sigma}} {h \in \Histories_n}$.
\end{thm}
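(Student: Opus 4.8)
The plan is to reduce the statement to the exchangeability of the underlying sequence $\process X \Nats$, which was established in \cref{maindefinetti}. Since an exchangeable sequence has a law invariant under permutations of finitely many coordinates, we have $(X_{\sigma(1)},\dotsc,X_{\sigma(n)}) \equaldist (X_1,\dotsc,X_n)$ for every permutation $\sigma$ of $[n]$. The counts $M_h$ are deterministic functionals of $(X_1,\dotsc,X_n)$, so the entire family $\nprocess M {\Histories_n} h$ is the image of this tuple under a single measurable map; the only real work is to check how that map intertwines with the permutation.

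First I would isolate the functional. For simple measures $x_1,\dotsc,x_n$ on $\bspace$ and $h \in \Histories_n$, set
\[
\Phi_h(x_1,\dotsc,x_n) \defas \card{\theset{ s \in \bspace \st (\forall j \le n)\ x_j\theset s = h(j) }},
\]
so that $M_h = \Phi_h(X_1,\dotsc,X_n)$ by definition of $M_h$. Measurability of each $\Phi_h$ (and hence of the finite family $(\Phi_h)_{h\in\Histories_n}$ as a whole) follows from the same reasoning already invoked for the functionals $\varphi_h$ in \cref{sec:oneparam}.

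Next I would compute the effect of permuting the arguments. Evaluating $\Phi_h$ at the permuted tuple, an atom $s$ is counted precisely when $x_{\sigma(j)}\theset s = h(j)$ for all $j \le n$; writing $i = \sigma(j)$, this is the condition $x_i\theset s = h(\sigma^{-1}(i))$ for all $i$, which is exactly the defining condition of $\Phi_{h \circ \sigma^{-1}}$. Hence
\[
\Phi_h(x_{\sigma(1)},\dotsc,x_{\sigma(n)}) = \Phi_{h\circ\sigma^{-1}}(x_1,\dotsc,x_n),
\]
an identity holding pointwise for every tuple of simple measures.

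Finally, I would combine the two ingredients. Applying the measurable family $(\Phi_h)_{h\in\Histories_n}$ to both sides of $(X_{\sigma(1)},\dotsc,X_{\sigma(n)}) \equaldist (X_1,\dotsc,X_n)$ and using the displayed intertwining relation yields $\nprocess M {\Histories_n} h \equaldist \gprocess{ M_{h\circ\sigma^{-1}} }{h \in \Histories_n}$; running this with $\sigma^{-1}$ in place of $\sigma$ (again a permutation of $[n]$) gives the stated claim. There is essentially no obstacle: the only point demanding care is the direction of composition, namely that permuting the \emph{processes} by $\sigma$ relabels the \emph{histories} by $\sigma^{-1}$, so one must either track the inverse explicitly or observe that quantifying over all $\sigma$ renders the distinction immaterial. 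As an independent confirmation, one can read off from the explicit p.m.f.\ \eqref{ibppmf} in \cref{cupibp} that its dependence on $(m_h)$ is through symmetric quantities together with factors indexed only by $s(h)$; since $s(h\circ\sigma) = s(h)$, the p.m.f.\ is manifestly invariant under $h \mapsto h\circ\sigma$.
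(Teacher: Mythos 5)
Your proposal is correct, but it proves the theorem by a genuinely different route from the paper. You reduce the claim to the exchangeability of the underlying sequence $\process X \Nats$ (from \cref{maindefinetti}), verify the pointwise intertwining identity $\Phi_h(x_{\sigma(1)},\dotsc,x_{\sigma(n)}) = \Phi_{h\circ\sigma^{-1}}(x_1,\dotsc,x_n)$, and push the distributional equality $(X_{\sigma(1)},\dotsc,X_{\sigma(n)}) \equaldist (X_1,\dotsc,X_n)$ through the measurable family $(\Phi_h)_{h\in\Histories_n}$; your bookkeeping of the inverse permutation is right, and quantifying over all $\sigma$ does render the $\sigma$ versus $\sigma^{-1}$ distinction immaterial. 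The paper acknowledges exactly this route in the sentence preceding the theorem (``this is another way of noting that the combinatorial stochastic process is itself exchangeable'') but then deliberately ``establishes the exchangeability directly'': it rewrites the p.m.f.\ \eqref{ibppmf} purely in terms of the statistics $m_{n,k} \defas \sum_{h \in \Histories_n \st s(h)=k} m_h$ together with the factor $\prod_{h \in \Histories_n}(m_h)!$, both of which are invariant under $h \mapsto h\circ\sigma$ because $s(h\circ\sigma)=s(h)$ and $h\mapsto h\circ\sigma$ permutes $\Histories_n$ --- essentially the observation you relegate to your final ``independent confirmation.'' Your argument is more structural and somewhat more general: it needs only the exchangeability of $\process X \Nats$, not the explicit p.m.f., and hence not the homogeneity, nonatomicity, and finite-mass hypotheses under which \cref{cupibp} was derived. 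The paper's computation, in exchange, is self-contained within the combinatorial section and exhibits as a by-product exactly which symmetric statistics the law of $\CombStruct{X_{[n]}}$ depends on, which feeds directly into the EFPF representation in \cref{efpfthm}.
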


The following proof establishes the exchangeability directly.

\begin{proof}
Define $m_{n,k} \defas \sum_{h \in \Histories_n \st s(h) = k} m_h$ and note that $m_{n,k} = \sum_{h \in \Histories_n \st s(h \circ \sigma) = k} m_h $ because $s(h \circ \sigma) = s(h)$.  
The right hand side of \cref{ibppmf} can be written
\[ \label{ibppmfcount}
\frac 
{\gamma^{\sum_{k=1}^n m_{n,k} } \, 
  \exp \bigl [  - \gamma \sum_{j=1}^n \Pr_{\eppf} \theset {N_{1j} = 1} \bigr ] } 
{\prod_{h \in \Histories_n} (m_h)! }
 \prod_{k=1}^n
     \ooof { \Pr_{\eppf} \theset { N_{1k} = k \wedge N_{1n} = k } }^{m_{n,k} }.
\]
Finally, note that $h \mapsto h \circ \sigma$ is a permutation of $\Histories_n$, and so 
$\prod_{h \in \Histories_n} (m_h)! = \prod_{h \in \Histories_n} (m_{h \circ \sigma})!$.
\end{proof}

\subsubsection{Alternative representations of $\CombStruct { X_{[n]}}$ and their distributions}

A convenient way to represent $\CombStruct { X_{[n]}}$ is via a binary array/matrix $W$ such that, for every $h \in \Histories_n$, there are exactly $M_h$ columns of $W$ equal to $h$ (where $h$ is thought of as a column vector) and no all-zero columns.  The rows thus correspond to the measures $X_1,\dotsc,X_n$, and the columns to the pattern of sharing for some particular atom.  Note that it is possible that $W$ has zero columns, which corresponds to the case when $X_1=\dotsb=X_n=0$.

In order to determine a distribution over arrays, we must specify the ordering of the columns.
Griffiths and Ghahramani~\citep{GG05} developed the IBP using array representations and, doing so, introduced a canonical \defn{left-ordered form},
which can be defined as follows:  

\newcommand{\hle}{\prec}
\newcommand{\rhst}{H}
Write $\hle$ for the total order on $\Histories_n$ such that $g \hle h$ if and only if, for some $m \in [n]$ and all $i \le m$, we have $g(i)=h(i)$ and $g(m) > h(m)$.  
(That is, $\hle$ is the lexicographic ordering except that $1 \hle 0$.
As an illustration, $(1,1) \hle (1,0) \hle (0,1)$.)
The array $W$ is in left-order form if adjacent columns are either equal or ordered according to $\hle$.

More precisely, 
let $\rcard \defas \sum_{h \in \Histories_n} M_h$ denote the number of atoms among $X_1,\dotsc,X_n$,
and define 
\[
\rhst(j) \defas \sup \, \theset { h \in \Histories_n \st \textstyle \sum_{g \hle h} M_{g} < j}.
\]
(If $M_{(1,1)} = 0$, $M_{(1,0)}=3$, and $M_{(0,1)}=1$, then 
$\rcard =4$, 
$\rhst\inv (1,0) = \theset {1,2,3}$ and $\rhst\inv (0,1)=\theset {4,5,\dotsc}$ a.s.)
We may then express $W$ by
\[
W = [ \rhst(1) \dotsm \  \rhst(\rcard)  ],
\]
where each $\rhst(j)$ is viewed as a column vector.
That is $W \in \theset {0,1}^{n \times \rcard}$ \as, and 
$W_{ij} = 1$ implies that $X_i$ contains the atom labeled $j$, 
and those rows that also have a 1 in column $j$ correspond to the measures that share this atom.  

Because every feature allocation of $[n]$ corresponds with a unique binary array in left-ordered form, the probability of a realization of $W$ is precisely the probability of the (unique) realization of $\CombStruct { X_{[n]} }$ that gives rise to the left-ordered form.

Another ordering that has been studied is the \defn{uniform random labeling} \citep[][Pg.~9]{BJP13}. Informally, an array $W^*$ is labeled uniformly at random if it is equal to $W$ after a permutation of $W$'s columns which is uniformly distributed among all permutations of $[\rcard]$.
More carefully, let $U_1,U_2,\dotsc$ be an \iid\ sequence of uniformly-distributed random variables independent from $\CombStruct{X_{[n]}}$.  Associate with column $j$ of $W$ the label $U_j$ and then let $W^*$ be the array obtained by sorting the columns of $W$ in the \as\ unique increasing order of their labels.

Note that the number of distinct ways of ordering the $\rcard$ columns of $W$ is 
\[\label{combfactor}
\frac {(\rcard)!}{\prod_{h \in \Histories_n} (M_h)!}, 
\]
where the denominator arises from the fact that, for each column equal to $h$, there are $M_h$ indistinguishable copies.  This leads immediately to the following result:

\newcommand{\efpf}{\pi^*}
\begin{thm}\label{efpfthm}
Let $w \in \theset {0,1}^{n \times k} $ be a binary matrix with $k \ge 0$ nonzero columns and $n$ rows, and, for every $j \le k$, let $s_j \defas \sum_{i=1}^n w_{ij}$ be the sum of column $j$.
Then
\[\label{theefpf}
\Pr \theset {W^* = w }  
   &= \efpf(n; s_1,\dotsc,s_k) \\
   &\defas  \frac { \gamma^{k} }{k! } \, \exp \ooooF {  - \gamma \sum_{i=1}^n \Pr_{\eppf} \theset {N_{1j} = 1} }
         \prod_{j=1}^k  \Pr_{\eppf} \theset { N_{1s_j} = s_j  \wedge N_{1n} = s_j },
\]
where $\efpf(n;\argdot)$
is a symmetric function on $[n]^k$ for every $n \in \Nats$ and $k \in \NNInts$.
\end{thm}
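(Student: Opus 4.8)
The plan is to condition on the combinatorial structure $\CombStruct{X_{[n]}}$ and combine \cref{cupibp} with the counting identity \cref{combfactor}. First I would translate the target matrix $w$ into a feature allocation: for each $h \in \Histories_n$ let $m_h$ be the number of columns of $w$ equal to $h$ (viewed as a column vector), so that $\sum_{h \in \Histories_n} m_h = k$ and the multiset $\theset{s_1,\dotsc,s_k}$ of column sums is exactly the multiset containing $s(h)$ with multiplicity $m_h$. By construction $W^*$ has the same multiset of columns as the left-ordered form $W$ determined by $\CombStruct{X_{[n]}}$, so the event $\theset{W^* = w}$ forces $\CombStruct{X_{[n]}} = (m_h \st h \in \Histories_n)$; conditionally on this, it additionally requires that the uniform random labeling sort the columns into the specific order exhibited by $w$.

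The key step is the conditional probability of that sort. Conditioned on $\CombStruct{X_{[n]}} = (m_h \st h \in \Histories_n)$, the array $W$ is fixed and $W^*$ is a uniformly random rearrangement of its $k$ columns: the \iid\ uniform labels induce a uniformly random permutation of $[k]$, and two label-permutations yield the same matrix exactly when they differ by a rearrangement within the groups of equal columns. By \cref{combfactor} the number of distinct rearrangements is $k!/\prod_{h} (m_h)!$, each arising from precisely $\prod_{h} (m_h)!$ of the $k!$ equally likely label-permutations. Hence each distinct rearrangement, and in particular $w$, has conditional probability
\[
\Pr \theset{ W^* = w \given \CombStruct{X_{[n]}} = (m_h \st h \in \Histories_n) } = \frac{\prod_{h \in \Histories_n} (m_h)!}{k!}.
\]

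Finally I would multiply this by $\Pr\theset{\CombStruct{X_{[n]}} = (m_h \st h \in \Histories_n)}$ from \cref{cupibp}. The factor $\prod_h (m_h)!$ cancels against the denominator in \cref{ibppmf}, and using $\sum_h m_h = k$ one is left with
\[
\frac{\gamma^{k}}{k!}\, \exp \ooooF{ -\gamma \sum_{j=1}^n \Pr_{\eppf} \theset{N_{1j}=1} } \prod_{h \in \Histories_n} \ooof{ \Pr_{\eppf} \theset{ N_{1,s(h)} = s(h) \wedge N_{1n} = s(h) } }^{m_h} .
\]
Regrouping the last product over the $k$ columns of $w$, with each column $j$ of type $h$ contributing the factor for $s_j = s(h)$, reproduces $\efpf(n; s_1,\dotsc,s_k)$ as stated. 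Symmetry is then immediate, since the expression depends on $(s_1,\dotsc,s_k)$ only through the product $\prod_{j=1}^k \Pr_{\eppf}\theset{N_{1,s_j}=s_j \wedge N_{1n}=s_j}$ of single-argument factors, which is invariant under permutations of its arguments.

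The only genuinely delicate point is the uniform-labeling count: one must correctly account for the indistinguishability of equal columns so that the conditional probability is $\prod_h (m_h)!/k!$ rather than $1/k!$, and check that $w$ actually realizes the multiset fixed by $(m_h \st h \in \Histories_n)$ (otherwise both sides vanish, including the degenerate case $k=0$). Everything else is bookkeeping with \cref{ibppmf}.
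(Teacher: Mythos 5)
Your proposal is correct and follows essentially the same route as the paper: the paper's proof is simply ``divide \cref{ibppmf} by \cref{combfactor},'' which is exactly your conditional argument that, given $\CombStruct{X_{[n]}}$, each of the $k!/\prod_h (m_h)!$ distinct column orderings of $W$ is equally likely under the uniform random labeling. Your write-up just makes explicit the cancellation of $\prod_h (m_h)!$ and the regrouping of the product over columns, which the paper leaves implicit.
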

\begin{proof}
The symmetry of $\efpf(n;\argdot)$ is manifest.  The result follows from dividing \cref{ibppmf} by \cref{combfactor} and then the definition of $s_j$.
\end{proof}

In the language of \citep{BJP13}, the functions $\efpf$ is an \defn{exchangeable feature probability function} or EFPF, which plays the role for exchangeable feature allocations analogous to that played by EPPFs for exchangeable partitions.
\cref{efpfthm} implies that every EPPF $\eppf$ induces an EFPF $\efpf$, via the distribution of $P_1$ induced by $\eppf$, which characterizes the combinatorial structure of a homogeneous continuum of urns with EPPF $\eppf$ and a nonatomic \hazard\ measure.

\section{Example: a continuum-of-two-parameter-urns scheme}
\label{sec:twoparam}

Teh and G\"or\"ur \citep{TehGor2009a} describe a three-parameter generalization of the IBP that exhibits power-law behavior by introducing a discount parameter that was understood to play a role similar to that of the discount parameter in 
the two-parameter Hoppe urn scheme \citep{MR515721} and its underlying combinatorial stochastic process, 
the two-parameter Chinese restaurant process (CRP)~\citep{MR1481784}.
The three-parameter generalization is shown to correspond to the combinatorial structure of an exchangeable sequence of Bernoulli processes directed by a class of random measures that Teh and G\"or\"ur called \defn{stable beta processes}.  Broderick, Jordan, and Pitman \citep{BJP12} study the same process and establish a number of asymptotic results characterizing the rate of growth of features, showing that they have power laws.

As we will see, the similarity between the combinatorial structure of the three-parameter IBP and the two-parameter CRP reflects a deeper connection: the three-parameter IBP
can be shown to correspond with the combinatorial structure of a \COUS\ $\process X \Nats$ induced by  the EPPF corresponding to the two-parameter CRP and a nonatomic mean measure.  By specializing the results of \cref{sec:cup}, we will make this connection precise.

\subsection{Two-parameter Chinese restaurant process}

\newcommand{\tpeppf}{\varpi} %
A well-studied EPPF is that corresponding with the two-parameter CRP.  In particular, consider the function $\tpeppf$ on $\Nats^*$ given by
\[
\tpeppf(n_1,\dotsc,n_k) = 
\frac {[\conc + \disc]_{k-1;\disc}}
        {[\conc+1]_{n-1}}
  \prod_{i=1}^k [1-\disc]_{n_i-1}
\]
where $\conc$ and $\disc$ satisfy
\[
\text{$0 \le \disc < 1$ and $\conc > -\disc$}
\]
or
\[
\text{$\disc = - k < 0$ and $\conc = m k$ for some $m = 2,3,\dotsc$ and $k > 0$;}
\] 
and 
\[
[x]_{m;a} \defas \begin{cases}
1 & \text{for $m=0$}, \\
x(x+a)\dotsm(x+(m-1)a)& \text{for $m\in \Nats$,}
\end{cases}
\]
and $[x]_m \defas [x]_{m;1}$.
The EPPF corresponding to the (one-parameter) CRP, which we introduced earlier in \cref{rem:CFopus}, is obtained by taking $\disc \downto 0$.

Let $\process Z \Nats$ be a $\tpeppf$-scheme with parameters $\conc$ and $\disc$ and assume $Z_1 \dist \Uniform$.  
$\process Z \Nats$ is also known as a two-parameter Hoppe urn.
The conditional distribution of $Z_{n+1}$ given $Z_{[n]}$ is
\[\label{tphu}
\frac {\conc + \nUni_n \cdot \disc} {\conc + n}  \, \Uniform
+
\sum_{j=1}^{\nUni_n} 
\frac {N_{jn} - \disc} {\conc + n} \, \delta_{\tilde Z_j} \,,
\]
where $\nUni_n$, $N_{jn}$, and ${\tilde Z_j}$ are defined as in \cref{sec:esp}.

\subsection{Directing random \hazard\ measure}

Let $\process X \Nats$ be a \COUS\ with \hazard\ measure $\BM$ and 
EPPF $\tpeppf$ with parameters $\conc$ and $\disc$.  
(One could also consider allowing $\conc$ and $\disc$ to vary across the space in a measurable way as in \cref{rem:CFopus}, but we will focus on the homogeneous case.)
It follows from~\cref{maindefinetti}
that there exists a random \hazard\ measure $\rH$ directing $\process X \Nats$, and that the law of $\rH$ is completely determined by $\tpeppf$ and $\BM$.  We now proceed to characterize $\rH$.

\subsubsection{Nonrandom component}

We have $\Pr_{\tpeppf} \theset { \sum_{i=1}^\infty P_i = 1 } = 1$, and so $\Delta = 0$. Therefore $\rH$ is \as\ purely atomic.

\subsubsection{Ordinary component}

We know that the distribution of the ordinary component of $\rH$ is determined by $\NABM$ and $\sdist$, i.e., the distribution of the \as\ limiting frequency $P_1$ of the first token.
Under the two-parameter Chinese restaurant process, it is known that the $P_1$ is beta-distributed with concentration $\conc+1$ and mean
$
\frac { 1 - \disc } {\conc + 1},
$
and thus $\sdist$ is absolutely continuous with respect to Lebesgue measure with probability density
\[
p \mapsto \frac{ \Gamma(\conc+1) } { \Gamma(1-\disc)\, \Gamma(\conc + \disc) } \, p^{-\disc} (1-p)^{\conc+\disc-1}.
\]
It follows that the intensity of the ordinary component of $\rH$ is 
\[\label{twoparamord}
(\dee s, \dee p) \mapsto \NABM(\dee s) \, \frac{ \Gamma(\conc+1) } { \Gamma(1-\disc)\, \Gamma(\conc + \disc) } \, p^{-1-\disc} (1-p)^{\conc+\disc-1} \dee p.
\]
Thus the ordinary component of $\rH$ is a so-called \defn{stable beta process}, as defined by \TG~\citep{TehGor2009a}.  
Note that when $\disc=0$, we recover the ordinary component of a beta process, as expected from \cref{rem:CFopus}. 
\TG\ show that a stable beta process underlies a three-parameter IBP scheme, which we will derive from the \COUS\ perspective below.
The connection with the two-parameter CRP is now manifest.

\subsubsection{The \fixedcomp\ component}

If $\BM$ is nonatomic, then the directing \hazard\ measure $\rH$ is composed of only an ordinary component.  
The continuum of urns perspective, however, also characterizes $\rH$ when $\BM$ has atoms, which will be seen to be useful when we define a \defn{hierarchical stable beta process}.  
Recall that, for an atom $s \in \FixedAtoms$ of $\BM$ such that $\BM \theset s = q$, we have
\[\label{kerneldefsbp}
\rH\theset s \dist \bkernel_{0,0}(q, \argdot) = \Pr_{\tpeppf} \theset { Q_q \in \argdot }.
\]
Recall that $Q_q = \drm [0,q]$.
When $\disc=0$, we know that $\drm$ is a Dirichlet process and so 
\[
(\drm[0,q],\, \drm(q,1])
\]
is Dirichlet-distributed with concentration $\conc$ and mean vector $(q,1-q)$, and thus
\[
\rH \theset s \dist \Beta(\conc\, \BM \theset s,\conc\,(1- \BM \theset s)),
\]
which agrees with the definition of the beta process~\citep{MR1062708,Thibaux2007}.

For a general two-parameter CRP, we cannot say as much.  When $\disc = -k < 0$ and $\conc>k$ is an integer multiple $m$ of $k$, it is understood that $\drm$ will be supported on a finite \iid\ set $\theset{\tilde Z_1,\dotsc,\tilde Z_m}$ with the probability mass symmetrically Dirichlet-distributed \NA{with concentration $k\inv$.}
Letting $M=\sum_{j=1}^m \delta_{\tilde Z_j} [0,q]$, we have
\[
\drm [0,q] \given M \dist \Beta( M k, (m - M) k),
\]
where $M$ is binomially-distributed with $m$ trials and mean $mq$. 
Unlike the case when $\disc\ge 0$, there is positive probability that $\rH\theset s \in \theset {0,1}$, 
and so the distribution of $\rH\theset s$ is not even absolutely continuous, 
although it is a mixture of beta distributions, and hence absolutely continuous on the event $\theset { 0 < \rH\theset s < 1 }$.

When $\disc>0$ and $\conc > -\disc$, the distribution of $\rH\theset s$ can be simulated exactly to arbitrary accuracy using the stick-breaking characterization of $\process P \Nats$.  In particular, there is a collection of independent random variables $\process W \Nats$ with $W_j \dist \Beta(1-\disc,\conc+j \,\disc)$ such that
\[
P_j = \left [ \, \prod_{i=1}^{j-1} (1-W_i) \right ] W_j \ \ \as
\]
By the definition of $Q_q$, we know that there exists an \iid\ process $\process T \Nats$ of Bernoulli random variables with mean $q$, independent of $\process W \Nats$, such that $Q_q = \sum_{i=1}^\infty P_i T_i$ a.s.
For any $\varepsilon> 0$, we can truncate this sum at a finite level and compute an approximation $\hat Q_q$ such that $|\hat Q_q - Q_q | < \varepsilon$ a.s.  By including additional terms, which we can compute as needed, this approximation can be tightened on demand.
(The framework of computable probability theory would allow us to make more precise statements about computability.  In particular, $Q_q$ has a computable distribution, uniformly in $q \in [0,1]$, in the sense of~\citep{MR1694441}.  See~\citep{Roy2011} for more details.)

Despite the explicit sampling rule for $\rH\theset s$, there appears to be no simple expression for its distribution in terms of $\BM \theset s$, $\conc$, and $\disc$, although the work of James, Lijoi, and Pr\"unster~\citep{MR2398765} on the distribution of linear functionals of $\drm$ provides an approach to this problem.  For example, using \citep[][Thm.~2.1]{MR2398765}, one can identify situations where the distribution of $\rH \theset s$ is absolutely continuous with respect to Lebesgue measure.

\subsection{Conditional law}

\cref{maindefinetti} characterizes the conditional law of $\rH$ given $X_{[n]}$ in terms of the kernel $\bkernel$.  The distributions $\bkernel_{n,k}(p,\argdot)$, for $p > 0$,
are simulable along the lines described above, and these simulations can be used to produce MCMC algorithms for more complicated computations.  On the other hand, it is straightforward to show that
\[\label{condlawoftwop}
\bkernel_{n,k}(0, \argdot) = \Beta( k - \disc,\, n - k + \conc + \disc),
\]
which is the conditional distribution of the mass of an atom of $\rH$ appearing $k$ times among the ordinary components of $X_{[n]}$.

\subsection{Connection with the three-parameter IBP}

Assume that $\BM$ is nonatomic, and let $\gamma \defas \BM(\bspace) > 0$ denote the total mass of the \hazard\ measure.
Because $\BM$ is nonatomic, $\rH$ has only an ordinary component, which we know to be that of a stable beta process, and so, we may conclude from the work of Teh and G\"or\"ur that the combinatorial structure of $\process X \Nats$ must be that of the three-parameter IBP.  

Regardless, it is instructive to revisit the probability mass function of the combinatorial structure given by \cref{cupibp} in this special case, as we see
that it depends on the EPPF only through the probabilities
$\Pr_{\tpeppf} \theset { N_{1k} = k \wedge N_{1n} = k }$, for $1 \le k \le n \in \Nats$.

By exchangeability, $\Pr_{\tpeppf} \theset { N_{1n} = 1} = \Pr_{\tpeppf} \theset { \nUni_n > \nUni_{n-1} }$, and so both are equal to the probability of the event that a new token appears on the $(n+1)$-st iteration of the two-parameter Hoppe urn (equivalently, a new table being allocated in a two-parameter Chinese restaurant process). 
Recall that
$
\Delta_n 
\defas \Pr_{\tpeppf} \theset{  \nUni_{n+1} > \nUni_n }.
$
We have 
\[
\Delta_n =  \EE_{\tpeppf} \of{ 
\frac {\tpeppf(\nmul_{n}^{+(\nUni_n+1)})}
        {\tpeppf(\nmul_n)}
   }
 = \EE_{\tpeppf} \of { \frac {\conc + \nUni_{n}\cdot \disc} {\conc + n} }.
\]
Conditioning on $K_n$ and then averaging, we have
\[
\begin{split}
\Delta_{n+1} 
&= \EE_{\tpeppf} \Bigl[
\frac {\conc + \nUni_{n}\cdot \disc} {\conc + n}  \cdot  
\frac {\conc + (\nUni_{n}+1)\cdot \disc} {\conc + n + 1}        
\\
&\qquad\qquad+ \left(1-\frac {\conc + \nUni_{n}\cdot \disc} {\conc + n} \right)\cdot
\frac {\conc + \nUni_{n}\cdot \disc} {\conc + n + 1}     
\Bigr]  %
= \Delta_{n} \cdot \frac {\conc + \disc + n} {\conc + 1 + n}, 
\end{split}
\]
and thus
\[\label{newcust}
\Delta_{n} 
= \frac{[\conc+\disc]_{n}}{[\conc+1]_{n}} 
= \frac{\Gamma(\conc+1)\,\Gamma(n+\conc+\disc)}{\Gamma(n+\conc+1)\,\Gamma(\conc+\disc)} \ .
\]
It follows that the number of new features appearing at stage $n+1$ is $\gamma \, \Delta_n$.
Teh and G\"or\"ur \citep{TehGor2009a} derived the distribution on the number of new features (and in particular \cref{newcust}) from \cref{twoparamord} via the calculus of completely random measures, but the connection with the combinatorics of an underlying two-parameter model was not made.

By exchangeability,
the probability 
$\Pr_{\tpeppf} \theset { N_{1k} = k \wedge N_{1n} = k }$
is also that of the event that a two-parameter Hoppe urn admits a new token on the $(n-k+1)$-st iteration, and then admits $k-1$ additional copies of this token in a row.
Admitting a new token occurs with probability $C_{n-k}$, and admitting this new token $k-1$ additional times occurs with probability
\[\label{eq:staying}
\frac {1-\disc} {\conc + n-k+1} \dotsm \frac {k-1-\disc} {\conc + n-1} 
= \frac { \Gamma(k-\disc)} { \Gamma(1-\disc)}
   \frac { \Gamma(\conc+n-k+1) } { \Gamma(\conc+n) } ,
\]
and so 
\[
\Pr_{\tpeppf} \theset { N_{1k} = k \wedge N_{1n} = k }
&=
\frac { \Gamma(k-\disc)} { \Gamma(1-\disc)}
\frac { \Gamma(\conc+1) } { \Gamma(\conc+n) } 
\frac {\Gamma(n-k+\conc+\disc)}{\Gamma(\conc+\disc)} \ ,  
\]
which, as expected, leads to a probability mass function that agrees with~\citep[][Eq.~(10)]{TehGor2009a}. 
The terms appearing in \cref{eq:staying} are connected to the \COUS\ by noting that, informally speaking,
conditioned on $X_1 \theset s = 1$, the probability that $X_{n+1} \theset s = 1$ is
\[\label{oldcust}
\frac { N_{1n} - \disc } {\conc + n}
= \frac { \Bigl ( \sum_{j \le n} X_j \theset{s} \Bigr) - \disc} {\conc + n} 
\]
By exchangeability, the right hand side of \cref{oldcust} is also the probability conditioned on $X_j \theset s = 1$ for some $j \in [n]$, and so governs the probability of an element recurring given that it has already appeared.

To summarize, we have:
\begin{thm}[combinatorial structure of two-parameter urn scheme]\label{ibptwo}
Let $\process X \Nats$ be a \COUS\ with nonatomic \hazard\ measure $\NABM$ and EPPF $\tpeppf$.  
Then $\CombStruct { \process X \Nats }$ is a three-parameter IBP with mass parameter $\gamma \defas \NABM(\bspace)$, concentration parameter $\conc$ and discount parameter $\disc$.
\end{thm}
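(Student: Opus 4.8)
The plan is to invoke the general formula for the combinatorial structure of a homogeneous \COUS\ established in \cref{cupibp} and to specialize it to the two-parameter EPPF $\tpeppf$. Since $\BM=\NABM$ is nonatomic, there are no fixed atoms, so \cref{cupibp} applies directly and shows that $\Pr\theset{\CombStruct{\process X \Nats}\text{ on }[n]\text{ equals }(m_h)}$ depends on the EPPF only through the two families of probabilities $\Pr_{\tpeppf}\theset{N_{1j}=1}$ and $\Pr_{\tpeppf}\theset{N_{1k}=k \wedge N_{1n}=k}$. First I would record that $\Delta=\Pr_{\tpeppf}\theset{P_1=0}=0$, as established in the computation of the \nonrandomcomp\ component above, so that $\rH$ is \as\ purely atomic and there is no dust, consistent with the feature-allocation interpretation of the IBP.

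The bulk of the argument is the gamma-function bookkeeping already carried out in this section. I would substitute \eqref{newcust}, namely $\Pr_{\tpeppf}\theset{N_{1j}=1}=\Delta_{j-1}=[\conc+\disc]_{j-1}/[\conc+1]_{j-1}$, into the exponential factor of \cref{cupibp}, and then substitute the closed form derived just after \eqref{eq:staying},
\[
\Pr_{\tpeppf}\theset{N_{1k}=k \wedge N_{1n}=k}
=\frac{\Gamma(k-\disc)}{\Gamma(1-\disc)}\,\frac{\Gamma(\conc+1)}{\Gamma(\conc+n)}\,\frac{\Gamma(n-k+\conc+\disc)}{\Gamma(\conc+\disc)},
\]
into each factor of the product over histories $h$ with $s(h)=k$. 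Collecting terms then yields an explicit p.m.f.\ in $(m_h)$.

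The only step requiring genuine care is the final identification with the published three-parameter IBP. \TG~\citep[][Eq.~(10)]{TehGor2009a} write their p.m.f.\ for the left-ordered binary array, with its own combinatorial prefactor, whereas \cref{cupibp} counts the unordered history-multiplicities $(m_h)$. I would reconcile the two using \cref{efpfthm}, which records precisely the factor \eqref{combfactor} relating the history-multiplicity p.m.f.\ to the uniformly-labelled array p.m.f.; once the prefactors are aligned, matching the surviving gamma functions is routine. I expect this translation between counting conventions, rather than any probabilistic content, to be the main (and only mild) obstacle.

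Alternatively, the theorem follows at once without the explicit p.m.f.: \cref{maindefinetti} together with the computation of the ordinary component shows that $\process X \Nats$ is an exchangeable sequence of Bernoulli processes directed by a stable beta process with L\'evy intensity \eqref{twoparamord}, and \TG~\citep{TehGor2009a} proved that the combinatorial structure of such a sequence is exactly the three-parameter IBP with the stated parameters. I would present the direct computation as the main argument, since it exhibits the parameter correspondence explicitly and is self-contained, and record the stable-beta-process route as a cross-check.
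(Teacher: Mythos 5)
Your proposal is correct and follows essentially the same path as the paper: the paper also specializes \cref{cupibp} to $\tpeppf$, derives $\Delta_n = [\conc+\disc]_n/[\conc+1]_n$ and the closed form for $\Pr_{\tpeppf}\theset{N_{1k}=k \wedge N_{1n}=k}$ via \cref{eq:staying}, matches the result against Eq.~(10) of Teh and G\"or\"ur, and also records the shortcut through the stable-beta-process identification of the ordinary component. The only difference is emphasis (the paper leads with the stable-beta route and treats the explicit gamma-function computation as instructive confirmation, while you invert that order), which is immaterial.
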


\subsection{Hierarchical stable beta processes}
\label{ssec:hsbp}

It is worth mentioning that Teh and G\"or\"ur do not explicitly
propose a definition 
for the \fixedcomp\ component of a stable beta process, 
although they did establish the beta law \cref{condlawoftwop}.
From a conjugacy perspective, 
it might then seem natural to consider a \fixedcomp\ component governed by a kernel
of the form
\[\label{betakernel}
q \mapsto  \Beta( f(q,\conc,\disc), g(q,\conc,\disc) ),
\]
for suitable functions $f,g > 0$.
In an article on beta negative binomial processes, 
Broderick, Mackey, Paisley, and Jordan~\citep{BJP11} make such a proposal, taking
\[
f(q,\conc,\disc) \defas \conc \, \gamma \, q - \disc > 0 \quad\text{and}\quad g(q,\conc,\disc) \defas \conc \, (1-\gamma\, q) + \disc > 0
\]
for some $\gamma > 0$. 
Except in a trivial case, no such kernel---even one of the general form given in \cref{betakernel}---corresponds with a \COUS\ with a stable beta process ordinary component.

\begin{thm}
Consider a \COUS\ with a kernel of the form given in \cref{betakernel} for some functions $f,g > 0$.
Then the ordinary component is given by \cref{twoparamord} 
only if $\disc = 0$, i.e., 
only if $\rH$ is a beta process.
\end{thm}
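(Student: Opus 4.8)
The plan is to exploit the fact that in a \COUS\ both the \fixedcomp\ kernel and the ordinary component are governed by a single underlying EPPF, so that the postulated Beta form of the \fixedcomp\ kernel forces a correspondingly constrained structural distribution $\sdist$, which in turn dictates the ordinary component through \cref{twoparamord}. Concretely, by \cref{maindefinetti} the mass $\rH\theset s$ of a fixed atom with $\BM\theset s = q$ has law $\thekernel(q,\argdot)$, so the hypothesis \cref{betakernel} amounts to the statement that $\thekernel(q,\argdot) = \Beta(f(q),g(q))$ for every $q \in (0,1]$. First I would record this identification, and then read off $\sdist$ from the small-$q$ behavior of this kernel.

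The key step is to connect $\thekernel$ to $\sdist$ using \cref{kernellimit}, which gives $\bkernel_{1,1}(q,\argdot) \to \sdist$ weakly as $q \downto 0$. By the Bayes identity defining $\bkernel$ (the case $n=k=1$ of \cref{bkerneldef}), $\bkernel_{1,1}(q,\argdot)$ is precisely the size-biased (by $p$) version of $\thekernel(q,\argdot)$; and size-biasing a $\Beta(f,g)$ density $p^{f-1}(1-p)^{g-1}$ by the factor $p$ yields $\Beta(f+1,g)$. Hence, under \cref{betakernel},
\[
\sdist = \lim_{q\downto 0} \Beta\of{ f(q)+1,\, g(q) } \quad \text{weakly.}
\]

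Finally I would match this against the requirement that the ordinary component equal \cref{twoparamord}. Since the ordinary intensity is $\NABM(\dee s)\,p\inv\sdist(\dee p)$ (\cref{genbeta}), \cref{twoparamord} holds if and only if $\sdist = \Beta(1-\disc,\conc+\disc)$. As the limiting law is a nondegenerate Beta, weak convergence forces convergence of the shape parameters (the first two moments of a $\Beta(a,b)$ determine $(a,b)$ continuously wherever the variance is positive and the mean lies in $(0,1)$), so $f(q)+1 \to 1-\disc$ and $g(q) \to \conc+\disc$. In particular $f(q) \to -\disc$, and since $f > 0$ throughout, the limit satisfies $-\disc \ge 0$, i.e.\ $\disc \le 0$; combined with $\disc \ge 0$ in the stable regime, this gives $\disc = 0$, and the ordinary component is then that of a beta process.

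The main obstacle I anticipate is the rigorous passage from weak convergence of the Beta laws to convergence of their shape parameters: this needs the nondegeneracy of the limit $\Beta(1-\disc,\conc+\disc)$ (guaranteed since $0 \le \disc < 1$ and $\conc > -\disc$) together with moment convergence on the compact interval $[0,1]$. A secondary point to dispatch is the genuinely negative-discount case: there $Q_q = \drm[0,q]$ is a binomial mixture of Beta laws carrying atoms at $\theset{0,1}$ rather than a single Beta, so \cref{betakernel} cannot hold and that regime is excluded at the outset.
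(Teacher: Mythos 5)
Your proof is correct and follows essentially the same route as the paper's: both identify the weak limit of the size-biased kernel $p\,q\inv\,\thekernel(q,\dee p) = \Beta(f(q)+1,\,g(q))$ with $\sdist = \Beta(1-\disc,\conc+\disc)$ via \cref{kernellimit} and conclude $f(q)\to -\disc$. The only real difference is the last step: the paper forces $f(q)\to 0$ directly from $g(q)/f(q)\to\infty$ (a consequence of $\thekernel(q,\argdot)\to\delta_0$), which disposes of the $\disc<0$ case as well, whereas your route relies on the stable-regime convention $\disc\ge 0$ and on a closing remark about $Q_q$ that implicitly assumes the underlying EPPF is the two-parameter CRP, which is not part of the hypothesis.
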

\begin{proof}
Let $h(q,\conc,\disc) \defas g(q,\conc,\disc) / f(q,\conc,\disc)$ and let $\kernel'$ denote the kernel given by \cref{betakernel}.
We have $\lim_{q \downto 0} \kernel'(q) = \delta_0$, which implies that $\lim_{q \downto 0} h(q,\conc,\disc) = \infty$.  On the other hand, we have that the law $p\, q\inv\, \kernel'(q,\dee p)$ converges weakly to $\beta(1-\disc,\conc+\disc)$ as $q\downto 0$.  This implies that $f(q,\conc,\disc) \to -\disc$ and $g(q,\conc,\disc) \to \conc + \disc - 1$.  Together, these imply that $\disc = 0$, which is then seen to be sufficient by noting that this is simply a beta process which has the form \cref{betakernel} for $f(q,\conc) = q \conc$ and $g(q,\conc) = \conc (1-q)$.
\end{proof}

\subsection{Other generalizations}

In this section we have connected 
existing work on exchangeable sequences of Bernoulli processes directed by stable beta processes
with continuum of two-parameter Hoppe urn schemes.
Another category of EPPFs that would be natural to investigate are those of so-called \defn{Gibbs-type}~\citep{MR2160320}.

\section{The continuum limit}
\label{sec:climit}

The following theorem states that the limiting distribution of the discrete models presented in the introduction is indeed that of a \COUS.

\begin{thm} 
Assume $\eppf$ is continuous on $\bspace$.
Let $\BM^1,\BM^2,\dotsc$ be a sequence of purely-atomic \hazard\ measures strongly converging to $\BM$, i.e., for every $A \in \bsa$, 
\[
\BM^m (A) \to \BM(A) \quad\text{as}\quad m \to \infty.
\]
For every $m$, let $\process {X^m} \Nats$ be a \COUS\ with \hazard\ measure $\BM^m$, and  
let $\process X \Nats$ be a \COUS\ with \hazard\ measure $\BM$.
Then $\process {X^m} \Nats$ converges in distribution to $\process X \Nats$ as $m \to \infty$.
\end{thm}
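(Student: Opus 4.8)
The plan is to prove convergence of the finite-dimensional distributions: since $\process X \Nats$ is a sequence of point processes on an lcscH space, $\process{X^m}\Nats$ converges in distribution to $\process X \Nats$ precisely when, for every fixed $n$, the vector $(X^m_1,\dotsc,X^m_n)$ converges in distribution to $(X_1,\dotsc,X_n)$ in $\Measures(\bspace)^n$. By the Laplace-functional continuity theorem for random measures (\citep[][Thm.~16.16]{FMP2}, already used above), it is enough to show that for every finite family $f_1,\dotsc,f_n$ of nonnegative continuous functions of compact support,
\[
\EE \exp\of{ -\textstyle\sum_{j\le n} X^m_j f_j } \to \EE \exp\of{ -\textstyle\sum_{j\le n} X_j f_j }, \qquad m \to \infty .
\]
Both sides are given in closed form by \cref{laplacefuncofjoint}, so the problem becomes purely analytic.

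The next step is to set up the linearization that drives the limit. Because each $\BM^m$ is purely atomic, its nonatomic part vanishes and the log-Laplace functional of \cref{laplacefuncofjoint} collapses to the atomic sum $\sum_s \psi_f(s,\BM^m\{s\})$, where $\psi_f(s,q)\defas \log\of{ \Bin^{(n)}(q,\argdot)\,\Lambda_f^*(s,\argdot) }$. Writing $G(q)\defas \Bin^{(n)}(q,\argdot)\Lambda_f^*(s,\argdot)$, one has a polynomial of degree $\le n$ with $G(0)=\Lambda_f^*(s,0)=1$ and $G'(0)=-n\,(1-\Lambda_f^*(s,1))$, so $\psi_f(s,0)=0$ and $\psi_f(s,q)=-n\,q\,(1-\Lambda_f^*(s,1)) + R_f(s,q)$. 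Since $\Lambda_f^*\in[0,1]$ forces $G(q)\ge (1-q)^n$, the remainder obeys $|R_f(s,q)|\le C_\delta\,q^2$ uniformly in $s$ on $\{q\le 1-\delta\}$ for each $\delta>0$.

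I would then treat the two pieces separately. The linear part $-n\int(1-\Lambda_f^*(s,1))\,\BM^m(\dee s)$ converges to $-n\int(1-\Lambda_f^*(\argdot,1))\,\BM$ by strong (setwise) convergence against the bounded measurable integrand $1-\Lambda_f^*(\argdot,1)\in[0,1]$, which vanishes off the compact support of the $f_j$ where $\BM^m,\BM$ are finite; splitting this integral into its nonatomic and atomic parts reproduces both the $-n\,\NABM(1-\Lambda_f^*(\argdot,1))$ term and the linearized atom terms of \cref{laplacefuncofjoint} for $\BM$. For the remainder $\sum_s R_f(s,\BM^m\{s\})$, fix a small $\epsilon<1-\delta$ and split the atoms of $\BM^m$ into those with mass $\le\epsilon$ and the finitely many heavier ones. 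The light atoms contribute at most $C_\delta\,\epsilon\sum_s\BM^m\{s\}=C_\delta\,\epsilon\,\BM^m(A)=O(\epsilon)$, as $\BM^m(A)\to\BM(A)<\infty$. Strong convergence pins the heavy atoms: if a mass-$\ge\epsilon$ atom $t_m$ of $\BM^m$ has $t_m\to t_*$, then $\BM(U)=\lim_m\BM^m(U)\ge\epsilon$ for every open $U\ni t_*$, so $t_*$ is an atom of $\BM$, and comparing $\BM^m\{t_*\}\to\BM\{t_*\}$ with the mass in shrinking neighborhoods excludes $t_m\neq t_*$; hence for large $m$ these atoms sit exactly at atoms of $\BM$ with masses converging, and continuity of $\eppf$ (hence of $s\mapsto\Lambda_f^*(s,\argdot)$) together with continuity of $q\mapsto\psi_f(s,q)$ gives $R_f(s,\BM^m\{s\})\to R_f(s,\BM\{s\})$ there. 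Letting $\epsilon\downarrow 0$ yields $\sum_s R_f(s,\BM^m\{s\})\to\sum_{s\in\FixedAtoms}R_f(s,\BM\{s\})$. Adding the two limits, the linearized atomic terms cancel against those buried in $R_f$, and what remains is exactly the right-hand side of \cref{laplacefuncofjoint} for $\BM$.

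The hard part will be this last, triangular-array step: showing that the vanishing atoms of $\BM^m$ \emph{dissolve} into the ordinary (nonatomic) component of the limit while the heavy atoms \emph{persist} as fixed atoms. This is a L\'evy--Khintchine-type continuity phenomenon, and the two ingredients that make it succeed are (i) the uniform $O(q^2)$ control of $R_f$ away from $q=1$, which makes the aggregate nonlinear effect of the small atoms negligible, and (ii) the rigidity of strong convergence, which simultaneously conserves total mass and fixes the locations of the surviving atoms; the continuity of $\eppf$ enters only to ensure the kernels $s\mapsto\Lambda_f^*(s,\argdot)$ vary continuously so that the integrals pass to the limit.
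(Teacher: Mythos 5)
Your proposal is correct in outline, but it takes a genuinely different route from the paper's proof. The paper makes the same first reductions (finite-dimensional convergence via \citep[][Thm.~4.29]{FMP2}, then finite nonatomic $\BM$ after peeling off $\FixedAtoms$ and partitioning by $\sigma$-finiteness), but then works one level up: it uses \cref{altlaplacefuncs} to express $\Pr[X_{[n]} \given R_n]$ as a kernel $\nu$ applied to the Bernoulli sums $R_n = Y_1 + \dotsb + Y_n$, asserts that $\nu$ is continuous on the space of locally finite simple measures (this is where the continuity of $\eppf$ enters), and thereby reduces everything, via \citep[][Thm.~16.24]{FMP2}, to convergence in distribution of the Poisson superpositions $R^m_n \to R_n$, which it verifies with the avoidance-function criterion of \citep[][Thm.~16.17]{FMP2}. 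You instead compute both log-Laplace functionals in closed form from \cref{laplacefuncofjoint} and pass to the limit by hand, via the Taylor expansion of $q \mapsto \log\ooof{\Bin^{(n)}(q,\argdot)\,\Lambda_f^*(s,\argdot)}$ at $q=0$ and the rigidity of setwise convergence. The trade-off: the paper's argument is short and modular but leaves the continuity of $\nu$ as an unproved claim that carries essentially the analytic content you are checking directly; your argument is self-contained and, since your integrands are only ever integrated setwise against $\BM^m$ or evaluated at the fixed locations of persistent atoms, it suggests the continuity hypothesis on $\eppf$ can be relaxed to measurability. The cost is bookkeeping you should not gloss over in a final write-up: (i) the finitely many heavy atoms need a subsequence-extraction argument (they lie in the compact union of the supports of the $f_j$, and it suffices to identify the limit along every convergent subsequence) before your ``$t_m \to t_*$'' step applies; (ii) passing the limit through the sum over the countable set $\FixedAtoms$ needs a Pratt-type dominated-convergence step using $\abs{R_f(s,q)} \le C q$ together with $\BM^m(\FixedAtoms) \to \BM(\FixedAtoms)$; and (iii) the $\epsilon \downto 0$ and $m \to \infty$ limits must be interchanged using the uniform $O(\epsilon)$ bound. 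A small simplification: since $\Lambda_f^*(s,r) \ge e^{-\sum_j \norm{f_j}_\infty}$ for every $r$, the polynomial $G$ is bounded below uniformly on all of $[0,1]$, so the quadratic remainder bound holds with a single constant and the $\delta$ can be dispensed with.
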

\begin{proof}
By \citep[][Thm.~4.29]{FMP2}, it suffices to show that $X^m_{[n]}$ converges to $X_{[n]}$ in distribution as $m \to \infty$ for every $n \in \Nats$.   Fix $n \in \Nats$.
It is straightforward to show that
$X^m_{[n]}\{s\}$ converges in distribution to $X_{[n]}\{s\}$ as $m \to \infty$ for every $s \in \FixedAtoms$.
As the measures are completely random, it therefore suffices to prove convergence  on the complement $\bspace \setminus \FixedAtoms$, and so we will assume without any loss of generality that $\FixedAtoms = \emptyset$.
Moreover, $\BM$ is $\sigma$-finite and so we can partition $\bspace$ into a countable partition $\bspace_1,\bspace_2,\dotsc$ such that $\BM(\bspace_k)$ is finite for every $k \in \Nats$.  The restrictions of $X_{[n]}, X^1_{[n]},X^2_{[n]},\dotsc$ to each subset $\bspace_k$ are independent, and so we can, without loss of generality, assume also that $\BM$ is finite.

For every $m \in \Nats$, let $\process {Y^m} \Nats$ be the \iid\ sequence of Poisson processes underlying $\process {X^m} \Nats$.  Fix $n \in \Nats$ and define $R^m_n = Y^m_1 + \dotsb + Y^m_n$.  Define $\process Y \Nats$ and $R_n$ similarly.
From \cref{altlaplacefuncs}, we know there is a probability kernel $\nu$ satisfying $\nu(R_n) = \Pr [ X_{[n]} | R_n]$ \as\ and $\nu(R^m_n) = \Pr[X^m_{[n]}| R^m_n]$  \as\ for every $m \in \Nats$.
The claim is that $\nu$ is a continuous map from the subspace $\mathcal N$ of locally-finite simple Borel measures
to the space of Borel measures on $\mathcal N^n$.
Then so is the map taking the distribution of a random element $R$ in $\mathcal N$ to the distribution of $\nu(R)$.
Therefore by \citep[][Thm.~16.24]{FMP2}, 
it suffices to show that $R^m_n$ converges to $R_n$ in distribution.  

By independence of the $Y_n$, the random measure $R_n$ is a Poisson process with intensity $n \NABM$, and so, by~\citep[][Thm.~16.17]{FMP2}, it suffices to show that
1) $\Pr \theset{ R^m_n A = 0 } \to \Pr \theset{ R_n A = 0 }$ for every $A \in \bsa$ and 2) $\limsup_n \EE(R^m_n K) \le \EE(R_n K) < \infty$ for all compact sets $K \subseteq \bspace$.  

We have 
\[
\Pr \theset{  R^m_n A = 0 }
&= \prod_{s \in \Atoms_m \cap A} \Pr \theset{ R^m_n \theset s = 0 }
= \prod_{s \in \Atoms_m \cap A} e^{- n \BM^m \theset s}
\\&= e^{- n \sum_{s \in \Atoms_m \cap A} \BM^m \theset s}
= e^{- n \BM^m (A)}
\to e^{- n \BM (A)} = \Pr \theset{ R_n A = 0 },
\]
which establishes (1).  For (2), note that
\[
\EE(R^m_n K) 
&= \sum_{s \in \Atoms_m \cap K}  \!\! \EE R^m_n \theset s 
= \sum_{s \in \Atoms_m \cap K} \!\! \BM^m \theset s  
= \BM^m K \to \BM K = \EE (R_n K) < \infty,
\]
completing the proof.
\end{proof}

\begin{remark}
The proof fails if the convergence is merely weak.  To see this, take $\BM^m \defas \frac 1 2 ( \delta_{m^{-1}} + \delta_{m^{-2}})$.  Then $\BM^m \to \delta_0$ weakly, but $R^m_1$ converges weakly to a point process concentrated on $\{0\}$ whose total mass is binomally distributed with mean $1$ and variance $\frac 1 2$.  In contrast, a Bernoulli process with mean $\delta_0$ is almost surely $\delta_0$ itself.
\end{remark}

Having established the relationship between the continuum limit described in \cref{sec:intro} and the \COUS, \cref{intromain,genbeta,superposthm,introapprox,introcombstruct} now follow as special cases from their counterparts in \cref{sec:cup}.

\scriptsize

\section*{Acknowledgments}

The author would like to thank 
Nate Ackerman,
Cameron Freer, 
Zoubin Ghahramani, 
and
Creighton Heaukulani 
for feedback on drafts.
This research was carried out in part while the author was a research fellow of Emmanuel College, Cambridge, with funding from a Newton International Fellowship through the Royal Society.

\printbibliography

\normalsize

\appendix

\section{Transfer arguments}
\label{sec:transfer}

Transfer arguments translate distributional equalities into existence claims for random variables on extensions of the underlying probability space.  The interested reader is advised to consult \citep[][Chp.~6]{FMP2}.

\begin{thm}[transfer {\citep[][Prop.~6.10]{FMP2}}]\label{transfer}
For any measurable space $S$ and Borel space $T$, let $\xi \equaldist \tilde \xi$ and $\eta$ be random elements in $S$ and $T$, respectively. Then there exists a random element $\tilde \eta$ in $T$ with $(\tilde \xi,\tilde\eta) \equaldist (\xi,\eta)$.  More precisely, there exists a measurable function $f : S \times [0,1] \to T$ such that we may take $\eta = f(\tilde \xi, \vartheta)$ whenever $\vartheta \ind \tilde\xi$ is $\Uniform$.
\end{thm}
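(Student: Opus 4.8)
The plan is to derive the statement from two standard facts about Borel spaces: the existence of a regular conditional distribution, and the representation of a probability kernel into a Borel space as a measurable randomization driven by a single uniform variable. First I would reduce to a kernel: since $T$ is Borel, the conditional law of $\eta$ given $\xi$ admits a regular version, i.e.\ there is a probability kernel $\mu$ from $S$ to $T$ with $\Pr[\eta \in \argdot \given \xi] = \mu(\xi, \argdot)$ \as. This turns the problem into one of constructing a single measurable map $f \colon S \times [0,1] \to T$ such that, for every $s \in S$, the pushforward of Lebesgue measure under $u \mapsto f(s,u)$ equals $\mu(s, \argdot)$.

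To build $f$, I would exploit the Borel structure of $T$ by fixing a measurable bijection $g \colon T \to B_0 \subseteq [0,1]$ with measurable inverse, and pushing $\mu$ forward to a kernel $\nu$ on $[0,1]$ via $\nu(s, \argdot) \defas \mu(s, g\inv(\argdot))$. Setting $F(s,x) \defas \nu(s, [0,x])$ and taking its generalized inverse $F^{\leftarrow}(s,u) \defas \inf\{x : F(s,x) \ge u\}$, the classical inverse-transform argument shows that $(s,u) \mapsto F^{\leftarrow}(s,u)$ is jointly measurable and that $F^{\leftarrow}(s, \vartheta)$ has law $\nu(s, \argdot)$ whenever $\vartheta \dist \Uniform$. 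I would then define $f(s,u) \defas g\inv(F^{\leftarrow}(s,u))$ on the full-measure event that this value lands in $B_0$, and an arbitrary fixed point of $T$ otherwise.

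With $f$ in hand, I would enlarge the probability space if necessary so as to carry $\vartheta \dist \Uniform$ with $\vartheta \ind \tilde\xi$, and set $\tilde\eta \defas f(\tilde\xi, \vartheta)$. For bounded measurable $h$ on $S \times T$, independence together with Fubini gives $\EE\, h(\tilde\xi, \tilde\eta) = \EE \int_0^1 h(\tilde\xi, f(\tilde\xi, u))\, \dee u = \EE \int_T h(\tilde\xi, t)\, \mu(\tilde\xi, \dee t)$ by the defining property of $f$. Since $s \mapsto \int_T h(s,t)\, \mu(s, \dee t)$ is measurable and $\tilde\xi \equaldist \xi$, the right-hand side equals $\EE \int_T h(\xi, t)\, \mu(\xi, \dee t) = \EE\, h(\xi, \eta)$, whence $(\tilde\xi, \tilde\eta) \equaldist (\xi, \eta)$, as required.

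The hard part is the second paragraph: producing a single jointly measurable $f$ that simultaneously randomizes every section $\mu(s, \argdot)$ from one uniform input. This is exactly where the Borel hypothesis on $T$ is indispensable---it supplies the embedding $g$ and hence the quantile representation---and there is no analogous construction for a general measurable target space.
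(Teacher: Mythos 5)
Your proof is correct and takes essentially the same route as the source on which the paper relies: the paper states this transfer theorem without proof, citing Kallenberg (Prop.~6.10), whose argument is exactly your combination of a regular conditional distribution of $\eta$ given $\xi$ (available since $T$ is Borel) with the randomization lemma, itself proved via a Borel isomorphism into $[0,1]$ and the jointly measurable quantile transform, followed by the Fubini computation identifying the joint law. Your handling of the null set where the quantile falls outside the embedded image, and your use of $\tilde\xi \equaldist \xi$ in the final step, match the standard proof, so nothing further is needed.
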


\begin{cor}[stochastic equations {\citep[][Prop.~6.11]{FMP2}}]\label{stoceqn}
Fix two Borel spaces $S$ and $T$, a measurable mapping $f : T \to S$ , and some random elements $\xi$ in $S$ and $\eta$ in $T$ with $\xi \equaldist f(\eta)$. Then there exists a random element $\tilde \eta \equaldist \eta$ in $T$ with $\xi = f(\tilde \eta)$ a.s.
\end{cor}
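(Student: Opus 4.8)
The plan is to obtain this corollary as a direct consequence of the transfer theorem (\cref{transfer}), applied to the pair $(f(\eta),\eta)$, together with the observation that a deterministic functional relationship is an almost sure event and is therefore preserved under equality in law.

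First I would select the appropriate instance of \cref{transfer}. Take the source pair to be $(f(\eta),\eta)$, a random element of $S \times T$, and take the prescribed first coordinate to be $\xi$. The hypothesis $\xi \equaldist f(\eta)$ is exactly the requirement that $\xi$ agree in distribution with the first coordinate $f(\eta)$ of the source pair, so \cref{transfer} supplies a random element $\tilde\eta$ of $T$ with
\[
(\xi,\tilde\eta) \equaldist (f(\eta),\eta).
\]
Passing to the second marginal gives $\tilde\eta \equaldist \eta$, which is the first of the two assertions.

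Next I would promote this distributional identity to the almost sure equation $\xi = f(\tilde\eta)$. Let $G \defas \theset{(s,t) \in S \times T \st s = f(t)}$. By construction the source pair $(f(\eta),\eta)$ lies in $G$ surely, so its law assigns $G$ full mass; since $(\xi,\tilde\eta)$ has the same law, it too assigns $G$ full mass, which is precisely $\xi = f(\tilde\eta)$ \as. Combined with $\tilde\eta \equaldist \eta$, this is the claim.

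The only delicate point — and the step I would flag as the main obstacle — is verifying that $G$ is measurable in the product $\sigma$-algebra, since assigning $G$ a probability presupposes this. Here I would invoke that $S$ and $T$ are Borel spaces: the map $(s,t) \mapsto (s,f(t))$ is measurable from $S \times T$ into $S \times S$, and the diagonal of $S \times S$ is measurable because $S$ embeds measurably into $\Reals$, so the diagonal is the preimage of $\theset 0$ under a measurable difference map. As $G$ is the preimage of this diagonal under $(s,t)\mapsto(s,f(t))$, it is measurable, and the argument is complete.
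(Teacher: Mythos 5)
Your proof is correct and is essentially the standard argument: the paper itself states this corollary without proof, citing Kallenberg \citep[Prop.~6.11]{FMP2}, and Kallenberg's proof is precisely your route --- apply the transfer theorem (\cref{transfer}) to the pair $(f(\eta),\eta)$ with prescribed first coordinate $\xi$, then use that the graph $\theset{(s,t) \st s = f(t)}$ carries full mass and is measurable because $S$ is Borel (so the diagonal of $S\times S$ is measurable). Your explicit handling of the measurability of the graph is the right point to flag, and your argument for it is sound.
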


\begin{lem}[conditional independence and randomization {\citep[][Prop.~6.13]{FMP2}}]\label{rand}
Let $\xi$, $\eta$, and $\zeta$ be random elements in some measurable spaces $S$, $T$, and $U$, respectively, where $S$ is Borel.  Then $\xi \ind_\eta \zeta$ iff $\xi = f(\eta,\vartheta)$ a.s.\ for some measurable functions $f : T \times [0,1] \to S$ and some $\Uniform$ random variable $\vartheta \ind (\eta, \zeta)$.
\end{lem}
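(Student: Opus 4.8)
The plan is to prove the two implications separately, handling the ``if'' direction by the elementary calculus of conditional independence and the ``only if'' direction by combining a regular-conditional-distribution argument with the transfer machinery (\cref{transfer}, \cref{stoceqn}) recorded just above. For the easy direction, suppose $\xi = f(\eta,\vartheta)$ a.s.\ with $\vartheta$ uniform and $\vartheta \ind (\eta,\zeta)$. First I would observe that unconditional independence $\vartheta \ind (\eta,\zeta)$ upgrades to conditional independence $\vartheta \ind_\eta \zeta$: conditioning on $\eta$, a coordinate of a pair from which $\vartheta$ is already independent, cannot create dependence, and this is a one-line consequence of the factorization of the relevant conditional laws. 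Since $\eta$ is trivially $\sigma(\eta)$-measurable, the pair $(\eta,\vartheta)$ also satisfies $(\eta,\vartheta) \ind_\eta \zeta$, and conditional independence is preserved under the measurable map $(\eta,\vartheta) \mapsto f(\eta,\vartheta) = \xi$. Hence $\xi \ind_\eta \zeta$.

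For the hard direction, assume $\xi \ind_\eta \zeta$. Because $S$ is Borel I would first transport $\xi$ through a Borel isomorphism and assume without loss that $S = [0,1]$. Next I would fix a regular conditional distribution, i.e.\ a probability kernel $\mu$ from $T$ to $S$ with $\Pr[\xi \in \cdot \mid \eta] = \mu(\eta,\cdot)$ a.s.; its existence uses precisely that $S$ is Borel. The essential use of the hypothesis is that $\xi \ind_\eta \zeta$ forces $\mu(\eta,\cdot)$ to serve simultaneously as a version of the conditional law of $\xi$ given the larger $\sigma$-field $\sigma(\eta,\zeta)$, that is $\Pr[\xi \in \cdot \mid \eta,\zeta] = \mu(\eta,\cdot)$ a.s. I would then invoke the standard functional representation of kernels on a Borel space: there is a jointly measurable $g : T \times [0,1] \to S$ such that, for $\vartheta$ uniform and independent of $\eta$, the map $g(t,\cdot)$ pushes Lebesgue measure forward to $\mu(t,\cdot)$ (concretely, the generalized quantile transform of $\mu(t,\cdot)$ under the identification $S = [0,1]$).

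To conclude I would pass to an extension of the probability space carrying a uniform $\vartheta \ind (\eta,\zeta)$ and set $\xi' := g(\eta,\vartheta)$. By the ``if'' direction $\xi' \ind_\eta \zeta$, and by construction $\Pr[\xi' \in \cdot \mid \eta] = \mu(\eta,\cdot)$; comparing conditional laws given $(\eta,\zeta)$, which both equal $\mu(\eta,\cdot)$, together with the common law of $(\eta,\zeta)$, yields $(\eta,\zeta,\xi') \equaldist (\eta,\zeta,\xi)$. I would then remove the extension by a transfer argument (\cref{transfer}) rather than \cref{stoceqn}, since the latter's Borel hypotheses on $T$ and $U$ are not assumed here, whereas transfer only requires the transported randomizer to take values in the Borel space $[0,1]$: taking the measurable source element $(\eta,\zeta,\xi)$ and its companion $\vartheta$, transfer produces a $\tilde\vartheta$ on the original space with $((\eta,\zeta,\xi),\tilde\vartheta) \equaldist ((\eta,\zeta,\xi'),\vartheta)$. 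Transferring the a.s.\ identity $\xi' = g(\eta,\vartheta)$ and the independence $\vartheta \ind (\eta,\zeta)$ across this equality in distribution gives $\xi = g(\eta,\tilde\vartheta)$ a.s.\ with $\tilde\vartheta$ uniform and $\tilde\vartheta \ind (\eta,\zeta)$; setting $f := g$ finishes the proof.

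The main obstacle is the functional representation of the kernel $\mu$ by a single jointly measurable $g(t,u)$, since this is exactly where the Borel hypothesis on $S$ is consumed and what makes the equivalence a genuine strengthening rather than a formal manipulation. The step that promotes $\mu(\eta,\cdot)$ from a version of $\Pr[\xi\in\cdot\mid\eta]$ to a version of $\Pr[\xi\in\cdot\mid\eta,\zeta]$ is conceptually the heart of the equivalence, but it is a short computation once the regular conditional distribution is in hand, and the final transfer step is routine once the matching joint law $(\eta,\zeta,\xi')\equaldist(\eta,\zeta,\xi)$ has been verified.
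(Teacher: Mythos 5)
The paper offers no proof of \cref{rand}: it is stated in the appendix purely as an imported result, cited verbatim from Kallenberg (Prop.~6.13 of \citep{FMP2}), so there is no in-paper argument to compare against. Your proof is a correct reconstruction of the standard argument for that result --- the easy direction via the chain rule for conditional independence and preservation under measurable maps, and the hard direction via a regular conditional distribution (using that $S$ is Borel), the jointly measurable quantile-transform representation of the kernel, and a transfer back from the extension; your choice of \cref{transfer} over \cref{stoceqn} is the right one, since only the randomizer's target $[0,1]$ needs to be Borel while $T$ and $U$ remain arbitrary. The only technicality worth flagging is that after identifying $S$ with a Borel subset $B \subseteq [0,1]$ the quantile transform $g(t,\cdot)$ may land outside $B$ on a Lebesgue-null set, which one fixes by redefining $g$ there to a fixed point of $B$; and note that transferring the a.s.\ identity $\xi' = g(\eta,\vartheta)$ across the distributional equality uses measurability of the diagonal in $S \times S$, again supplied by the Borel hypothesis.
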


\ \vfill

\end{document}